\documentclass[11pt]{article}

\usepackage[margin=1in]{geometry}
\usepackage{amsmath,amssymb,amsthm,mathtools}
\usepackage{bm}
\usepackage{graphicx}
\usepackage[hidelinks]{hyperref}
\usepackage[nameinlink,noabbrev]{cleveref}
\usepackage{enumitem}
\usepackage[round,authoryear]{natbib}

\theoremstyle{plain}
\newtheorem{theorem}{Theorem}[section]
\newtheorem{lemma}[theorem]{Lemma}

\newtheorem{corollary}[theorem]{Corollary}

\theoremstyle{definition}
\newtheorem{definition}[theorem]{Definition}
\newtheorem{remark}[theorem]{Remark}

\title{Estimation of Riemannian Quantities from Noisy Data via Density Derivatives}

\author{
Junhao Chen\thanks{\texttt{jhochenn@gmail.com}, Independent Researcher}
\and
Ruowei Li\thanks{\texttt{ruoweili@nus.edu.sg}, Department of Statistics and Data Science, National University of Singapore}
\and
Zhigang Yao\thanks{\texttt{zhigang.yao@nus.edu.sg}; \texttt{zhigang.yao@cmsa.fas.harvard.edu}. Department of Statistics and Data Science, National University of Singapore; Center of Mathematical Sciences and Applications, Harvard University}
}

\date{}

\begin{document}
\maketitle

\begin{abstract}%
    We study the recovery of geometric structure from data generated by convolving the uniform measure on a smooth compact submanifold $M\subset\mathbb{R}^D$ with ambient Gaussian noise. Our main result is that several fundamental Riemannian quantities of $M$, including tangent spaces, the intrinsic dimension, and the second fundamental form, are identifiable from derivatives of the noisy density.
    We first derive uniform small-noise expansions of the data density and its derivatives in a tubular neighborhood of $M$. These expansions show that, at the population level, tangent spaces can be recovered from the density Hessian with $O(\sigma^2)$ error, while the intrinsic dimension can be estimated consistently. We further construct estimators for the second fundamental form from density derivatives, obtaining $O(d(y,M)+\sigma)$ and $O(d(y,M)+\sigma^2)$ errors for hypersurfaces and submanifolds with arbitrary codimension.
    At the sample level, we estimate the density and its derivatives by kernel methods in the ambient space and plug them into the population constructions, yielding uniform nonparametric rates in the ambient dimension.
    Finally, we show that these density-based constructions admit a geometric interpretation through density-induced ambient metrics, linking the geometry of $M$ to ambient geodesic structure.
\end{abstract}

\noindent\textbf{Keywords:}
geometric inference, submanifold geometry, tangent space estimation, second fundamental form estimation, density-induced metric

\section{Introduction} 
A common modeling paradigm for high-dimensional data in $\mathbb{R}^D$ posits an unknown $d$-dimensional smooth submanifold $M \subset\mathbb{R}^D$ with $d \ll D$. 
In much of the literature, however, the geometry of $M$ is imposed primarily as an a priori condition, rather than treated as a collection of geometric objects to be inferred from data. This viewpoint leads to a question of statistical identifiability: what geometric structure of $M$ is recoverable from noisy observations, under what noise conditions, and at what accuracy?

One line of research, often referred to as manifold estimation or manifold fitting, studies recovery of the underlying set $M$ from discrete observations. 
The aim is to construct a $d$-dimensional submanifold $\widehat M$ that is close to $M$, while satisfying essential regularity conditions, such as smoothness, bounded curvature, and a positive lower bound on the reach; see, e.g., \citet{Genovese2012manifold, Genovese2014Ridge, Fefferman2018FittingAP, Yao2019ManifoldFU, Sober2020, Fefferman2023LargeReach, yao2023manifold, Fefferman2023fittingmanifolddatapresence}. Beyond set estimation, one may also seek to infer regularity quantities such as the reach \citep{Aamari2019reach}.

In this paper, we study a complementary problem of recovering the intrinsic Riemannian structure of $M$.
For an embedded submanifold $M \subset \mathbb{R}^D$, two fundamental extrinsic primitives are the tangent spaces $\{T_xM\}_{x\in M}$ and the second fundamental form $\{\Pi_x\}_{x\in M}$. The tangent space determines the induced metric, while $\Pi_x$ describes the normal component of the ambient derivative; together, via the Gauss formula and the Gauss equation, they determine the induced Levi--Civita connection and curvature. Accordingly, these two objects provide natural statistical targets for geometric inference.
In this paper, we study their pointwise recovery: given an observation $y$ near $M$, with projection $\pi(y)$, our goal is to construct a linear subspace and a symmetric bilinear map that approximate, respectively, the tangent space $T_{\pi(y)}M$ and the second fundamental form $\Pi_{\pi(y)}$.

A representative approach to tangent space estimation is local principal component analysis (LPCA). In the noiseless setting, \citet{Singer2011VectorDM} analyzes the LPCA covariance matrix and derives asymptotic accuracy guarantees for tangent-space estimation. In contrast, \citet{Tyagi2012} provides sufficient conditions on the neighborhood width and local sample size to guarantee a small principal angle between the PCA subspace and the true tangent space. 
Under additive noise, \citet{Kaslovsky2014NonAsymptoticAO} establishes nonasymptotic perturbation bounds for LPCA in an ambient Gaussian noise model, highlighting a trade-off between noise and curvature. Both \citet{Tyagi2012} and \citet{Kaslovsky2014NonAsymptoticAO} adopt a coordinate-based local model in which tangent coordinates are sampled uniformly and then mapped to the manifold via a local graph parametrization.
Beyond linear fitting, \citet{Cheng2016} shows that incorporating a quadratic term in the local approximation can improve the angular error from first-order to second-order, while \citet{Aamari2017NonasymptoticRF} studies joint estimation of the tangent space and higher-order local polynomial terms. In parallel with LPCA, several recent approaches exploit Laplacian structure to infer tangent directions.
\citet{Kohli2025RobustTS} estimate tangent spaces by orthogonalizing gradients of low-frequency graph-Laplacian eigenvectors. \citet{Jones2025ManifoldDG} first estimates the manifold Laplacian and then uses the carré du champ to recover the induced metric and tangent spaces.

For estimating the second fundamental form, a natural starting point is that under a suitable local parametrization, the quadratic term encodes the second fundamental form.
Along this line, \citet{Aamari2017NonasymptoticRF} fits local polynomials to a local graph parametrization, where the quadratic term provides an approximation of the second fundamental form. They further establish minimax lower bounds for estimating both the tangent space and the second fundamental form under a bounded tubular-noise model. In the noiseless case, the accuracy is governed by the Taylor remainder of the local parametrization; with additional noise, noise deviations enter the local polynomial fit as a controlled perturbation, whose effect is amplified only through the local scale.
From a dual viewpoint, since the shape operator (Weingarten map) is equivalent to the second fundamental form via the ambient metric, one may estimate curvature through normal variations. \citet{Cao2019EfficientWM} proposes a two-step estimator that first estimates tangent and normal spaces by LPCA and then fits the Weingarten map by least squares in the estimated local frame. Their asymptotic rate analysis is carried out under the assumption of exact tangent spaces and a given normal vector field.
Beyond local regression, \citet{Buet2019WeakAA} adopts a varifold perspective and introduces a weak second fundamental form defined via suitable variation operators, together with regularized approximate curvature tensors that are computable for general varifolds, including point cloud varifolds; convergence guarantees are then expressed in terms of the discrepancy between the input varifold and the target submanifold and the regularization scale.
\citet{Jones2025ManifoldDG} exploits the diffusion-geometric framework for second-order geometry, combining tangent and normal information with Hessian-level carré du champ identities to recover the second fundamental form.

In contrast to approaches based on local coordinates, bounded tubular noise, or manifold estimation, we study Gaussian-corrupted observations generated by convolving the uniform measure on a smooth compact submanifold $M\subset\mathbb{R}^D$ with ambient Gaussian noise. Our analysis shows that fundamental Riemannian quantities of $M$, including tangent spaces, the intrinsic dimension, and the second fundamental form, are identifiable from derivatives of the noisy density in a tubular neighborhood of $M$. This is achieved by deriving uniform small-noise expansions for the density and its derivatives, which then lead to population- and sample-level estimators of these geometric objects. We also develop a geometric interpretation of these constructions through density-induced ambient metrics.

Our approach is built on the small-noise structure of the noisy density. In this regime, the log-density behaves like a scaled squared distance to $M$, and its first three derivatives encode, respectively, a normal field, a spectral splitting between tangent and normal directions, and the variation of the tangential projection. This leads to plug-in geometric estimators based on derivatives of the log-density up to order three. At the sample level, we estimate these derivatives by kernel methods in $\mathbb{R}^D$ and derive convergence rates for the resulting geometric estimators.

The paper is organized as follows. Section~\ref{sec:prelim} introduces the geometric setup and preliminaries. Section~\ref{sec:density-and-derivatives} develops uniform small-noise expansions for the density and its derivatives in a tubular neighborhood. Section~\ref{sec:compute-geometry} uses these expansions to construct population-level estimators of tangent spaces, the intrinsic dimension, and the second fundamental form. Section~\ref{sec:metric-change} presents the density-induced metric interpretation. Section~\ref{sec:sample-level-estimator} establishes sample-level guarantees via kernel estimation of density derivatives. Section~\ref{sec:numerical} reports numerical studies.

\begin{figure}[!t]
    \centering
    \includegraphics[width=0.9\linewidth]{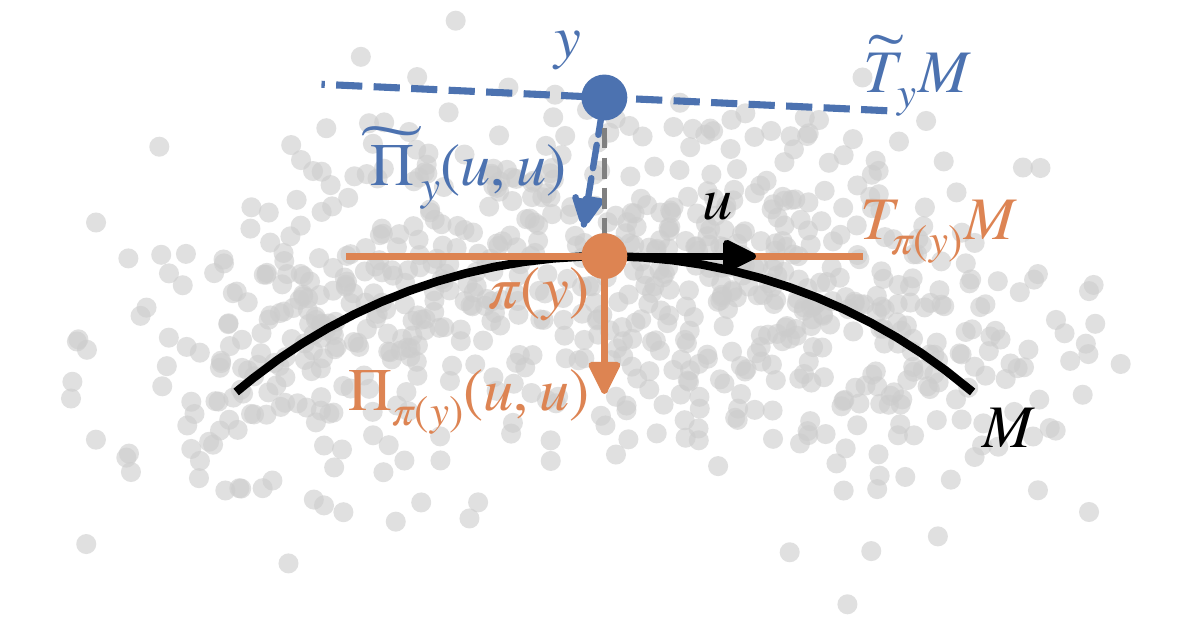}
    \caption{Illustration of Riemannian geometry estimation from noisy data. The underlying submanifold $M$ (black) is observed through noisy samples (gray). For a point $y$ near $M$, let $\pi(y)$ denote the projection of $y$ onto $M$. For a tangent vector $u$, the true tangent space and second fundamental form at $\pi(y)$ are $T_{\pi(y)}M$ and $\Pi_{\pi(y)}(u,u)$, respectively, while the estimated tangent space and second fundamental form at $y$ are denoted by $\widetilde{T}_yM$ and $\widetilde{\Pi}_y(u,u)$.}
    \label{fig:visual-main}
\end{figure}

\subsection{Main Results}
We now summarize the main contributions of the paper. Throughout, $M\subset\mathbb{R}^D$ is a compact $C^k$ ($k\ge 6$), $d$-dimensional submanifold with positive reach $\tau>0$, and the clean data $\mathcal X$ are sampled uniformly from $M$. We observe
\[
\mathcal Y=\mathcal X+\xi,\qquad \mathcal X\sim P_M,\quad \xi\sim \mathcal N(0,\sigma^2 I_D),
\]
and write $P_\sigma=P_M* \mathcal N(0,\sigma^2 I_D)$ for the resulting density. For $y$ in a fixed tubular neighborhood $T(\tau-\varepsilon)$, let $\pi(y)$ be its nearest-point projection onto $M$ and let $v_y=y-\pi(y)$ be the normal displacement.

Our main observation is that tangent spaces, the intrinsic dimension, and the second fundamental form of $M$ are identifiable from the derivatives of the log-density
\[
\mathcal G_\sigma(y) = \nabla\log P_\sigma(y),\qquad
\mathcal H_\sigma(y) = \nabla^2\log P_\sigma(y),\qquad
\mathcal T_\sigma(y) = \nabla^3\log P_\sigma(y).
\]
Based on this, we establish both population-level identification results and sample-level statistical rates. We use $\widehat{(\cdot)}$ for population-level constructions and $\widetilde{(\cdot)}$ for their sample-level counterparts.
Figure~\ref{fig:visual-main} provides a schematic illustration of the proposed geometric estimators.

At the population level, we first derive uniform small-noise expansions for $P_\sigma$ and the derivatives of $\log P_\sigma$ on $T(\tau-\varepsilon)$. 
These expansions show that the Hessian $\mathcal H_\sigma(y)$ has $d$ eigenvalues corresponding to tangent directions and $D-d$ eigenvalues corresponding to normal directions, separated by a gap of order $\sigma^{-2}$. As a consequence, the tangent space can be recovered from the eigenspace associated with the largest $d$ eigenvalues of $\mathcal H_\sigma(y)$, for $y\in T(\tau-\varepsilon)$ with error
\[
\sin\{\Theta(\widehat T_yM,T_{\pi(y)}M)\}=O(\sigma^2).
\]
The same spectral splitting also yields a consistent estimator of the intrinsic dimension $d$.

Next, we construct estimators of the second fundamental form from density derivatives, with different formulas depending on the geometric setting.
\begin{itemize}
  \item For totally umbilical submanifolds, the gradient $\mathcal G_\sigma$ restricted to $M$ recovers the mean curvature vector and thereby yields an $O(\sigma)$ estimator of the second fundamental form.
  \item For hypersurfaces, combining the gradient $\mathcal G_\sigma(y)$ and Hessian $\mathcal H_\sigma(y)$ yields an estimator satisfying
  \[
    \|\widehat\Pi_y-\Pi_{\pi(y)}\|_{\mathrm{op}}
    =O(\|v_y\|+\sigma),
  \]
  over subsets $U_\sigma\subset T(\tau-\varepsilon)$ on which $\|\mathcal G_\sigma(y)\|$ is bounded below.
  \item In arbitrary codimension, differentiating the Hessian-based tangential projector gives an estimator on $T(\tau-\varepsilon)$ based on $(\mathcal H_\sigma,\mathcal T_\sigma)$, with error
  \[
    \|\widehat\Pi_y-\Pi_{\pi(y)}\|_{\mathrm{op}}
    =O(\|v_y\|+\sigma^2),
    \qquad y\in T(\tau-\varepsilon).
  \]
\end{itemize}
Thus, tangent spaces, the intrinsic dimension, and the second fundamental form are all identifiable from derivatives of the noisy log-density.

At the sample level, given i.i.d.\ samples $y_1,\dots,y_N\sim P_\sigma$, we estimate $P_\sigma$ and its derivatives by kernel methods in $\mathbb{R}^D$ and plug them into the population constructions. This yields uniform sample-level estimators for the tangent space and second fundamental form. Moreover, the same eigengap argument also yields consistency of the sample-level intrinsic dimension estimator. Under suitable bandwidth choices, the tangent-space estimator satisfies
\[
\sin\{\Theta(\widetilde T_yM,T_{\pi(y)}M)\}
=
O(\sigma^2)
+
O_P\big(\big(\frac{\log N}{N}\big)^{\frac{2}{D+8}}\big),
\]
while the second fundamental form estimator satisfies
\[
\|\widetilde\Pi_y-\Pi_{\pi(y)}\|_{\mathrm{op}}
=
\begin{cases}
O(\|v_y\|+\sigma)
+
O_P\big(\big(\frac{\log N}{N}\big)^{\frac{2}{D+8}}\big),
& \text{for hypersurfaces},\\[0.75em]
O(\|v_y\|+\sigma^2)
+
O_P\big(\big(\frac{\log N}{N}\big)^{\frac{2}{D+10}}\big),
& \text{in arbitrary codimension}.
\end{cases}
\]
These are ambient-dimensional nonparametric rates, reflecting the fact that our method estimates derivatives of the ambient density directly, rather than first fitting an intrinsic manifold representation.

Finally, we show that the above density-based constructions admit a geometric interpretation through ambient metrics. In particular, suitable density-induced metrics on $\mathbb{R}^D$ make the intrinsic geodesics of $M$ asymptotically compatible with ambient trajectories as the noise level vanishes. For totally umbilical submanifolds and hypersurfaces, we construct, respectively, a Riemannian metric and a degenerate metric:
\[
g_{P_\sigma} = P_\sigma^{4/d} g_E, \qquad g_{P_\sigma} = d(\log P_\sigma)\otimes d(\log P_\sigma).
\]

\section{Notation and Preliminaries}\label{sec:prelim}
In this section, we collect the geometric notation and standard facts that are used throughout the paper. Proofs of the lemmas are provided in Appendix~\ref{app:lemma-sec2}.

Throughout the paper we fix integers $d, D$, and $k\ge6$, and consider a compact $C^k$ $d$-dimensional submanifold $M\subset \mathbb{R}^D$. The ambient space $\mathbb{R}^D$ is equipped with the Euclidean metric $g_E = \langle\cdot, \cdot \rangle$. The induced Riemannian metric on $M$ is denoted by $g:=g_E|_M$. We write $V_M$ for the volume of $M$ under the induced volume measure. 
For a Euclidean space $E$, a point $z\in E$, and $r>0$, we write $B_E(z,r)$ for the open Euclidean ball of radius $r$ centered at $z$. In particular, when $E=\mathbb R^m$ we also write $B_m(z,r)$.

\subsection{Geometric Setup and Reach}
For a closed set $A\subset \mathbb{R}^D$ we recall the notion of reach.
\begin{definition}\label{def:reach}
    Let $A$ be a closed subset of $\mathbb{R}^D$. The reach of $A$ is the largest $\tau\ge 0$ such that every point at distance less than $\tau$ from $A$ has a unique nearest point in $A$.
\end{definition}
Denote the reach of $M$ by $\tau$. We assume throughout that $M$ has positive reach $\tau > 0$. For points $x \in M$ we write $T_x M$ and $T_x^\perp M$ for the tangent and normal spaces at $x$, respectively. The tangent and normal bundles are denoted by $TM$ and $T^\perp M$. 

For $\tau > 0$ we define the open $\tau$-tubular neighborhood of $M$ in the normal bundle by
\[
  \mathcal{T}(\tau) := \{ (x,v) \in T^\perp M : \|v\| < \tau \}.
\]
We identify $(x,v) \in \mathcal{T}(\tau)$ with the point $x + v \in \mathbb{R}^D$. For $0<\varepsilon<\tau$ we also introduce the closed tubular neighborhood
\[
  T(\tau - \varepsilon) := \{ (x,v) \in T^\perp M : \|v\| \le \tau - \varepsilon \}.
\]
Since $M$ is compact and $\tau > 0$, the set $T(\tau - \varepsilon)$ is compact. We use $\mathcal T(\tau)$ for the open tubular neighborhood where the nearest-point projection is defined, and $T(\tau-\varepsilon)$ for its closed inner truncation on which all uniform estimates below are stated.

The exponential map at $x \in M$ is denoted by $  \exp_x\colon  T_x M \to M$. The injectivity radius at $x \in M$, denoted by $\mathrm{inj}(x)$, is the largest radius $r>0$ such that $\exp_x\colon B_{T_xM}(0,r)\to M$ is a diffeomorphism, and the global injectivity radius of $M$ is $\mathrm{inj}_M := \inf_{x \in M} \mathrm{inj}(x).$

The reach provides a lower bound on the injectivity radius.
\begin{lemma}\label{lem:injectivity-radius} \citep[Corollary~4]{Alexander2005GaussEA}
    Let $M$ be a $C^2$ embedded submanifold with reach $\tau>0$. Then
    \[
    \mathrm{inj}_M \ge \pi\tau.
    \]
\end{lemma}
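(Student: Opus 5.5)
The argument rests on two classical ingredients: the reach bounds the second fundamental form, and Klingenberg's lemma bounds the injectivity radius below by the minimum of $\pi/\sqrt{K_{\max}}$ and half the length of the shortest closed geodesic. We bound each of these two quantities in terms of $\tau$.

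\textbf{Curvature bound.} We first show $\|\Pi_x\|_{\mathrm{op}}\le 1/\tau$ for every $x\in M$. Take a unit-speed geodesic $\gamma$ of $M$ with $\gamma(0)=x$ and a unit normal $\nu\in T_x^\perp M$. For $r<\tau$, the ball $B_D(x+r\nu,r)$ contains no point of $M$; otherwise $x+r\nu$ would have a nearest point other than $x$, contradicting uniqueness of the projection within the reach. Expanding $\|\gamma(t)-x-r\nu\|^2\ge r^2$ to second order in $t$ gives $1-r\langle \Pi_x(\dot\gamma,\dot\gamma),\nu\rangle\ge 0$. Letting $r\to\tau$ yields $|\Pi_x(u,u)|\le 1/\tau$ for all unit $u$. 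By the Gauss equation, the sectional curvatures then satisfy $K\le \langle\Pi(u,u),\Pi(w,w)\rangle-\|\Pi(u,w)\|^2\le 1/\tau^2$. Consequently the conjugate radius is at least $\pi\tau$ (Rauch comparison).

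\textbf{Closed geodesics.} Next we show every closed geodesic of $M$ has length at least $2\pi\tau$. Such a geodesic $c$ is a closed curve in $\mathbb{R}^D$ whose ambient curvature is $\|\ddot c\|=\|\Pi(\dot c,\dot c)\|\le 1/\tau$. Fenchel's theorem says the total curvature of a closed curve is at least $2\pi$, so $L/\tau\ge 2\pi$, i.e.\ $L\ge 2\pi\tau$. Klingenberg's lemma on the compact manifold $M$ then gives $\mathrm{inj}_M\ge\min(\pi\tau,\tfrac12\cdot 2\pi\tau)=\pi\tau$.

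\textbf{Main obstacle.} The delicate step is justifying the curvature bound with only $C^2$ regularity. Klingenberg's lemma and Rauch comparison classically assume a smooth metric, whereas a $C^2$ embedding yields only a $C^1$ metric. Under the paper's standing assumption $k\ge 6$ this issue disappears, and we would simply invoke the cited result of \citet{Alexander2005GaussEA} for the general $C^2$ case. The lower bound $K\le 1/\tau^2$ is all that comparison requires; no lower curvature bound is needed.
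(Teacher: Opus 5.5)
The paper gives no proof of this lemma; it imports it from \citet{Alexander2005GaussEA}. That work argues synthetically, for subspaces of spaces of curvature bounded above, via a Gauss-equation comparison for sets of extrinsic curvature at most $1/\tau$. This framework is what makes the bare $C^2$ hypothesis admissible.

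Your route is the classical smooth one, and it is correct:
\begin{itemize}
\item start from $\|\Pi\|_{\mathrm{op}}\le 1/\tau$;
\item the Gauss equation gives $K\le\tau^{-2}$, hence conjugate radius at least $\pi\tau$;
\item a closed geodesic $c$ satisfies $\|\ddot c\|=\|\Pi(\dot c,\dot c)\|\le 1/\tau$, so Fenchel's theorem gives length at least $2\pi\tau$;
\item Klingenberg's lemma combines the two.
\end{itemize}
The chain is self-contained for compact $M$ of class $C^3$ or better, which covers the paper's standing assumption $k\ge 6$. What it buys is transparency with textbook tools. What it does not give is the stated $C^2$ generality; for that you still fall back on the citation, as you say. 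Your use of Klingenberg's lemma also relies on compactness, which the paper assumes but the lemma statement does not. You are right to use the global form of Klingenberg's lemma: at a point minimizing the injectivity radius the loop closes smoothly. For a geodesic loop with a corner, Fenchel's bound would only give length at least $\pi\tau$.

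One step needs repair: the claim that $B_D(x+r\nu,r)\cap M=\emptyset$ for $r<\tau$ does not follow from Definition~\ref{def:reach} as you argue it. If that ball met $M$, the point $x+r\nu$ would simply have its unique nearest point at some $q\neq x$, and uniqueness is not violated. What you actually need is that the projection is constant along normal segments of length less than $\tau$. This is a theorem of Federer on sets of positive reach, and it is not a one-line consequence of the definition. If you try to prove it by following $\pi(x+s\nu)$ continuously in $s$, the first place it can fail is the focal distance $1/\lambda_{\max}(S_\nu)$. Ruling out failure before $\tau$ is therefore exactly the curvature bound you are trying to derive, so that route is circular.

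Your one-line argument is valid under the Niyogi--Smale--Weinberger definition, where the normal bundle of radius $\tau$ is assumed embedded. There, $x+r\nu=q+(x+r\nu-q)$ exhibits two preimages of the endpoint map. The simplest fix is to cite Lemma~\ref{lem:second-fundamental-form-bound}, which the paper already supplies, and start directly from $\|\Pi_x\|_{\mathrm{op}}\le\tau^{-1}$.
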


\subsection{Tubular Neighborhoods and Projection}
For a set $A \subset \mathbb{R}^D$ and a point $z \in \mathbb{R}^D$, we write $  d(z,A) := \min_{a \in A} \|z - a\|$ for the Euclidean distance from $z$ to $A$. When the nearest point is unique we denote the corresponding projection by $\pi_A(z) := \arg \min_{a \in A} \|z - a\|$. By default, for each $y\in \mathcal T(\tau)$, we write $\pi(y)$ for the nearest-point projection onto $M$ and let $v_y := y - \pi(y)$ denote the corresponding normal displacement.

The positive reach assumption ensures that the projection $\pi \colon \mathcal T(\tau) \to M$ is well-defined. Since $M$ is $C^k$ with $k \ge 6$, the tubular neighborhood theorem implies that $\pi$ and $v_y$ are $C^{k-1}$ on $\mathcal T(\tau)$. 
Moreover, we write
\[
P_T(y):\mathbb R^D\to T_{\pi(y)}M,
\qquad
P_N(y):\mathbb R^D\to T_{\pi(y)}^\perp M
\]
for the orthogonal projections onto the tangent and normal spaces at $\pi(y)$, respectively. Since $P_T(x)$ and $P_N(x)$ are $C^{k-1}$ in $x\in M$ and $\pi$ is $C^{k-1}$ on $T(\tau-\varepsilon)$, $P_T(y)$ and $P_N(y)$ are $C^{k-1}$ on $T(\tau-\varepsilon)$. 

\subsection{Exponential Coordinates and Curvature}
For $r < \mathrm{inj}(x)$, the exponential map introduces an exponential coordinate chart.
In particular, the map $(x,u)\mapsto \exp_x(u)$, its local inverse $(x,z)\mapsto \exp_x^{-1}(z)$, and the metric coefficients in exponential coordinates $(x,u)\mapsto g_{ij}(x,u)$ are $C^{k-2}$.

We record the following standard expansions under the standing $C^k$ assumption. 
\begin{lemma}\label{lem:exponential-map-expansion}
There exist constants $r_0\in(0,\mathrm{inj}_M)$ and $C$ such that, for every $x\in M$ and every $u\in T_xM$ with $\|u\|\le r_0$,
\[
\exp_x(u)
=
x+u+\frac12\Pi_x(u,u)+Q_e(x,u),
\qquad
\|Q_e(x,u)\|\le C\|u\|^3.
\]
\end{lemma}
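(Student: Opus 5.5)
Fix $x\in M$ and a unit vector $w\in T_xM$, and let $\gamma(t)=\exp_x(tw)$ be the unit-speed geodesic. We Taylor expand $\gamma$ as a curve in $\mathbb R^D$. It is $C^{k-2}$ with $k\ge6$, so it is at least $C^4$, and Taylor's formula with integral remainder gives
\[
\gamma(t)=\gamma(0)+t\gamma'(0)+\tfrac{t^2}{2}\gamma''(0)+\int_0^t\tfrac{(t-s)^2}{2}\gamma'''(s)\,ds .
\]
Here $\gamma(0)=x$ and $\gamma'(0)=w$. The geodesic equation in the ambient space says that $\gamma''(s)$ is normal, namely $\gamma''(s)=\Pi_{\gamma(s)}(\gamma'(s),\gamma'(s))$ by the Gauss formula, since the tangential part $\nabla_{\gamma'}\gamma'$ vanishes. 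In particular $\gamma''(0)=\Pi_x(w,w)$. Setting $u=tw$ with $t=\|u\|$ and using bilinearity, $\tfrac{t^2}{2}\Pi_x(w,w)=\tfrac12\Pi_x(u,u)$. This identifies the first three terms, and $Q_e(x,u)$ is exactly the integral remainder. Its norm is bounded by $\tfrac{t^3}{6}\sup_{|s|\le t}\|\gamma'''(s)\|$.

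The main point is therefore a bound on $\|\gamma'''\|$ that is uniform over $x\in M$, unit $w$, and $|s|\le r_0$. Two routes are available, and I would use the first.

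\emph{First route.} Differentiate $\gamma''(s)=\Pi_{\gamma(s)}(\gamma'(s),\gamma'(s))$. Viewed as a map on the unit tangent bundle, $(p,v)\mapsto\Pi_p(v,v)$ is $C^{k-2}$, so its derivative is $C^{k-3}$. On the compact unit tangent bundle this derivative is bounded. Along a geodesic, $\|\gamma'(s)\|=1$ for all $s$. Hence $\gamma'''(s)=\nabla^E_{\gamma'}\bigl(\Pi(\gamma',\gamma')\bigr)$ is bounded by a constant $C$ depending only on $M$. The covariant derivative of $\Pi$ and the term from $\gamma''$ being normal both enter only through continuous functions on a compact set.

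\emph{Second route.} Use the $C^{k-2}$ regularity of $(x,u)\mapsto\exp_x(u)$ stated above, together with compactness of $\{(x,u):\|u\|\le r_0\}$, to bound the third $u$-derivatives of $\exp_x(u)$ uniformly. The remainder estimate then follows directly from Taylor's theorem in $u$, after matching the first and second $u$-derivatives at $u=0$ with $\mathrm{id}$ and $\Pi_x$ as above.

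For either route, pick any $r_0<\mathrm{inj}_M$; this is positive, in fact $\ge\pi\tau$, by Lemma~\ref{lem:injectivity-radius}. The geodesic is then defined for $|t|\le r_0$ and the chart is valid there. The constant $C$ is independent of $x$ by compactness, which gives $\|Q_e(x,u)\|\le C\|u\|^3$.
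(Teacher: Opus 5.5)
Your proposal is correct and takes the same route as the paper: Taylor-expand the geodesic $t\mapsto\exp_x(tw)$ and identify the quadratic term through the geodesic equation $\gamma''=\Pi_{\gamma}(\gamma',\gamma')$. Your explicit integral remainder, with $\gamma'''$ bounded over the compact unit tangent bundle, also makes the constant $C$ visibly uniform in $x$, which the paper's proof leaves implicit in its $O(\|u\|^3)$.
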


\begin{lemma}\label{lem:volume-element-expansion}
There exist constants $r_0\in(0,\mathrm{inj}_M)$ and $C$ such that, for every $x\in M$ and every $u\in T_xM$ with $\|u\|\le r_0$,
\[
\sqrt{\det g_{ij}(x,u)}
=
1+Q_g(x,u),
\qquad
|Q_g(x,u)|\le C\|u\|^2.
\]
\end{lemma}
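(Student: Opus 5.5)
We prove Lemma~\ref{lem:volume-element-expansion} by a second-order Taylor expansion of the metric coefficients in exponential (normal) coordinates, with constants made uniform in $x$ by compactness of $M$. Fix $x\in M$ and an orthonormal basis $e_1,\dots,e_d$ of $T_xM$. Write $u=\sum_i u^i e_i$ and $\phi_x(u)=\exp_x(u)$. Then $g_{ij}(x,u)=\langle \partial_i\phi_x(u),\partial_j\phi_x(u)\rangle$. By the remark preceding Lemma~\ref{lem:exponential-map-expansion}, the map $(x,u)\mapsto g_{ij}(x,u)$ is $C^{k-2}$ with $k-2\ge 4$, so it is in particular $C^2$ jointly in $(x,u)$.

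\textbf{Step 1: the first two Taylor terms.} In normal coordinates $g_{ij}(x,0)=\delta_{ij}$, since $d(\exp_x)_0=\mathrm{id}$. Moreover $\partial_k g_{ij}(x,0)=0$, because the Christoffel symbols vanish at the origin of normal coordinates. Equivalently, one can differentiate $\phi_x(u)=x+u+\frac12\Pi_x(u,u)+O(\|u\|^3)$ from Lemma~\ref{lem:exponential-map-expansion}. Since $\Pi_x$ takes values in $T_x^\perp M$, this gives $\partial_i\phi_x(u)=e_i+\Pi_x(u,e_i)+O(\|u\|^2)$. Here the cross term $\langle e_i,\Pi_x(u,e_j)\rangle$ vanishes, so the correction to $g_{ij}$ is quadratic in $u$. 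To be safe about differentiating the remainder, I would instead argue directly from the $C^2$ regularity of $g_{ij}$ and the vanishing of its first derivatives at $u=0$.

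\textbf{Step 2: uniform remainder.} Taylor's theorem with integral remainder gives
\[
|g_{ij}(x,u)-\delta_{ij}|\le \tfrac12 \sup_{\|w\|\le r_0}\|\partial_u^2 g_{ij}(x,w)\|\,\|u\|^2 .
\]
The supremum is taken over the compact set $\{(x,w):x\in M,\ \|w\|\le r_0\}$ with $r_0<\mathrm{inj}_M$; this set is nonempty by Lemma~\ref{lem:injectivity-radius}. The only subtlety is that $g_{ij}$ depends on the choice of frame at $x$, so it is not defined globally. I handle this by working locally on $M$ with smooth orthonormal frames, using finitely many charts by compactness. Alternatively, one can note that $\det g_{ij}$ is frame-independent, since it equals the squared Jacobian of $\exp_x$ relative to the volume measures. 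Either way the second-derivative bound is uniform, giving $\|G(x,u)-I\|_{\mathrm{op}}\le C_1\|u\|^2$ for the matrix $G=(g_{ij})$.

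\textbf{Step 3: determinant and square root.} Write $G=I+E$ with $\|E\|\le C_1\|u\|^2$. After shrinking $r_0$ so that $C_1 r_0^2\le 1/2$, the eigenvalues of $G$ lie in $[1/2,3/2]$. Then
\[
|\det G-1|\le \bigl(1+\|E\|\bigr)^d-1\le C_2\|u\|^2 .
\]
Since $\sqrt{\cdot}$ is Lipschitz on $[2^{-d},(3/2)^d]$, we get $|\sqrt{\det G}-1|\le C\|u\|^2$, which defines $Q_g$ with the claimed bound. The main obstacle is the bookkeeping in Step 2, that is, making the constant uniform in $x$ despite the frame dependence. The frame-independence of $\det g_{ij}$ resolves it most cleanly.
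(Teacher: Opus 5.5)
Your proposal is correct and follows essentially the same route as the paper: in normal coordinates $g_{ij}(x,0)=\delta_{ij}$ and $\partial_k g_{ij}(x,0)=0$ because the Christoffel symbols vanish at the origin, and a second-order Taylor bound, made uniform by compactness, gives the $O(\|u\|^2)$ remainder. The differences are minor. You Taylor-expand $g_{ij}$ and then pass explicitly to $\det$ and $\sqrt{\cdot}$, whereas the paper expands $\sqrt{\det g_{ij}}$ directly, and you handle the frame-dependence in the uniformity argument more carefully than the paper does.
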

We denote by $\nabla$ the ambient derivative in $\mathbb{R}^D$ (i.e.,\ the Levi--Civita connection of $g_E$), and by $\nabla^M$ the Levi--Civita connection on $(M,g)$. The second fundamental form at $x \in M$ is the symmetric bilinear map
\[
  \Pi_x \colon T_x M \times T_x M \to T_x^\perp M, \qquad
  \Pi_x(X,Y) = (\nabla_X Y)^\perp,
\]
where $(\cdot)^\perp$ denotes orthogonal projection onto $T_x^\perp M$. In particular, $x\mapsto \Pi_x$ has $C^{k-2}$ regularity. For a normal vector $n \in T_x^\perp M$, the associated shape operator $S_n \colon T_x M \to T_x M$ is given by $S_n(X) = -(\nabla_X n)^\top$, where $(\cdot)^\top$ denotes the projection onto $T_x M$. The mean curvature vector at $x$ is the trace $H_x = \frac{1}{d}\operatorname{Tr}(\Pi_x) \in T_x^\perp M$. We denote the operator norm by $\|\cdot\|_{\mathrm{op}}$. For multilinear maps, $\|\cdot\|_{\mathrm{op}}$ is the usual supremum over unit inputs.

Positive reach implies a uniform bound on the second fundamental form:
\begin{lemma}\label{lem:second-fundamental-form-bound} \citep[Proposition~6.1]{Niyogi2008FindingTH}
Let $M$ be an embedded submanifold with reach $\tau > 0$. Then
\[
  \sup_{x\in M}\|\Pi_x\|_{\mathrm{op}} \le \tau^{-1}.
\]
\end{lemma}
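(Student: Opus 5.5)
The statement is Lemma~\ref{lem:second-fundamental-form-bound}: $\sup_{x\in M}\|\Pi_x\|_{\mathrm{op}}\le\tau^{-1}$. The plan is to reduce the operator-norm bound to a pointwise estimate along unit-speed geodesics, and then use the reach to confine $M$ to the complement of open balls of radius $\tau$ tangent to $M$. Fix $x\in M$ and a unit vector $u\in T_xM$, and let $\gamma(t)=\exp_x(tu)$ be the unit-speed geodesic. Since $\gamma$ is a geodesic of $M$, its acceleration is purely normal:
\[
\gamma''(0)=\Pi_x(u,u).
\]
This identity can also be read off from Lemma~\ref{lem:exponential-map-expansion}. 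Hence it suffices to show $\|\Pi_x(u,u)\|\le\tau^{-1}$ for every unit $u$.

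The key geometric step is a ball-exclusion property. Take any unit normal $n\in T_x^\perp M$ and any $0<r<\tau$, and set $z=x+rn$. Because $\|z-x\|=r<\tau$, the point $z$ has a unique nearest point on $M$, and since $z-x$ is normal at $x$, that nearest point is $x$. I would justify the last claim by the standard fact that, on $\mathcal T(\tau)$, the normal-bundle map $(x,v)\mapsto x+v$ is injective, so the nearest-point projection of $x+rn$ is $x$. Consequently $d(z,M)=r$, and in particular $\|\gamma(t)-z\|^2\ge r^2$ for all $t$.

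Now expand $f(t)=\|\gamma(t)-z\|^2$ at $t=0$. We have $f(0)=r^2$ and $f'(0)=2\langle u,x-z\rangle=0$. For the second derivative,
\[
f''(0)=2\|u\|^2+2\langle\gamma''(0),x-z\rangle=2-2r\langle\Pi_x(u,u),n\rangle.
\]
Since $t=0$ is a minimum of $f$, we get $f''(0)\ge0$, that is, $\langle\Pi_x(u,u),n\rangle\le 1/r$. Letting $r\uparrow\tau$ and choosing $n=\Pi_x(u,u)/\|\Pi_x(u,u)\|$ when this vector is nonzero gives $\|\Pi_x(u,u)\|\le\tau^{-1}$.

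Finally I would pass from the quadratic form to the bilinear map by polarization, using that each component $\langle\Pi_x(\cdot,\cdot),n\rangle$ is a symmetric bilinear form. Such a form has operator norm equal to the maximum of $|\langle\Pi_x(u,u),n\rangle|$ over unit $u$, which is its largest absolute eigenvalue. Hence $|\langle\Pi_x(u,w),n\rangle|\le\tau^{-1}$ for all unit $u,w$. Taking the supremum over unit $n$ gives $\|\Pi_x(u,w)\|\le\tau^{-1}$.

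The main obstacle is the claim that the nearest point of $x+rn$ is $x$ for every $r<\tau$. The reach definition only guarantees uniqueness of the nearest point. A direct argument runs as follows. Let $r^*$ be the supremum of those $r$ for which the nearest point of $x+rn$ is $x$; this set is nonempty for small $r$ by the local graph structure. If $r^*<\tau$, continuity of the projection $\pi$ on $\mathcal T(\tau)$ would force $\pi(x+r^*n)=x$. One then needs a contradiction just beyond $r^*$, obtained either from a focal point or from non-uniqueness of the nearest point. I would instead cite the standard result (Federer) that $\pi(x+v)=x$ for normal $v$ with $\|v\|<\tau$, since this is exactly the assertion that the tubular map is a bijection onto $\mathcal T(\tau)$, which the preliminaries already use.
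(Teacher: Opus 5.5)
Your proof is correct and is essentially the standard argument behind the cited Proposition~6.1 of Niyogi--Smale--Weinberger; the paper gives no proof of its own. As in that argument, you exclude $M$ from the open tangent balls of radius $r<\tau$ at $x$, compare second derivatives along a unit-speed geodesic to get $\langle \Pi_x(u,u),n\rangle\le 1/r$, and then polarize. The one external input, that $\pi(x+v)=x$ for normal $v$ with $\|v\|<\tau$, is Federer's Theorem~4.8(12), equivalently the injectivity of the normal map on $\mathcal T(\tau)$ that the paper also uses implicitly. Citing it is appropriate, since your sup-over-$r$ sketch would not close by continuity alone.
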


\subsection{Generic Notation}
For two linear subspaces $E,F\subset \mathbb R^D$, we write $\Theta(E,F)$ for their largest principal angle, so that $\sin\{\Theta(E,F)\}$ denotes the corresponding subspace discrepancy.

Unless stated otherwise, generic constants $C, C_1, C_2, \dots$ and $O(\cdot)$ bounds in Sections~\ref{sec:density-and-derivatives}--\ref{sec:metric-change} are uniform over $y\in T(\tau-\varepsilon)$ and $0<\sigma\le \sigma_0$ for sufficiently small $\sigma_0$, and may depend on $M$, $d$, $D$, $k$, $\tau$, and $\varepsilon$. In Section~\ref{sec:sample-level-estimator}, where $\sigma$ is fixed, uniformity is only with respect to $y\in T(\tau-\varepsilon)$, and the implied constants may depend on $\sigma$.

\section{Asymptotic Expansions of the Noisy Density and Its Derivatives}\label{sec:density-and-derivatives}
In this section, we study the small-noise structure of the manifold-generated density $P_\sigma = P_M* \mathcal N(0,\sigma^2 I_D)$ on the tubular neighborhood $T(\tau-\varepsilon)$.
Our goal is to obtain uniform expansions of $P_\sigma$ and of the derivatives of $\log P_\sigma$ that we later use to recover tangent spaces and the second
fundamental forms. Throughout, $M$ is a compact $C^k$ submanifold with $k\ge 6$ and positive reach $\tau>0$, and $0<\varepsilon<\tau$ is fixed. For each fixed $\varepsilon$, we work in the small-noise regime $0<\sigma\le\sigma_0$, where $\sigma_0>0$ is sufficiently small.

Proofs of the theorems and corollaries are given in Section~\ref{sec:pf-density-derivative}; the proof of Lemma~\ref{lem:derivative-of-v-pi} is given in Appendix~\ref{app:lemma-sec3}.

\subsection{Manifold-Generated Noisy Density}
Let $P_M$ denote the uniform probability measure on $M$, namely $dP_M(x)=V_M^{-1}d\mu(x)$, where $\mu$ is the induced volume measure on $M$. Let $\xi\sim\mathcal N(0,\sigma^2I_D)$ and let $\Phi_\sigma$ be its density. Then the noisy observation $\mathcal Y= \mathcal X+\xi$ has density
\[
P_{\sigma}(y)= \int_M \Phi_\sigma(y-x)dP_M(x) = \frac{1}{V_M(2\pi\sigma^2)^{\frac{D}{2}}} \int_M \exp\big(-\frac{\|y-x\|^2}{2\sigma^2}\big)d\mu(x). 
\]
Recall that $\pi(y)$ denotes the projection onto $M$ and $v_y$ the corresponding normal displacement. For $\Pi_{\pi(y)}$ the second fundamental form at $\pi(y)\in M$, we define a self-adjoint map $A_y \colon T_{\pi(y)}M \to T_{\pi(y)}M$ by
\[
A_y = I_{T_{\pi(y)}M} - \langle v_y, \Pi_{\pi(y)}\rangle,
\quad
\langle A_y u, v\rangle
=
\langle u,v\rangle - \langle v_y,\Pi_{\pi(y)}(u,v)\rangle.
\]
Since $y \mapsto \pi(y)$ is $C^{k-1}$ on $T(\tau-\varepsilon)$ and $x\mapsto \Pi_x$ is $C^{k-2}$ on $M$, for $k\ge 6$ the map $y\mapsto A_y$ is at least $C^3$.

The following theorem gives an expansion of $P_\sigma(y)$.
\begin{theorem}\label{th:probability-noisy}
Uniformly for $y\in T(\tau-\varepsilon)$ and $0<\sigma\le\sigma_0$,
\[
P_\sigma(y)
=\frac{1}{V_M(2\pi\sigma^2)^{\frac{D-d}{2}}}
\exp\big(-\frac{\|v_y\|^2}{2\sigma^2}\big)
\frac{1}{\sqrt{\det A_y}}
\big(1+O(\sigma\|v_y\| + \sigma^2)\big).
\] 
\end{theorem}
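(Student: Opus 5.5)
We will prove the expansion by a Laplace-type argument centred at $x_0:=\pi(y)$, working in exponential coordinates on $M$ at $x_0$. Fix $y\in T(\tau-\varepsilon)$ and write $v=v_y$, so $y=x_0+v$ with $v\perp T_{x_0}M$. We split the integral $\int_M \exp(-\|y-x\|^2/2\sigma^2)\,d\mu(x)$ into a near part over $\exp_{x_0}(B_{T_{x_0}M}(0,r))$ and a far part over its complement. Here $r\le r_0$ is a fixed small radius, with $r_0$ as in Lemmas~\ref{lem:exponential-map-expansion} and \ref{lem:volume-element-expansion}.

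\textbf{Far part.} Since $\|v\|\le\tau-\varepsilon<\tau$ and $x_0$ is the unique nearest point, compactness and positive reach should give a uniform gap
\[
\|y-x\|^2\ge\|v\|^2+c\,r^2
\]
for $x\notin \exp_{x_0}(B(0,r))$. Two facts give this. First, positive reach implies the standard bound $\langle y-x_0,x-x_0\rangle\le \|v\|\,\|x-x_0\|^2/(2\tau)$. Second, the expansion below controls the geodesically close region. Hence the far part is $O(e^{-\|v\|^2/2\sigma^2}e^{-cr^2/2\sigma^2})$, which is exponentially negligible relative to the main term $\sigma^d e^{-\|v\|^2/2\sigma^2}$.

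\textbf{Near part.} For $x=\exp_{x_0}(u)$, Lemma~\ref{lem:exponential-map-expansion} and $v\perp u$ give
\[
\|y-x\|^2=\|v\|^2+\|u\|^2-\langle v,\Pi_{x_0}(u,u)\rangle+R(u),
\]
where
\[
R(u)=-2\langle v,Q_e\rangle+\tfrac14\|\Pi(u,u)\|^2+O(\|u\|^4),
\]
and hence $|R(u)|\le C(\|v\|\,\|u\|^3+\|u\|^4)$. So the exponent becomes $-\|v\|^2/2\sigma^2-\langle A_y u,u\rangle/2\sigma^2-R(u)/2\sigma^2$. By Lemma~\ref{lem:second-fundamental-form-bound}, $A_y\succeq (1-\|v\|/\tau)I\succeq(\varepsilon/\tau)I$, uniformly. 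This uniform bound controls the Gaussian tails and also absorbs $R$ for $r$ small, using $|R|\le C r\|u\|^2$. Substituting $u=\sigma w$, the measure becomes $\sigma^d(1+Q_g(\sigma w))\,dw$ by Lemma~\ref{lem:volume-element-expansion}, and the Gaussian integral $\int e^{-\langle A_y w,w\rangle/2}dw=(2\pi)^{d/2}(\det A_y)^{-1/2}$. Multiplying by the prefactor $V_M^{-1}(2\pi\sigma^2)^{-D/2}$ yields exactly the stated leading term.

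\textbf{Remainder (the main obstacle).} We must show that the relative error is $O(\sigma\|v\|+\sigma^2)$, and a crude bound $|e^{-R/2\sigma^2}-1|$ would lose a factor. In $w$-variables, $R/\sigma^2=O(\sigma\|v\|\,\|w\|^3+\sigma^2\|w\|^4)$. The volume correction contributes $O(\sigma^2\|w\|^2)$. Using $|e^{t}-1|\le |t|e^{|t|}$ with $|t|\le \tfrac14\langle A_yw,w\rangle$ on the near region, integrated against the Gaussian, gives a relative error $O(\sigma\|v\|+\sigma^2)$. The cubic term has odd parity; this could improve the error, but it is not needed for the stated bound. The delicate point is choosing $r$ uniformly so that $|R(u)|/2\le \tfrac14\langle A_y u,u\rangle$ on $\|u\|\le r$. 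This requires the constant in $Q_e$ and the reach bound to be uniform in $x_0\in M$ and in $\|v\|\le\tau-\varepsilon$, which Lemmas~\ref{lem:exponential-map-expansion}--\ref{lem:second-fundamental-form-bound} provide. Finally, the truncation of the Gaussian integral to $\|w\|\le r/\sigma$ costs only $e^{-cr^2/\sigma^2}$, which is absorbed into $O(\sigma^2)$.
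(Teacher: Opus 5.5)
Your proposal is correct and follows essentially the same route as the paper. Both arguments use a Laplace expansion in exponential coordinates at $\pi(y)$ with $\|y-x\|^2=\|v_y\|^2+u^\top A_y u+R(y,u)$ and $|R|\le C(\|v_y\|\|u\|^3+\|u\|^4)$. The remainder $R$ is absorbed into a fraction of the quadratic form by shrinking the radius, and the error is controlled by $|e^{-a}(1+Q_g)-1|\le |a|e^{|a|}|1+Q_g|+|Q_g|$. The Gaussian truncation and the far-field contribution are both exponentially negligible.

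The differences are minor. You use a sharp cutoff, and for the far part you use the reach inequality $\langle v_y,x-\pi(y)\rangle\le \|v_y\|\|x-\pi(y)\|^2/(2\tau)$. The paper instead uses a smooth cutoff $\chi_y$, so that the same argument also gives the derivative bounds needed for Corollary~\ref{co:log-density}. For the far part, the paper uses a compactness argument that yields a gap $\delta_{\varepsilon,r}>0$.
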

The proof is based on Taylor expansions in exponential coordinates around $\pi(y)$. The leading factor separates the Gaussian decay in the normal direction from the curvature correction $(\det A_y)^{-1/2}$.

For the analysis of derivatives, it is convenient to rewrite Theorem~\ref{th:probability-noisy} at the level of the log-density and to record uniform bounds on the remainder and its derivatives.

\begin{corollary}\label{co:log-density}
Uniformly for $y\in T(\tau-\varepsilon)$ and $0<\sigma\le\sigma_0$,
\begin{equation}\label{eq:log density}
\log P_\sigma(y)
=
\log \big(V_M(2\pi\sigma^2)^{\frac{D-d}{2}}\big)^{-1}
-
\frac{\|v_y\|^2}{2\sigma^2}
-
\frac{1}{2}\log\det A_y
+
R(y,\sigma),
\end{equation}
where $R(y,\sigma)$ is $C^3$ on $T(\tau-\varepsilon)$ and its ambient derivatives satisfy
\[
|R(y,\sigma)| \le C_{0}\big(\sigma\|v_y\|+\sigma^2\big),
\quad
\|\nabla_y^m R(y,\sigma)\|_{\mathrm{op}}
\le
C_{m}\sigma,
\qquad
m\in\{1,2,3\}.
\]
\end{corollary}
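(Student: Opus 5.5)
The plan is to set $R(y,\sigma):=\log P_\sigma(y)-\log\big(V_M(2\pi\sigma^2)^{\frac{D-d}{2}}\big)^{-1}+\frac{\|v_y\|^2}{2\sigma^2}+\frac12\log\det A_y$ and to treat the zeroth-order bound and the derivative bounds separately. The zeroth-order bound follows directly from Theorem~\ref{th:probability-noisy}. It gives $R=\log(1+E)$ with $|E|\le C(\sigma\|v_y\|+\sigma^2)\le 1/2$ for $\sigma\le\sigma_0$, hence $|R|\le 2|E|$. The derivative bounds cannot be read off from Theorem~\ref{th:probability-noisy}, since an $O(\cdot)$ remainder carries no derivative information. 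So I would redo the expansion as a representation that is smooth in $y$ and compute derivatives of that representation, rather than of the $O(\cdot)$ form. Regularity is not a concern: $\log P_\sigma$ is $C^\infty$, and $\|v_y\|^2$ and $\log\det A_y$ are $C^3$, so $R$ is $C^3$.

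\textbf{Step 1: a $y$-smooth integral representation.} I would write
\[
F(y,\sigma):=V_M(2\pi\sigma^2)^{\frac{D-d}{2}}e^{\|v_y\|^2/2\sigma^2}P_\sigma(y)=\int_M (2\pi\sigma^2)^{-d/2}\exp\Big(-\frac{\|y-x\|^2-\|v_y\|^2}{2\sigma^2}\Big)d\mu(x),
\]
so that $R=\log F+\frac12\log\det A_y$. I then split $M$ into $\{x:\|x-\pi(y)\|\ge r_0/2\}$ and its complement.
\begin{itemize}[leftmargin=*]
\item \emph{Far part.} For $y\in T(\tau-\varepsilon)$, positive reach gives $\|y-x\|^2-\|v_y\|^2\ge c>0$ there. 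Each $y$-derivative of the integrand costs at most a factor $C\sigma^{-2}$. Hence this part and its first three $y$-derivatives are $O(e^{-c/(4\sigma^2)})$.
\item \emph{Near part.} Locally in $y$ I fix a $C^{k-2}$ orthonormal frame $E(y)$ of $T_{\pi(y)}M$; by compactness finitely many patches suffice. I substitute $x=\exp_{\pi(y)}(\sigma E(y)s)$ with $s\in\mathbb R^d$.
\end{itemize}
Using Lemma~\ref{lem:exponential-map-expansion}, Lemma~\ref{lem:volume-element-expansion}, and $v_y\perp T_{\pi(y)}M$, the exponent becomes $-\tfrac12 s^\top A_y s-\sigma\,\phi(y,s,\sigma)$. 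Here $\phi$ collects the cubic and higher terms $\sigma^{-3}\langle\cdot\rangle$ from $Q_e$ and the $\|\tfrac12\Pi(u,u)+Q_e\|^2$ terms. The Jacobian is $1+\sigma^2\psi(y,s,\sigma)$.

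\textbf{Step 2: uniform derivative control.} The maps $\exp$, $\pi$, $E$, and $\Pi$ have regularity at least $C^{k-2}\supset C^4$. Writing the Taylor remainders in integral form, I would show that $\phi$, $\psi$ and their $y$-derivatives up to order $3$ are bounded by $C(1+|s|)^{p}$ on the scaled domain $|s|\le r_0/\sigma$. They are continuous in $y$ uniformly over $\sigma\le\sigma_0$. Moreover $A_y\succeq c I$ uniformly, since $\|v_y\|\le\tau-\varepsilon$ and $\|\Pi\|\le\tau^{-1}$ by Lemma~\ref{lem:second-fundamental-form-bound}. On the scaled domain the $\sigma\phi$ term can be absorbed into a Gaussian $e^{-c|s|^2/4}$ dominating factor, after shrinking $r_0$ so that $\sigma|\phi|\le \tfrac{c}{4}|s|^2$. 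Then:
\begin{itemize}[leftmargin=*]
\item $F=(2\pi)^{-d/2}\int e^{-\frac12 s^\top A_y s}\,\big(e^{-\sigma\phi}(1+\sigma^2\psi)\big)\,ds+O(e^{-c/\sigma^2})$.
\item $e^{-\sigma\phi}(1+\sigma^2\psi)-1=\sigma\,\chi(y,s,\sigma)$, where $\chi$ and its $y$-derivatives up to order $3$ are dominated by $C(1+|s|)^{p'}e^{c|s|^2/4}$.
\item Dominated differentiation under the integral therefore gives $F=(\det A_y)^{-1/2}\big(1+\sigma G(y,\sigma)\big)$ with $\sup_{y,\sigma}\|\nabla^m_y G\|\le C$ for $m\le3$.
\end{itemize}

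\textbf{Step 3: conclusion.} From Step 2, $R=\log(1+\sigma G)$. Applying the chain rule (Faà di Bruno) with $|\sigma G|\le1/2$ gives $\|\nabla^m R\|_{\mathrm{op}}\le C_m\sigma$ for $m=1,2,3$.

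\textbf{Main obstacle.} I expect Step 2 to be the hardest: every $y$-derivative must avoid producing negative powers of $\sigma$. The naive route differentiates the original integral, where each derivative produces $(\pi(y)-x)/\sigma^2\sim\sigma^{-1}$ and relies on cancellations among cumulants; I would avoid it. The rescaled-coordinate representation sidesteps this, because all $y$-dependence enters through smooth coefficients ($A_y$, $\pi(y)$, $E(y)$, Taylor coefficients of $\exp_{\pi(y)}$) multiplied by polynomials in $s$. The care needed is in two places. First, the cutoff region $|s|\le r_0/\sigma$ is $y$-independent once we use $\|u\|\le r_0$ in $T_{\pi(y)}M$; the far part handles the rest. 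Second, the $k\ge6$ regularity must leave three $y$-derivatives of the Taylor remainder $Q_e$ after factoring out $\sigma^3$.
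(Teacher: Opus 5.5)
Your proposal is essentially the paper's proof. The proof of Theorem~\ref{th:probability-noisy} already uses the rescaled exponential-coordinate representation at $\pi(y)$, establishes $|\partial_y^\alpha R_P|\le C_\alpha\sigma$ for $|\alpha|\le 3$, and the corollary then follows from $R=\log(1+R_P)$ exactly as in your Step~3. The one structural difference is that the paper uses a smooth cutoff $\chi_y=\rho(\exp_{\pi(y)}^{-1}(\cdot))$ instead of your sharp near/far split. That avoids the moving-boundary terms your far-part estimate omits, although those terms are exponentially small.

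The regularity point you flag is only asserted in the paper too, and it does not follow from $\exp\in C^{k-2}$ when $k=6$. Bounding three $y$-derivatives of $Q_e(\pi(y),\sigma E(y)s)/\sigma^3$ via the integral remainder uses $D_x^3D_u^3\exp$, which is six derivatives in total. A fix is to write $x=\varphi(w_0(y)+\sigma L(y)s)$ in a fixed $C^k$ chart $\varphi$, with $w_0(y)=\varphi^{-1}(\pi(y))$ and $L(y)$ orthonormalizing $D\varphi(w_0(y))$. This confines the $y$-dependence of the cubic remainder to the argument of $D^3\varphi$ and to $C^{k-1}$ coefficients, so three derivatives cost only $D^6\varphi$ and $k\ge 6$ really suffices.
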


\subsection{Gradient Expansion}
We now derive an expansion for the gradient
\[
\mathcal G_\sigma(y) = \nabla \log P_\sigma(y), \qquad y \in T(\tau-\varepsilon).
\]
To differentiate the expansion in Corollary~\ref{co:log-density} with respect to the ambient coordinate $y$, we first record the derivatives of $\pi(y)$ and $v_y$. Recall that $\nabla_\omega(\cdot)$ denotes the Euclidean directional derivative in the direction $\omega \in \mathbb{R}^D$, and $\omega^\top$ denotes the orthogonal projection of $\omega$ onto $T_{\pi(y)}M$.
\begin{lemma}\label{lem:derivative-of-v-pi}
    For every $y\in \mathcal T(\tau)$ and every $\omega\in \mathbb{R}^D$,
    \[
    \nabla_\omega\pi(y) = A_y^{-1} \omega^\top, \qquad \nabla_\omega v_y =\omega - A_y^{-1}\omega^\top.
    \]
\end{lemma}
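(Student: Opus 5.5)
The plan is to differentiate the defining identity of the nearest-point projection. For $y\in\mathcal T(\tau)$ we have $y=\pi(y)+v_y$ with $v_y\in T^\perp_{\pi(y)}M$. Since $\pi$ and $v$ are $C^{k-1}$ on $\mathcal T(\tau)$, differentiating in direction $\omega$ gives
\[
\omega=\nabla_\omega\pi(y)+\nabla_\omega v_y .
\]
Two observations will then determine $\nabla_\omega\pi(y)$. First, $\pi$ takes values in $M$, so $\nabla_\omega\pi(y)\in T_{\pi(y)}M$. Second, $v_y$ remains normal at $\pi(y)$, and we will differentiate this normality condition.

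To differentiate normality, fix any smooth local tangent vector field $X$ on $M$ near $x_0=\pi(y)$. Set $\gamma(t)=\pi(y+t\omega)$, so that $\gamma(0)=x_0$ and $\gamma'(0)=\nabla_\omega\pi(y)=:w\in T_{x_0}M$. The identity $\langle v_{y+t\omega},X(\gamma(t))\rangle=0$ holds for all small $t$. Differentiating at $t=0$ gives
\[
\langle \nabla_\omega v_y, X(x_0)\rangle+\langle v_y,\nabla_w X\rangle=0 .
\]
Since $v_y$ is normal, $\langle v_y,\nabla_wX\rangle=\langle v_y,(\nabla_wX)^\perp\rangle=\langle v_y,\Pi_{x_0}(w,X(x_0))\rangle$ by the definition of the second fundamental form. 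Writing $u=X(x_0)$, which is an arbitrary tangent vector, we obtain
\[
\langle\nabla_\omega v_y,u\rangle=-\langle v_y,\Pi_{x_0}(w,u)\rangle\qquad\text{for all } u\in T_{x_0}M .
\]

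Next, take the inner product of $\omega=w+\nabla_\omega v_y$ with $u\in T_{x_0}M$:
\[
\langle\omega^\top,u\rangle=\langle w,u\rangle-\langle v_y,\Pi_{x_0}(w,u)\rangle=\langle A_yw,u\rangle .
\]
This gives $A_yw=\omega^\top$. It remains to show that $A_y$ is invertible. By Lemma~\ref{lem:second-fundamental-form-bound}, $|\langle v_y,\Pi(u,u)\rangle|\le\|v_y\|\tau^{-1}\|u\|^2<\|u\|^2$ because $\|v_y\|<\tau$. Hence $A_y$ is positive definite, so $w=A_y^{-1}\omega^\top$. The formula $\nabla_\omega v_y=\omega-A_y^{-1}\omega^\top$ then follows from the first identity.

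The only step needing care is differentiating the normality condition. To keep it clean, I will use an extension $X$ of the tangent vector $u$ and the chain rule along $\gamma$. This requires only $C^1$ regularity of $\pi$, which the tubular neighborhood theorem supplies.
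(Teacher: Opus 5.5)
Your proposal is correct and follows essentially the same route as the paper. Both differentiate $y=\pi(y)+v_y$ together with the normality condition $\langle v,Y\rangle=0$ along a curve to obtain $A_y\nabla_\omega\pi(y)=\omega^\top$, and then invert $A_y$ using the reach bound $\|\Pi\|_{\mathrm{op}}\le\tau^{-1}$. The only difference is cosmetic: the paper phrases the middle step through the shape operator $S_{v_y}$, while you work with $\Pi$ directly and prove positive definiteness rather than just invertibility.
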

The proof is based on the characterization of $A_y$ in terms of the second fundamental form. Combining Lemma~\ref{lem:derivative-of-v-pi} with the log-density expansion in Corollary~\ref{co:log-density}, we obtain the following expression for the gradient.
\begin{theorem}\label{th:gradient-noisy}
Uniformly for $y\in T(\tau-\varepsilon)$ and $0<\sigma\le\sigma_0$,
    \begin{align}\label{eq:gradient}
        \mathcal G_\sigma(y) = -\frac{v_y}{\sigma^2}+\frac{1}{2}dH_{\pi(y)}+O(\|v_y\|+\sigma),
    \end{align}
    where the $O(\cdot)$ term is taken in the Euclidean norm.
\end{theorem}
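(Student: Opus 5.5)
The plan is to differentiate the log-density expansion of Corollary~\ref{co:log-density} term by term in an arbitrary direction $\omega\in\mathbb R^D$. The constant term drops out. For the remainder, Corollary~\ref{co:log-density} gives $\|\nabla_y R(y,\sigma)\|\le C_1\sigma$ directly, which already sits inside $O(\|v_y\|+\sigma)$. Two terms remain: the quadratic Gaussian term $-\|v_y\|^2/(2\sigma^2)$ and the curvature term $-\frac12\log\det A_y$.

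\emph{Quadratic term.} By Lemma~\ref{lem:derivative-of-v-pi},
\[
\nabla_\omega\Big(-\frac{\|v_y\|^2}{2\sigma^2}\Big)
=-\frac{1}{\sigma^2}\langle v_y,\omega-A_y^{-1}\omega^\top\rangle
=-\frac{\langle v_y,\omega\rangle}{\sigma^2}.
\]
The last equality holds because $A_y^{-1}\omega^\top\in T_{\pi(y)}M$ while $v_y\in T^\perp_{\pi(y)}M$. This term therefore contributes exactly $-v_y/\sigma^2$, with no error. This cancellation is the key point: without it, a $\sigma^{-2}$-amplified error would appear.

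\emph{Curvature term.} I use Jacobi's formula,
\[
\nabla_\omega\log\det A_y=\operatorname{Tr}\big(A_y^{-1}\nabla_\omega A_y\big).
\]
To make sense of $\nabla_\omega A_y$, I view $A_y$ as the symmetric $D\times D$ matrix $P_T(y)-\langle v_y,\Pi_{\pi(y)}(P_T(y)\cdot,P_T(y)\cdot)\rangle$, restricted to $T_{\pi(y)}M$. Since $\|\Pi\|_{\mathrm{op}}\le\tau^{-1}$ (Lemma~\ref{lem:second-fundamental-form-bound}) and $\|v_y\|\le\tau-\varepsilon$, $A_y$ is uniformly invertible on $T(\tau-\varepsilon)$, and $A_y^{-1}=I+O(\|v_y\|)$.

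Differentiating produces two kinds of terms. Those in which the derivative hits $\pi(y)$ (through $\Pi_{\pi(y)}$ or $P_T$) carry a factor $v_y$, so they are $O(\|v_y\|)$ by the $C^3$ regularity and compactness. The remaining term is the one in which the derivative hits $v_y$:
\[
-\langle\nabla_\omega v_y,\Pi_{\pi(y)}\rangle
=-\langle\omega-A_y^{-1}\omega^\top,\Pi_{\pi(y)}\rangle
=-\langle\omega^\perp,\Pi_{\pi(y)}\rangle,
\]
using that $\Pi$ is normal-valued. Taking the trace against $A_y^{-1}=I+O(\|v_y\|)$ gives
\[
\operatorname{Tr}\big(A_y^{-1}\nabla_\omega A_y\big)=-\langle\omega,\operatorname{Tr}\Pi_{\pi(y)}\rangle+O(\|v_y\|)=-d\,\langle\omega,H_{\pi(y)}\rangle+O(\|v_y\|).
\]
Hence $-\frac12\nabla\log\det A_y=\frac12 dH_{\pi(y)}+O(\|v_y\|)$. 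Summing the three contributions gives~\eqref{eq:gradient}, with constants uniform in $y\in T(\tau-\varepsilon)$ and $\sigma\le\sigma_0$.

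\emph{Main obstacle.} The delicate step is the derivative of $\log\det A_y$. Here $A_y$ acts on a moving tangent space, so the computation must be done in the fixed ambient space via $P_T(y)$ without introducing spurious terms, and any terms from $\nabla_\omega P_T$ must be checked to be $O(\|v_y\|)$. I would handle this by writing $\det A_y=\det\big(I_D-\langle v_y,\Pi_{\pi(y)}(P_T\cdot,P_T\cdot)\rangle\big)$ as an ambient $D\times D$ determinant. This expression equals $\det A_y$ because the added block is the identity on the normal space. The derivative of $P_T$ then always appears multiplied by $v_y$. All other steps follow directly from Lemma~\ref{lem:derivative-of-v-pi} and Corollary~\ref{co:log-density}.
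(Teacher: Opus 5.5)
Your proof is correct. Its skeleton matches the paper's: differentiate Corollary~\ref{co:log-density} term by term, use Lemma~\ref{lem:derivative-of-v-pi} to get $\nabla\|v_y\|^2=2v_y$ exactly, and bound $\nabla R$ by $C_1\sigma$. The difference lies in how the curvature term $F(y)=-\frac12\log\det A_y$ is handled.

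\emph{Your route.} You compute $\nabla F(y)$ directly at every $y$ by Jacobi's formula. The ambient determinant $\det(I_D-B(y))$, with $B(y)=\langle v_y,\Pi_{\pi(y)}(P_T(y)\cdot,P_T(y)\cdot)\rangle$, removes the moving-tangent-space issue. You then split terms into the main term $-\langle\omega^\perp,\Pi_{\pi(y)}\rangle$ and explicit $O(\|v_y\|)$ errors.

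\emph{The paper's route.} It computes $\nabla F$ only on $M$.
\begin{itemize}
\item Since $A_x=I$ for $x\in M$, $F|_M\equiv0$, so all tangential derivatives of $F$ vanish there.
\item Along a normal segment $x+tu$ the projection is constant, so $A_{x+tu}=I-tS_u$ acts on the fixed space $T_xM$. A one-variable computation gives $\nabla F(x)=\frac d2H_x$.
\item Lipschitz continuity of $\nabla F$ along the normal segment from $\pi(y)$ to $y$ then gives $\nabla F(y)=\frac d2H_{\pi(y)}+O(\|v_y\|)$.
\end{itemize}

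\emph{Comparison.} The paper's argument sidesteps exactly the obstacle you flagged. It never differentiates $\Pi$ or $P_T$ and never embeds $A_y$ in an ambient matrix. Your argument is more computational, but it gives an explicit formula for $\nabla F(y)$ off $M$. It also pinpoints which terms make up the $O(\|v_y\|)$ error: the derivatives landing on $\pi(y)$ through $\Pi$ and $P_T$, and the correction $(I_D-B)^{-1}-I_D=(I_D-B)^{-1}B$.

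Your route also needs slightly less regularity. It uses one derivative of $x\mapsto\Pi_x$ and $x\mapsto P_T(x)$. The paper's Lipschitz step uses a bound on $\nabla^2F$, which costs one more derivative. Under the standing assumption $k\ge 6$ this difference is immaterial.
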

The leading term $-v_y/\sigma^2$ is purely normal and comes from the Gaussian convolution, while the first curvature correction is given by the mean curvature vector. In particular, for $x\in M$,
\[
\mathcal G_\sigma(x)=\frac d2 H_x+O(\sigma).
\]
Away from $M$, the gradient is asymptotically aligned with the normal direction.

\subsection{Hessian Expansion}
We next study the Hessian of the log-density:
\[
\mathcal{H}_\sigma(y) = \nabla^2\log P _\sigma(y) = \nabla \mathcal G_\sigma(y).
\]
Using the gradient expansion in Theorem~\ref{th:gradient-noisy} together with Lemma~\ref{lem:derivative-of-v-pi}, we obtain the following description of $\mathcal{H}_\sigma(y)$ in $T(\tau-\varepsilon)$.

\begin{theorem}\label{th:noisy-hessian}
Uniformly for $y\in T(\tau-\varepsilon)$ and $0<\sigma\le\sigma_0$,
\[
\mathcal{H}_\sigma(y)
= - \frac{1}{\sigma^2}P_N(y)
- \frac{1}{\sigma^2}(I_{T_{\pi(y)}M} - A_y^{-1})P_T(y)
+ O(1),
\]
where the $O(1)$ term is taken in the operator norm.
\end{theorem}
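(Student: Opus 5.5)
Differentiating the expansion of Corollary~\ref{co:log-density} twice is the plan. This beats differentiating the gradient expansion of Theorem~\ref{th:gradient-noisy}, whose $O(\|v_y\|+\sigma)$ remainder comes with no derivative control. Write
\[
\log P_\sigma(y)=c_\sigma-\frac{\|v_y\|^2}{2\sigma^2}-\frac12\log\det A_y+R(y,\sigma).
\]
The constant $c_\sigma$ drops out. Since $y\mapsto A_y$ is $C^3$ on the compact set $T(\tau-\varepsilon)$ and $A_y$ is uniformly invertible there, the Hessian of $-\frac12\log\det A_y$ is $O(1)$ uniformly. By Corollary~\ref{co:log-density}, $\|\nabla^2R\|_{\mathrm{op}}\le C_2\sigma=O(1)$. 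So the whole matter reduces to computing $\nabla^2\big(\|v_y\|^2/2\big)$ exactly, or at least up to $O(\sigma^2)$ in operator norm.

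By Lemma~\ref{lem:derivative-of-v-pi}, $\nabla_\omega\frac12\|v_y\|^2=\langle v_y,\omega-A_y^{-1}\omega^\top\rangle$. Since $v_y$ is normal at $\pi(y)$ and $A_y^{-1}\omega^\top$ is tangent, this equals $\langle v_y,\omega\rangle$. Hence $\nabla\frac12\|v_y\|^2=v_y$, which is the familiar fact that the gradient of half the squared distance is $y-\pi(y)$. Differentiating once more with Lemma~\ref{lem:derivative-of-v-pi} gives
\[
\nabla^2\tfrac12\|v_y\|^2=I-A_y^{-1}P_T(y)=P_N(y)+(I_{T_{\pi(y)}M}-A_y^{-1})P_T(y),
\]
where $A_y^{-1}$ is extended by zero on the normal space and we use $I=P_N+P_T$. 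Dividing by $-\sigma^2$ and adding the $O(1)$ contributions yields exactly the claimed expansion, uniformly for $y\in T(\tau-\varepsilon)$ and $0<\sigma\le\sigma_0$.

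The only step needing care is the uniform $O(1)$ bound on $\nabla^2\log\det A_y$. I would get it from the Jacobi formula $\nabla_\omega\log\det A_y=\operatorname{tr}(A_y^{-1}\nabla_\omega A_y)$, together with compactness and a uniform positive lower bound on the eigenvalues of $A_y$. That lower bound follows because $\|v_y\|\le\tau-\varepsilon$ and $\|\Pi\|_{\mathrm{op}}\le\tau^{-1}$ by Lemma~\ref{lem:second-fundamental-form-bound}, so $A_y\succeq(\varepsilon/\tau)I$. One must also note that $A_y$ acts on the varying space $T_{\pi(y)}M$. To handle this, I would regard it as the ambient operator $P_T(y)-\langle v_y,\Pi_{\pi(y)}(P_T\cdot,P_T\cdot)\rangle$ restricted to the tangent space, or equivalently $\det A_y=\det(P_N+A_yP_T)$ on $\mathbb R^D$. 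This is $C^3$ in $y$ and bounded away from zero, so its log has bounded second derivatives.
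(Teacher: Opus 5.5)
Your proposal is correct and follows essentially the same route as the paper: it differentiates the log-density expansion of Corollary~\ref{co:log-density} twice and splits the result into three terms. The squared-distance term is computed exactly via $\nabla v_y = I_D - A_y^{-1}P_T(y)$; the Hessian of $-\frac12\log\det A_y$ is $O(1)$ by $C^3$ regularity and uniform invertibility; and $\|\nabla^2 R\|_{\mathrm{op}}\le C\sigma$. Your remark on handling $A_y$ on the varying tangent space through $\det(P_N + A_yP_T)$ is a small detail the paper passes over.
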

Here $P_T(y)$ and $P_N(y)$ are orthogonal projections as in Section~\ref{sec:prelim}.
Define the leading-order operator $\mathcal{H}_0(y) = - \frac{1}{\sigma^2}P_N(y) - \frac{1}{\sigma^2}(I_{T_{\pi(y)}M} - A_y^{-1})P_T(y)$. We next record a corollary on the spectral gap of $\mathcal H_0$ and $\mathcal H_\sigma$.
\begin{corollary}\label{co:noisy-hessian-gap}
For each fixed $\varepsilon\in(0,\tau)$, there exist constants $c_\varepsilon>0$ and $\sigma_0>0$ such that the following holds. Let $\lambda_1(y,\sigma)\ge \cdots \ge \lambda_D(y,\sigma)$
be the eigenvalues of $\mathcal H_\sigma(y)$. Then, for every $y\in T(\tau-\varepsilon)$ and every $0<\sigma\le \sigma_0$,
\[
\lambda_d(y,\sigma)-\lambda_{d+1}(y,\sigma)\ge c_\varepsilon \sigma^{-2}.
\]
The same spectral gap holds for $\mathcal H_0(y)$.
\end{corollary}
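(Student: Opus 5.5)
The plan is to read off the spectrum of $\mathcal H_0(y)$ exactly and then pass to $\mathcal H_\sigma(y)$ using Theorem~\ref{th:noisy-hessian} together with Weyl's inequality.

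\textbf{Step 1: spectrum of $\mathcal H_0(y)$.} The operator $\mathcal H_0(y)$ is block diagonal with respect to $\mathbb R^D=T_{\pi(y)}M\oplus T^\perp_{\pi(y)}M$.
\begin{itemize}
\item On the normal block it equals $-\sigma^{-2}I$, so it contributes $D-d$ eigenvalues equal to $-\sigma^{-2}$.
\item On the tangent block it equals $-\sigma^{-2}(I-A_y^{-1})$. The map $A_y$ is self-adjoint. Diagonalize it with eigenvalues $\mu_1,\dots,\mu_d$; the tangent eigenvalues of $\mathcal H_0$ are then $-\sigma^{-2}(1-\mu_i^{-1})$.
\end{itemize}
The point is that these tangent eigenvalues stay uniformly above $-\sigma^{-2}$.

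\textbf{Step 2: bounds on the $\mu_i$.} By Lemma~\ref{lem:second-fundamental-form-bound}, $|\langle v_y,\Pi_{\pi(y)}(u,u)\rangle|\le \|v_y\|\tau^{-1}\|u\|^2$. For $y\in T(\tau-\varepsilon)$ this gives
\[
\mu_i\in[\varepsilon/\tau,\;2-\varepsilon/\tau].
\]
Hence $\mu_i^{-1}\ge (2-\varepsilon/\tau)^{-1}$, and
\[
1-\mu_i^{-1}\le 1-(2-\varepsilon/\tau)^{-1}<1.
\]
The gap between each tangent eigenvalue and the normal eigenvalue is $\sigma^{-2}\mu_i^{-1}$, so
\[
\sigma^{-2}\mu_i^{-1}\ge \sigma^{-2}(2-\varepsilon/\tau)^{-1}=:2c_\varepsilon\sigma^{-2}.
\]
Since the normal eigenvalue $-\sigma^{-2}$ is the minimal possible value, the $d$ largest eigenvalues of $\mathcal H_0$ are the tangent ones. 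Therefore $\lambda_d(\mathcal H_0)-\lambda_{d+1}(\mathcal H_0)\ge 2c_\varepsilon\sigma^{-2}$, which is the claim for $\mathcal H_0$ (with $2c_\varepsilon$, hence also $c_\varepsilon$).

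\textbf{Step 3: transfer to $\mathcal H_\sigma(y)$.} Theorem~\ref{th:noisy-hessian} gives $\|\mathcal H_\sigma(y)-\mathcal H_0(y)\|_{\mathrm{op}}\le C$ uniformly over $y$ and $\sigma$. Both operators are symmetric, $\mathcal H_\sigma$ being a Hessian. Weyl's inequality moves each ordered eigenvalue by at most $C$, so
\[
\lambda_d(y,\sigma)-\lambda_{d+1}(y,\sigma)\ge 2c_\varepsilon\sigma^{-2}-2C\ge c_\varepsilon\sigma^{-2},
\]
once $\sigma_0^2\le c_\varepsilon/(2C)$.

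\textbf{Main obstacle.} The only delicate point is that the bound on $\mu_i$ uses $\|v_y\|\le\tau-\varepsilon$ strictly inside the reach, which makes $c_\varepsilon$ degenerate as $\varepsilon\to0$. The lower bound $\mu_i\ge\varepsilon/\tau$ is also needed, to ensure $A_y$ is invertible so that $\mathcal H_0$ is well defined. Once uniformity of the $O(1)$ term in Theorem~\ref{th:noisy-hessian} is granted, the rest is routine.
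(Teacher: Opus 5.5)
Your proposal is correct and follows the paper's proof essentially step for step: you read off the block-diagonal spectrum of $\mathcal H_0(y)$, use the reach bound to get $\mu_i\le 2-\varepsilon/\tau$ and hence a tangent--normal gap of at least $\sigma^{-2}(2-\varepsilon/\tau)^{-1}$, and then transfer this to $\mathcal H_\sigma(y)$ by Weyl's inequality with the uniform $O(1)$ remainder from Theorem~\ref{th:noisy-hessian}, shrinking $\sigma_0$. One small correction to your closing remark: the gap constant $(2-\varepsilon/\tau)^{-1}$ does not degenerate as $\varepsilon\to0$ (it tends to $1/2$); what degenerates is the lower bound $\mu_i\ge\varepsilon/\tau$, and through it the constant $C$ in Theorem~\ref{th:noisy-hessian} and hence $\sigma_0$.
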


\section{Population Geometric Estimators from Density Derivatives}\label{sec:compute-geometry}
In this section, we develop population-level identification formulas and estimators for geometric quantities of the embedded manifold $M$ from derivatives of the noisy log-density $\log P_\sigma$. At this stage we assume access to the gradient $\mathcal G_\sigma = \nabla \log P_\sigma$, the Hessian $\mathcal H_\sigma = \nabla^2 \log P_\sigma$, and the third-order derivative $\mathcal T_\sigma = \nabla^3 \log P_\sigma$. Fix $\varepsilon \in (0,\tau)$ and let $\sigma_0>0$ be as in Section~\ref{sec:density-and-derivatives}; throughout this section we consider $y\in T(\tau-\varepsilon)$ and $0<\sigma\le \sigma_0$, so that the expansions of Theorems~\ref{th:probability-noisy}, \ref{th:gradient-noisy}, \ref{th:noisy-hessian}, Corollary~\ref{co:log-density}, and Corollary~\ref{co:noisy-hessian-gap} apply.

Our results reveal a simple hierarchy in the geometric information encoded by density derivatives. Second-order derivatives identify tangent spaces and the intrinsic dimension by a spectral gap. For totally umbilical submanifolds, the second fundamental form collapses to the mean curvature vector, so first-order derivatives already suffice. For hypersurfaces, first- and second-order derivatives recover the second fundamental form through an estimated normal direction. In arbitrary codimension, recovering the full second fundamental form requires differentiating the tangent projector and therefore involves third-order derivatives. We present the constructions in this order of increasing geometric complexity.

The second fundamental form estimators in this section are population-level constructions. They are defined as bilinear forms on the true tangent space $T_{\pi(y)}M$, and are analyzed relative to the corresponding population target. The sample-level counterparts are obtained in Section~\ref{sec:sample-level-estimator} by replacing $\mathcal G_\sigma$, $\mathcal H_\sigma$, and $\mathcal T_\sigma$ with plug-in estimators based on noisy observations, together with estimated tangent spaces when needed.

Proofs of theorems and corollaries are given in Section~\ref{sec:pf-geometry}; proofs of lemmas are given in Appendix~\ref{app:lemma-sec4}.

\subsection{Tangent Spaces and the Intrinsic Dimension Estimation}
By Theorem~\ref{th:noisy-hessian} and Corollary~\ref{co:noisy-hessian-gap}, the $d$ tangential eigenvalues of the Hessian $\mathcal H_\sigma(y)$ are separated from the remaining $D-d$ normal eigenvalues by a spectral gap of order $\sigma^{-2}$.
It is therefore natural to estimate the tangent space at $\pi(y)$ by the eigenvectors associated with the $d$ largest eigenvalues, and to estimate the intrinsic dimension $d$ by the location of the largest spectral gap.

Formally, let $\lambda_1 \geq \lambda_2 \geq \dots \geq \lambda_D$ be the eigenvalues of $\mathcal{H}_\sigma(y)$ and let $e_1,\dots,e_D$ be the associated orthonormal eigenvectors. If the intrinsic dimension $d$ is known, we define the tangent space estimator by
\[
\widehat{T}_{y}M = \mathrm{span}\{e_1,\dots,e_d\}.
\]
If $d$ is unknown, we estimate it by the location of the largest consecutive spectral gap,
\[
\widehat{d} \in \arg\max_{k=1,\dots,D-1} |\lambda_k - \lambda_{k+1}|.
\]
The following theorem shows that the tangent space estimator is consistent when $d$ is known, and that the spectral-gap estimator recovers the correct intrinsic dimension in the small-noise regime.
\begin{theorem}\label{th:tangent-space}
Let $\widehat T_yM$ be the eigenspace associated with the $d$ largest eigenvalues of $\mathcal H_\sigma(y)$. Then there exist constants $C$ and $\sigma_0>0$ such that, for every $y\in T(\tau-\varepsilon)$ and every $0<\sigma\le\sigma_0$,
\[
\sin\{\Theta(\widehat{T}_{y}M,T_{\pi(y)}M)\} \le C\sigma^2.
\]
Moreover, for any sequence with $\|v_y\|\to0$ and $\sigma\to0$, the largest consecutive spectral gap of $\mathcal H_\sigma(y)$ is attained at $k=d$. Consequently, $\widehat d=d$ along the sequence.
\end{theorem}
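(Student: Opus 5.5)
The plan is to treat $\mathcal H_\sigma(y)$ as a perturbation of the leading operator $\mathcal H_0(y)$ from Theorem~\ref{th:noisy-hessian} and apply the Davis--Kahan $\sin\Theta$ theorem. The key structural observation is that $\mathcal H_0(y)$ is exactly block-diagonal with respect to the splitting $\mathbb R^D=T_{\pi(y)}M\oplus T^\perp_{\pi(y)}M$. It acts as $-\sigma^{-2}I$ on the normal space and as $-\sigma^{-2}(I-A_y^{-1})$ on the tangent space, where $A_y$ is self-adjoint on $T_{\pi(y)}M$. Hence $T_{\pi(y)}M$ is an exact invariant subspace of $\mathcal H_0(y)$, spanned by eigenvectors.

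First I identify which eigenvalues of $\mathcal H_0(y)$ belong to $T_{\pi(y)}M$. On $T(\tau-\varepsilon)$ we have $\|v_y\|\le\tau-\varepsilon$, and Lemma~\ref{lem:second-fundamental-form-bound} gives $\|\langle v_y,\Pi\rangle\|_{\mathrm{op}}\le 1-\varepsilon/\tau$. So the eigenvalues of $A_y$ lie in $[\varepsilon/\tau,\,2-\varepsilon/\tau]$, and those of $A_y^{-1}$ lie in $[(2-\varepsilon/\tau)^{-1},\tau/\varepsilon]$. The tangential eigenvalues $-\sigma^{-2}(1-\mu)$, with $\mu$ an eigenvalue of $A_y^{-1}$, therefore satisfy $-\sigma^{-2}(1-\mu)\ge -\sigma^{-2}\bigl(1-(2-\varepsilon/\tau)^{-1}\bigr)$. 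This is strictly above $-\sigma^{-2}$ by a margin $c_\varepsilon\sigma^{-2}$ with $c_\varepsilon=(2-\varepsilon/\tau)^{-1}>0$. Thus the top $d$ eigenvectors of $\mathcal H_0$ span exactly $T_{\pi(y)}M$, with gap $\ge c_\varepsilon\sigma^{-2}$. This is the content of Corollary~\ref{co:noisy-hessian-gap}, which I can simply invoke.

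Next I apply Davis--Kahan to $\mathcal H_\sigma=\mathcal H_0+E$ with $\|E\|_{\mathrm{op}}\le C$ uniformly, by Theorem~\ref{th:noisy-hessian}. Using the gap of $\mathcal H_0$ (or of $\mathcal H_\sigma$, via Weyl's inequality once $\sigma_0$ is small enough that $C\ll c_\varepsilon\sigma^{-2}$) gives
\[
\sin\Theta(\widehat T_yM,T_{\pi(y)}M)\le \frac{2\|E\|_{\mathrm{op}}}{c_\varepsilon\sigma^{-2}}\le C'\sigma^2,
\]
uniformly in $y\in T(\tau-\varepsilon)$ and $0<\sigma\le\sigma_0$.

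For the dimension claim, suppose $\|v_y\|\to0$ and $\sigma\to0$. Then $\langle v_y,\Pi\rangle\to0$ uniformly, so $A_y^{-1}=I+O(\|v_y\|)$ and the tangential eigenvalues of $\mathcal H_0$ are $O(\|v_y\|\sigma^{-2})$. The normal eigenvalues all equal $-\sigma^{-2}$. By Weyl's inequality the eigenvalues of $\mathcal H_\sigma$ are within $O(1)$ of these. Multiplying by $\sigma^2$, the rescaled spectrum has its top $d$ values at $o(1)$ and the remaining $D-d$ values at $-1+o(1)$. Hence $\sigma^2|\lambda_d-\lambda_{d+1}|\to1$, while every other consecutive gap is $o(1)$. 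So the argmax is eventually $k=d$, and $\widehat d=d$ along the sequence.

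The main obstacle is the correct ordering of the tangential block relative to $-\sigma^{-2}$ when $\|v_y\|$ is not small. Curvature in the direction of $v_y$ can make $A_y$ have eigenvalues near $2$ or near $0$, so I must check carefully that $1-\mu<1$ holds uniformly. The sign works because $\mu>0$, which follows from $A_y$ being positive definite on $T(\tau-\varepsilon)$. Everything else is standard perturbation theory.
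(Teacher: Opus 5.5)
Your proposal is correct and follows essentially the same route as the paper. Both arguments take the exactly block-diagonal leading operator $\mathcal H_0(y)$, whose top-$d$ eigenspace is $T_{\pi(y)}M$ with a gap of order $c_\varepsilon\sigma^{-2}$ (via the bound on $\lambda_{\max}(A_y)$), apply Davis--Kahan to the $O(1)$ perturbation to get the $O(\sigma^2)$ bound, and use Weyl's inequality with the $O(\|v_y\|\sigma^{-2})$ tangential eigenvalues to show the largest consecutive gap sits at $k=d$.
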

We note that \citet{Stanczuk2024} exploits the small-noise geometry of the gradient field $\nabla \log P_\sigma$ to estimate tangent spaces and the intrinsic dimension. Relying on the property that $\nabla \log P_\sigma$ is asymptotically normal, they propose to sample enough gradient vector in a small neighborhood of a point so as to span the normal space. Then the spectrum of this stacked gradient matrix tells tangent spaces and the intrinsic dimension.
In contrast, our estimators are based on the local spectral structure of the Hessian, which enables recovery from pointwise derivative information.

\subsection{Second Fundamental Form for Totally Umbilical Submanifolds}
We now turn to the estimation of the second fundamental form. We begin with the class of totally umbilical submanifolds, characterized by
\[
\Pi(u,v) = g(u,v)H, 
\]
for all tangent vectors $u,v$, where $H$ is the mean curvature and $g$ is the induced metric. In Euclidean space, this class consists of affine subspaces and spheres of arbitrary dimension. Following Theorem~\ref{th:gradient-noisy}, for $x\in M$ we have
\[
  \mathcal G_\sigma(x)
  = \frac{d}{2} H_x + O(\sigma).
\]
Therefore at noiseless points on the manifold the gradient recovers the mean curvature vector up to the known factor $d/2$ and an $O(\sigma)$ error. Combining this with the definition of totally umbilical submanifolds suggests the following estimator.
\begin{theorem}\label{th:umbilical-submanifold}
Assume $M$ is totally umbilical. For $x\in M,\ u,v \in T_xM$, define
\[
\widehat{\Pi}_x(u,v) = \frac{2}{d}g(u,v)\mathcal G_\sigma(x).
\]
Then there exist constants $C$ and $\sigma_0>0$ such that, for every $x\in M$ and $0<\sigma\le\sigma_0$,
\[
\|\widehat{\Pi}_x-\Pi_x\|_{\mathrm{op}} \le C \sigma.
\]
\end{theorem}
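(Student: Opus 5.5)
The plan is to apply Theorem~\ref{th:gradient-noisy} at points of $M$ itself and then use the umbilicity identity. For $x\in M$ we have $\pi(x)=x$ and $v_x=0$. Because $x\in M\subset T(\tau-\varepsilon)$, the expansion \eqref{eq:gradient} holds uniformly there, and it becomes
\[
\mathcal G_\sigma(x)=\frac d2 H_x+E_\sigma(x),\qquad \|E_\sigma(x)\|\le C_1\sigma,
\]
where $C_1$ is the uniform constant from Theorem~\ref{th:gradient-noisy} and $\sigma_0$ is taken from that theorem. Dividing by $d/2$ gives $\frac{2}{d}\mathcal G_\sigma(x)=H_x+\frac{2}{d}E_\sigma(x)$.

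Next, total umbilicity gives $\Pi_x(u,v)=g(u,v)H_x$ for $u,v\in T_xM$. Therefore
\[
\widehat\Pi_x(u,v)-\Pi_x(u,v)=g(u,v)\Big(\tfrac{2}{d}\mathcal G_\sigma(x)-H_x\Big)=\tfrac{2}{d}\,g(u,v)\,E_\sigma(x).
\]
For unit $u,v$, Cauchy--Schwarz gives $|g(u,v)|\le1$. Taking the supremum over unit inputs then yields $\|\widehat\Pi_x-\Pi_x\|_{\mathrm{op}}\le \frac{2}{d}C_1\sigma$, so we may set $C=2C_1/d$.

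One point needs care. $\widehat\Pi_x$ takes values in $\mathbb R^D$ and not necessarily in $T_x^\perp M$, so the operator norm should be read as the norm of a $\mathbb R^D$-valued bilinear map. The bound above is already stated in the Euclidean norm, so this causes no trouble.

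The only real work is justifying the uniform $O(\sigma)$ remainder at $v_x=0$, and that is inherited from Theorem~\ref{th:gradient-noisy}. There, the $O(\|v_y\|+\sigma)$ term comes from the derivative bound $\|\nabla R\|\le C_1\sigma$ in Corollary~\ref{co:log-density} together with the expansion of $-\frac12\nabla\log\det A_y$. The latter produces $\frac d2 H$ plus $O(\|v_y\|)$ terms, and these vanish on $M$.
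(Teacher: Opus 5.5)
Your proposal is correct and follows the paper's own proof essentially line for line. You specialize Theorem~\ref{th:gradient-noisy} to $v_x=0$ to get $\mathcal G_\sigma(x)=\frac d2H_x+O(\sigma)$, substitute $\Pi_x(u,v)=g(u,v)H_x$, and bound $|g(u,v)|\le1$ on unit inputs, which gives $C=2C_1/d$.
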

Due to the presence of the leading normal term $-v_y/\sigma^2$ in the gradient expansion (Theorem~\ref{th:gradient-noisy}), this estimator cannot be directly generalized from $x\in M$ to noisy points $y\in T(\tau-\varepsilon)$ uniformly in $\sigma$. Unless $\|v_y\|$ is of order $o(\sigma^2)$, the Gaussian term $-v_y/\sigma^2$ dominates the curvature contribution and prevents a stable recovery of $\Pi_{\pi(y)}$ from the gradient alone.

\subsection{Second Fundamental Form for Hypersurfaces}\label{sec:Pi-hypersurface construction}
We next consider hypersurfaces, i.e.,\ submanifolds of codimension one ($D =d+1$). In this setting, the second fundamental form can be reconstructed from a normal vector field. 
\begin{lemma}\label{lem:extended-normal-vector-field}
Let $M$ be a hypersurface, and let $\mathcal N$ be a $C^1$ vector field defined on $\mathcal T(\tau)$ such that $\mathcal N(y) \in T_{\pi(y)}^\perp M$, $\mathcal N(y)\ne 0$ for all $y\in \mathcal T(\tau)$. Then, for every $y\in\mathcal T(\tau)$ and $u,v\in T_{\pi(y)}M$,
\[
\Pi_{\pi(y)}(u,v) = -\frac{1}{\|\mathcal N(y)\|^2}\big\langle \nabla_{A_y u}\mathcal N(y), v \big\rangle \mathcal N(y).
\]
\end{lemma}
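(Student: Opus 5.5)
The plan is to reduce everything to a unit normal field along the fibers of $\pi$ and then use the Weingarten equation at $\pi(y)$. Since $M$ has codimension one and $\mathcal N(y)\in T^\perp_{\pi(y)}M$ is nonzero, we can write $\mathcal N(y)=f(y)\,n(\pi(y))$. Here $n$ is a local $C^{k-1}$ unit normal field on $M$, chosen near $\pi(y)$ so that $\mathcal N(y)$ points along it, and $f(y)=\langle \mathcal N(y),n(\pi(y))\rangle$ is a $C^1$ scalar with $f(y)^2=\|\mathcal N(y)\|^2$. Locally a consistent choice of $n$ always exists, so orientability of $M$ is not needed.

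For $\omega\in\mathbb R^D$, the product rule together with the chain rule and Lemma~\ref{lem:derivative-of-v-pi} gives
\[
\nabla_\omega \mathcal N(y) = (\nabla_\omega f)(y)\, n(\pi(y)) + f(y)\, dn_{\pi(y)}\big(A_y^{-1}\omega^\top\big).
\]
Now take $\omega = A_y u$ with $u\in T_{\pi(y)}M$. Then $\omega$ is already tangent, so $\omega^\top=\omega$ and $A_y^{-1}\omega^\top = u$. The factor $A_y$ in the statement is there precisely to cancel the Jacobian of $\pi$. Pairing with $v\in T_{\pi(y)}M$ removes the first term, since $n\perp v$, and leaves
\[
\langle \nabla_{A_yu}\mathcal N(y), v\rangle = f(y)\,\langle dn_{\pi(y)}(u), v\rangle .
\]

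Next apply the Weingarten relation at $x=\pi(y)$. Differentiating $\langle n, V\rangle=0$ along $M$ for a tangent field $V$ extending $v$ gives
\[
\langle dn_x(u),v\rangle = -\langle n,\nabla_u V\rangle = -\langle n,\Pi_x(u,v)\rangle .
\]
Because the normal space is one-dimensional, $\Pi_x(u,v)=\langle \Pi_x(u,v),n\rangle n$. Combining these,
\[
-\frac{1}{\|\mathcal N\|^2}\langle \nabla_{A_yu}\mathcal N, v\rangle\,\mathcal N = -\frac{f}{f^2}\big(-\langle n,\Pi_x(u,v)\rangle\big)\, f\, n = \Pi_x(u,v),
\]
which is the claimed identity.

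The only delicate point is the chain-rule step, where we need $\nabla_\omega(n\circ\pi)=dn_{\pi(y)}(\nabla_\omega\pi(y))$ and have to identify $\nabla_\omega\pi(y)$ correctly through Lemma~\ref{lem:derivative-of-v-pi}. This requires $A_y$ to be invertible on $\mathcal T(\tau)$, which the lemma already presupposes. It also requires that $n\circ\pi$ and $f$ are $C^1$, which follows from the regularity of $\pi$ and $\mathcal N$.
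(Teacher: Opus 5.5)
Your proof is correct and follows essentially the same route as the paper. Both arguments differentiate the orthogonality of the normal field against tangent vectors (the Weingarten relation) in the direction $A_y u$, which Lemma~\ref{lem:derivative-of-v-pi} shows is sent by $d\pi$ to $u$, and then use that the normal space is one-dimensional. The difference is only bookkeeping: you factor $\mathcal N = f\,(n\circ\pi)$ and apply the chain rule, whereas the paper differentiates $\langle \mathcal N(y(t)),Y(t)\rangle=0$ along a lifted curve with $\pi(y(t))=\gamma(t)$ and $\dot y(0)=A_yu$.
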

Here $A_y$ is the tangent endomorphism introduced in Section~\ref{sec:density-and-derivatives}. In particular, $A_yu = u - (\nabla_uv_y)^\top$.
According to Theorem~\ref{th:gradient-noisy}, the tangential component of the gradient
$\mathcal G_\sigma=\nabla\log P_\sigma$ is of lower order than its normal component near
$M$, so $\mathcal G_\sigma$ provides a natural approximate normal field. This suggests
replacing the true normal field in Lemma~\ref{lem:extended-normal-vector-field} by
$\mathcal G_\sigma$. Moreover, using the definition of the Hessian $\mathcal{H}_\sigma(u,v):=\langle\nabla_u \nabla \log P_\sigma, v\rangle$, we give the following theorem.
\begin{theorem}\label{th:hypersurface}
Assume $M\subset\mathbb{R}^D$ is a hypersurface. Let $\{U_\sigma\}_{0<\sigma\le\sigma_0}$ be a family of subsets of $T(\tau-\varepsilon)$ such that
\[
\inf_{y\in U_\sigma}\|\mathcal G_\sigma(y)\|\ge c_0>0
\qquad\text{for every }0<\sigma\le\sigma_0.
\]
For $y\in U_\sigma,\ u,v\in T_{\pi(y)}M$, define
\[
\widehat{\Pi}_y(u,v) = -\frac{1}{\|\mathcal G_\sigma(y)\|^2}\big\langle\mathcal{H}_\sigma(y)u, v\big\rangle \mathcal G_\sigma(y).
\]
Then there exists a constant $C$ such that, for every $y\in U_\sigma$ and $0<\sigma\le\sigma_0$,   
\[
\|\widehat{\Pi}_y-\Pi_{\pi(y)}\|_{\mathrm{op}} \le C(\|v_y\|+\sigma).
\]
\end{theorem}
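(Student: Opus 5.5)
The plan is to bypass the leading-order Hessian expansion of Theorem~\ref{th:noisy-hessian}, whose $O(1)$ remainder is too crude when $\|v_y\|\lesssim\sigma^2$. Instead, I would compute $\langle\mathcal H_\sigma(y)u,v\rangle=\langle\nabla_u\mathcal G_\sigma(y),v\rangle$ directly from a normal/tangential splitting of the gradient, in the spirit of Lemma~\ref{lem:extended-normal-vector-field}.

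\emph{Setup.} Since $D=d+1$, choose locally a unit normal field $n$ on $M$ and extend it to the tube by $n(y):=n(\pi(y))$. Write $v_y=s(y)\,n(y)$ and $\langle\Pi_x(u,v),n\rangle=\langle S u,v\rangle$. Then $A_y=I-sS$, and Lemma~\ref{lem:derivative-of-v-pi} together with $dn=-S$ gives
\[
\nabla s=n,\qquad \nabla_u n(y)=-S A_y^{-1}u\quad (u\in T_{\pi(y)}M).
\]
Decompose $\mathcal G_\sigma(y)=\varphi(y)\,n(y)+t(y)$ with $t=P_T\mathcal G_\sigma$. Then, for tangent $u,v$,
\[
\langle\mathcal H_\sigma(y)u,v\rangle=-\varphi\,\langle SA_y^{-1}u,v\rangle+\langle\nabla_u t,v\rangle ,
\]
because $\langle n,v\rangle=0$ kills the $(\nabla_u\varphi)\,n$ term. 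The whole proof therefore reduces to showing that both $t$ and the tangential--tangential block of $\nabla t$ are $O(|s|+\sigma)$.

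\emph{Controlling $t$.} I would differentiate Corollary~\ref{co:log-density} term by term.
\begin{itemize}[leftmargin=*]
\item The term $-\|v_y\|^2/(2\sigma^2)=-s^2/(2\sigma^2)$ has gradient $-s\,n/\sigma^2$, which is purely normal and contributes nothing to $t$.
\item The term $-\tfrac12\log\det A_y=-\tfrac12\log\det(I-sS_{\pi(y)})$ equals $s\,f(y)$ with $f$ a $C^2$ function. Its gradient is $f\,n+s\nabla f$, so its tangential part is $s\,P_T\nabla f=O(|s|)$. Differentiating $sP_T\nabla f$ along tangent $u$ gives $(\nabla_u s)P_T\nabla f+s\,O(1)$, and $\nabla_u s=\langle n,u\rangle=0$. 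Hence this contributes $O(|s|)$ to the tangential--tangential block of $\nabla t$.
\item The remainder $R$ contributes $P_T\nabla R=O(\sigma)$, and its derivative is $O(\sigma)$ using $\|\nabla^2R\|_{\mathrm{op}}\le C\sigma$ and the boundedness of $\nabla P_T$.
\end{itemize}
This gives $\|t\|\le C(|s|+\sigma)$ and $|\langle\nabla_u t,v\rangle|\le C(|s|+\sigma)\|u\|\|v\|$.

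\emph{Assembling the estimate.} Using $\|\mathcal G_\sigma\|\ge c_0$ on $U_\sigma$ and $\|t\|=O(|s|+\sigma)$, I would expand
\[
\frac{\mathcal G_\sigma}{\|\mathcal G_\sigma\|^2}=\frac{\varphi n+t}{\varphi^2+\|t\|^2}.
\]
This yields
\[
\widehat\Pi_y(u,v)=\frac{\varphi^2}{\varphi^2+\|t\|^2}\langle SA_y^{-1}u,v\rangle\,n+O(|s|+\sigma).
\]
The key point is that the potentially huge factor $\varphi\sim s/\sigma^2$ appears in both numerator and denominator, so it cancels. Moreover, the $O(|s|+\sigma)$ pieces are divided only by $\|\mathcal G_\sigma\|\ge c_0$, or by $\|\mathcal G_\sigma\|^2$ against factors of size at most $\|\mathcal G_\sigma\|$.

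The same bounds give $\varphi^2/(\varphi^2+\|t\|^2)=1-\|t\|^2/\|\mathcal G_\sigma\|^2=1+O((|s|+\sigma)^2)$. Finally, $A_y^{-1}=I+O(|s|)$ and $\|S\|\le\tau^{-1}$ by Lemma~\ref{lem:second-fundamental-form-bound}. Together these give $\|\widehat\Pi_y-\Pi_{\pi(y)}\|_{\mathrm{op}}\le C(\|v_y\|+\sigma)$ with $\|v_y\|=|s|$.

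\emph{Main obstacle.} The crucial step is showing that the tangential derivative of the tangential part of $\mathcal G_\sigma$ is $O(|s|+\sigma)$ rather than $O(1)$. The $\tfrac d2H$ correction in Theorem~\ref{th:gradient-noisy} is normal, but its derivative need not be small. The argument above relies on the structure $-\tfrac12\log\det A_y=s\,f(y)$, on $\nabla_us=0$ for tangent $u$, and on the $O(\sigma)$ bounds for $\nabla^mR$ in Corollary~\ref{co:log-density}. If the sign of $n$ cannot be chosen globally, I would carry out the argument locally, since the estimator is invariant under $n\mapsto-n$.
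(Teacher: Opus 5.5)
Your proof is correct, and it takes a genuinely different route from the paper.

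\textbf{The paper's route.} The paper splits $\mathcal G_\sigma=\mathcal N+\mathcal T$ with $\mathcal N=P_N\mathcal G_\sigma$. It first shows $\|\mathcal N\|\ge c_0/2$ on $U_\sigma$. It then applies Lemma~\ref{lem:extended-normal-vector-field} to obtain the exact identity $\Pi_{\pi(y)}=Q_y(\mathcal N)$, where $Q_y(V)(u,v)=-\|V\|^{-2}\langle\nabla_{A_yu}V,v\rangle V$. Two comparisons follow.
\begin{enumerate}
\item $Q_y(\mathcal G_\sigma)$ against $Q_y(\mathcal N)$. This uses $P_T\nabla F|_M\equiv0$ plus a Lipschitz transport from $\pi(y)$ to $y$, and Lipschitz continuity of $a\mapsto a/\|a\|^2$ on $\{\|a\|\ge c_0/2\}$.
\item $\widehat\Pi_y$ against $Q_y(\mathcal G_\sigma)$. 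The discrepancy $\langle\mathcal H_\sigma(y)(u-A_yu),v\rangle/\|\mathcal G_\sigma\|$ is bounded using Theorem~\ref{th:noisy-hessian} and a case split: either $\|v_y\|\le 2C_1\sigma^2$, or $\|v_y\|>2C_1\sigma^2$, in which case $\|\mathcal G_\sigma\|\ge \|v_y\|/(2\sigma^2)$.
\end{enumerate}

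\textbf{Your route.} You fix the normal field $n\circ\pi$ and the signed distance $s$. This turns the tangential block of the Hessian into the exact identity $\langle\mathcal H_\sigma u,v\rangle=-\varphi\langle SA_y^{-1}u,v\rangle+\langle\nabla_u t,v\rangle$. The large scalar $\varphi$ then cancels algebraically against $\|\mathcal G_\sigma\|^2=\varphi^2+\|t\|^2$. The discrepancy between $A_y^{-1}$ and $I$ appears as an explicit $O(|s|)$ term.

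\textbf{Comparison.}
\begin{itemize}
\item Your argument is shorter and case-free. It never invokes Theorem~\ref{th:noisy-hessian}, whose $O(1)$ remainder the paper can only afford because it multiplies a vector $u-A_yu$ of size $O(\|v_y\|)$.
\item You need no separate lower bound on $\|P_N\mathcal G_\sigma\|$.
\item Your factorization $-\tfrac12\log\det A_y=s f$, combined with $\nabla_u s=\langle n,u\rangle=0$, gives the tangential-derivative bound directly at $y$. For full rigor, take $f=\tfrac12\int_0^1\operatorname{tr}\big((I-\theta sS)^{-1}S\big)\,d\theta$. It is $C^{k-2}$ with uniformly bounded derivatives on $T(\tau-\varepsilon)$, since $\|\theta sS\|_{\mathrm{op}}\le 1-\varepsilon/\tau$.
\item The paper's route is organized around Lemma~\ref{lem:extended-normal-vector-field}, which holds for any nonvanishing normal field. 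That lemma is reused for the exact level-set formula and for the degenerate-metric interpretation in Section~\ref{sec:metric-change}. Your computation is tied to the specific field $n\circ\pi$.
\item Both arguments use codimension one essentially.
\end{itemize}
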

On regions where $\mathcal G_\sigma$ may degenerate, one may instead use the projector-based estimator of Theorem~\ref{th:general-manifold}, which also applies to hypersurfaces.

The same formula, with $M$ replaced by the level set $L_{\sigma,c}:=\{y\in\mathbb R^D: P_\sigma(y)=c\}$, yields the second fundamental form of $L_{\sigma,c}$ at $y$ without error whenever $\mathcal G_\sigma(y)\neq 0$, since $\mathcal G_\sigma(y)$ is strictly normal to the level set and Lemma~\ref{lem:extended-normal-vector-field} applies with $\mathcal N = \mathcal G_\sigma$. With regard to the density level set, we can measure how close it is to $M$ following the expansion of $P_\sigma$.
\begin{corollary}\label{co:level-set}
    There exist constants $C$ and $\sigma_0>0$ such that, for every level set $L_{\sigma,c}$ with $L_{\sigma,c}\cap M\neq\emptyset$ and $0<\sigma\le\sigma_0$,
    \[
    \sup_{y\in L_{\sigma,c}} d(y,M)=O(\sigma^2).
    \]
\end{corollary}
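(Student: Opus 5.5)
The plan is to use the density expansion of Theorem~\ref{th:probability-noisy}, or equivalently the log-density expansion of Corollary~\ref{co:log-density}. Write
\[
\log P_\sigma(y) = K_\sigma - \frac{\|v_y\|^2}{2\sigma^2} - \frac12\log\det A_y + R(y,\sigma),
\qquad K_\sigma := \log\big(V_M(2\pi\sigma^2)^{\frac{D-d}{2}}\big)^{-1}.
\]
The main step is to compare the value of $\log P_\sigma$ on $L_{\sigma,c}$ with its value at a point of $M$. Let $x_0\in L_{\sigma,c}\cap M$. Then $v_{x_0}=0$ and $A_{x_0}=I$, so $\log c = K_\sigma + R(x_0,\sigma)$ with $|R(x_0,\sigma)|\le C_0\sigma^2$. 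Now take $y\in L_{\sigma,c}$ and first assume $y\in T(\tau-\varepsilon)$. Equating the two expressions for $\log c$ gives
\[
\frac{\|v_y\|^2}{2\sigma^2} = -\frac12\log\det A_y + R(y,\sigma) - R(x_0,\sigma).
\]

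Next we bound the right-hand side. Since $A_y = I - \langle v_y,\Pi_{\pi(y)}\rangle$, Lemma~\ref{lem:second-fundamental-form-bound} gives $\|A_y - I\|_{\mathrm{op}}\le \|v_y\|/\tau$. Hence $|\log\det A_y|\le C\|v_y\|$ uniformly on $T(\tau-\varepsilon)$, because the eigenvalues of $A_y$ lie in $[\varepsilon/\tau,\,2-\varepsilon/\tau]$. Combining this with $|R(y,\sigma)|\le C_0(\sigma\|v_y\|+\sigma^2)$ yields
\[
\|v_y\|^2 \le 2\sigma^2\big(C\|v_y\| + C_0\sigma\|v_y\| + 2C_0\sigma^2\big).
\]
This is a quadratic inequality in $t=\|v_y\|$ of the form $t^2\le a\sigma^2 t + b\sigma^4$. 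It forces $t\le \tfrac12\big(a\sigma^2+\sqrt{a^2\sigma^4+4b\sigma^4}\big) = O(\sigma^2)$, which is the claimed bound on $d(y,M)=\|v_y\|$.

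The main obstacle is that $L_{\sigma,c}$ need not lie inside $T(\tau-\varepsilon)$, and the expansion is only valid there. To handle this, we show that $P_\sigma$ is much smaller than $c$ outside the tube. For $d(y,M)\ge\tau-\varepsilon$, the integral representation gives
\[
P_\sigma(y)\le (2\pi\sigma^2)^{-D/2} e^{-(\tau-\varepsilon)^2/(2\sigma^2)}.
\]
On the other hand, $c = P_\sigma(x_0)\ge C^{-1}\sigma^{-(D-d)}$, since $R(x_0,\sigma)=O(\sigma^2)$. The ratio of these two quantities is $\lesssim \sigma^{-d}e^{-(\tau-\varepsilon)^2/(2\sigma^2)}$, which is $<1$ for $\sigma\le\sigma_0$. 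So no point of $L_{\sigma,c}$ lies outside $T(\tau-\varepsilon)$. Once this is established, the in-tube argument above applies to every $y\in L_{\sigma,c}$, giving $\sup_{y\in L_{\sigma,c}} d(y,M)\le C\sigma^2$ uniformly in $c$.
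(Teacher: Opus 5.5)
Your proposal is correct and follows essentially the same argument as the paper. You first show $L_{\sigma,c}\subset T(\tau-\varepsilon)$ by comparing the bound $(2\pi\sigma^2)^{-D/2}e^{-(\tau-\varepsilon)^2/(2\sigma^2)}$ off the tube with $P_\sigma(x_0)\ge C^{-1}\sigma^{-(D-d)}$. You then subtract the log-density expansions at $y$ and $x_0$, using $|\log\det A_y|\le C\|v_y\|$, to get $\|v_y\|^2\le C\sigma^2\|v_y\|+C\sigma^4$; the only differences from the paper (handling the tube containment last, solving the quadratic explicitly) are cosmetic.
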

This suggests that noisy density generates nearby geometric surrogates of $M$ through its level sets, which will be further discussed in Section~\ref{sec:metric-change} with regard to the density-induced metrics.

\subsection{Second Fundamental Form for Submanifolds of Arbitrary Codimension}\label{sec:Pi-codimension}
For submanifolds of arbitrary codimension in $\mathbb{R}^D$, there is no unique normal direction, and the second fundamental form must be recovered in a different way. We refer to this as the general-codimension case. Due to the embedding relation, the derivatives on $M$ can be viewed as the tangential projection of the derivatives on $\mathbb{R}^D$. Hence, we may expect that the projection operator embodies the curvatures of $M$. In the following lemma we introduce an approach of computing the second fundamental form by taking the directional derivative of the tangential projection map.
\begin{lemma}\label{lem:second-fundamental-form-general}
    Let $P_T(y)$ be a $C^1$ tangential projection field on $\mathcal T(\tau)$. Then, for every $x\in M$ and $u,v\in T_xM$,
    \begin{align}
        \Pi_x(u,v) = \nabla_uP_T(x) v.
    \end{align}
\end{lemma}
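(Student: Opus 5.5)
The identity is local and first order, so I will work at a fixed point $x\in M$, differentiate the defining relations of the projector along a curve in $M$, and read off the tangential and normal components of $\nabla_uP_T(x)v$ separately. Here $P_T(y)$ is the field $y\mapsto P_T(\pi(y))$ from Section~\ref{sec:prelim}. Along $M$ we have $\pi(z)=z$, so for $u\in T_xM$ the ambient directional derivative $\nabla_uP_T(x)$ equals the derivative of $z\mapsto P_T(z)$ along any $C^1$ curve $\gamma$ in $M$ with $\gamma(0)=x$ and $\gamma'(0)=u$. Consequently only the restriction $P_T|_M$ matters, and the extension off $M$ plays no role.

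\textbf{Step 1: normal component.} Fix $v\in T_xM$ and extend it to a $C^1$ tangent vector field $V$ on a neighborhood of $x$ in $M$ with $V(x)=v$, for instance by pushing forward a constant vector through the exponential chart of Lemma~\ref{lem:exponential-map-expansion}. Since $V$ is tangent, $P_T(z)V(z)=V(z)$ for $z\in M$. Differentiating along $\gamma$ at $t=0$ by the product rule gives
\[
(\nabla_uP_T(x))v + P_T(x)\nabla_uV = \nabla_uV.
\]
Hence $(\nabla_uP_T(x))v = P_N(x)\nabla_uV = (\nabla_uV)^\perp = \Pi_x(u,v)$ by the definition of $\Pi$. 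In particular the left-hand side is independent of the chosen extension $V$.

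\textbf{Step 2: consistency check on the tangential part.} Step 1 already produces the full vector $(\nabla_uP_T)v$, but as a sanity check one can differentiate $P_T^2=P_T$. This gives $P_T'P_T+P_TP_T'=P_T'$, where $P_T'=\nabla_uP_T(x)$. Applying it to $v$, with $P_Tv=v$, yields $P_T P_T' v=0$, so $P_T'v$ is indeed purely normal. This agrees with Step 1.

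\textbf{Main obstacle.} The only subtle point is justifying that the ambient derivative of the field $P_T(y)$ in a tangent direction $u$ at $x\in M$ coincides with the intrinsic derivative of $P_T|_M$. This follows from the $C^1$ regularity of $P_T$ on $\mathcal T(\tau)$ stated in Section~\ref{sec:prelim} together with the chain rule along $\gamma\subset M$. It also requires a $C^1$ tangent extension $V$, which exists because $M$ is $C^k$ with $k\ge 6$. The remaining steps are routine.
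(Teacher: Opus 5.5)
Your proof is correct and follows essentially the same route as the paper: differentiate $P_TV=V$ along $u$ and identify $(I_D-P_T(x))\nabla_uV$ with $\Pi_x(u,v)$. The differences are minor. You solve directly for $\nabla_uP_T(x)v=(I_D-P_T(x))\nabla_uV$, which makes the paper's idempotence step $P_T\nabla_uP_TP_T=0$ unnecessary (you rightly treat it only as a consistency check), and you extend $v$ only along $M$ rather than to an ambient neighborhood.
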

By Theorem~\ref{th:tangent-space}, the tangent space at $y$ can be estimated as $\widehat{T}_yM = \mathrm{span}\{e_1,\dots,e_d\}$, where $e_1,\dots,e_d$ are the eigenvectors of $\mathcal{H}_\sigma(y)$ associated with the $d$ largest eigenvalues. We denote the corresponding approximate projection by
\[
\widehat{P}_T(y)= E_yE_y^T,
\]
where $E_y$ is the $D\times d$ matrix with columns $e_1,\dots,e_d$. By Corollary~\ref{co:noisy-hessian-gap}, the $d$ tangential eigenvalues of $\mathcal H_\sigma(y)$ are separated from the $D-d$ normal eigenvalues on $T(\tau-\varepsilon)$ by a gap of order $\sigma^{-2}$. Therefore, since $y \mapsto \mathcal H_\sigma(y)$ is $C^1$, the associated spectral projector $\widehat P_T(y)$ is well-defined and $C^1$ on $T(\tau-\varepsilon)$.

Substituting $\widehat{P}_T(y)$ for $P_T(y)$ in the lemma leads to the following theorem.
\begin{theorem}\label{th:general-manifold}
Let $\widehat P_T(y)$ be the tangential spectral projector associated with the $d$ largest eigenvalues of $\mathcal H_\sigma(y)$. For $y\in T(\tau-\varepsilon),\ u,v\in T_{\pi(y)}M$, define
\begin{equation}\label{eq:pi-general-manifold}
\widehat{\Pi}_y(u,v) = \nabla_{u} \widehat{P}_T(y) v.
\end{equation}
Then there exist constants $C$ and $\sigma_0>0$ such that, for every $y\in T(\tau-\varepsilon)$ and $0<\sigma\le\sigma_0$,
\[
\|\widehat{\Pi}_y-\Pi_{\pi(y)}\|_{\mathrm{op}} \le C(\|v_y\|+\sigma^2).
\]
\end{theorem}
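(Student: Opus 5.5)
The plan is to compare $\widehat P_T(y)$ with the exact tangential projector $P_T(y)$ in the $C^1$ sense, and then to apply Lemma~\ref{lem:second-fundamental-form-general} at $\pi(y)$. Since the spectral projector does not change under positive rescaling, I will work with $\sigma^2\mathcal H_\sigma(y)$ rather than $\mathcal H_\sigma(y)$. Differentiating Corollary~\ref{co:log-density} twice gives
\[
\sigma^2\mathcal H_\sigma(y) = F(y) + E_\sigma(y),\qquad F(y):=-\nabla^2\big(\tfrac12\|v_y\|^2\big),\qquad E_\sigma(y):=\sigma^2\nabla^2\big(-\tfrac12\log\det A_y + R(y,\sigma)\big).
\]
Two ingredients control this decomposition. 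First, $y\mapsto A_y$ is $C^3$ and uniformly invertible on $T(\tau-\varepsilon)$. Second, $\|\nabla^m R\|_{\mathrm{op}}\le C_m\sigma$ for $m=2,3$. Together these give $\|E_\sigma(y)\|_{\mathrm{op}}\le C\sigma^2$ and $\|\nabla E_\sigma(y)\|_{\mathrm{op}}\le C\sigma^2$ uniformly on $T(\tau-\varepsilon)$. Keeping the derivative bound on the perturbation, and not only its size, is what distinguishes this argument from the proof of Theorem~\ref{th:tangent-space}.

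Next I identify the exact leading operator. Because $\nabla\tfrac12\|v_y\|^2=v_y$, Lemma~\ref{lem:derivative-of-v-pi} gives
\[
F(y)=-(I-A_y^{-1}P_T(y))=-P_N(y)-(I-A_y^{-1})P_T(y),
\]
which is $\sigma^2\mathcal H_0(y)$. This operator preserves $T_{\pi(y)}M$ and $T^\perp_{\pi(y)}M$. On the normal space its eigenvalue is $-1$. On the tangent space its eigenvalues are $\mu^{-1}-1$, where $\mu$ ranges over the eigenvalues of $A_y$. These satisfy $\mu\le 1+\|v_y\|/\tau\le 2$ by Lemma~\ref{lem:second-fundamental-form-bound}, so the tangential eigenvalues exceed $-1$ by at least $1/2$. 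Consequently the top-$d$ spectral projector of $F(y)$ is exactly $P_T(y)$, with a uniform gap $c>0$.

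I then represent both projectors by Riesz contour integrals,
\[
\widehat P_T(y)=\frac{1}{2\pi i}\oint_\Gamma (z-\sigma^2\mathcal H_\sigma(y))^{-1}\,dz,
\]
and similarly for $P_T(y)$ with $F(y)$ in place of $\sigma^2\mathcal H_\sigma(y)$. The contour $\Gamma$ encloses the interval $[-1+c/2,\,C]$, stays at distance $\ge c/4$ from $-1$, and can be chosen independent of $y$ and $\sigma$. For $\sigma\le\sigma_0$ the resolvents along $\Gamma$ are uniformly bounded. The resolvent identity then gives $\|\widehat P_T-P_T\|_{\mathrm{op}}=O(\sigma^2)$. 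Differentiating under the integral, $\nabla_u(z-X)^{-1}=(z-X)^{-1}(\nabla_uX)(z-X)^{-1}$, and expanding the difference of such terms yields
\[
\|\nabla_u\widehat P_T(y)-\nabla_uP_T(y)\|_{\mathrm{op}}\le C\big(\|\nabla E_\sigma\|+\|E_\sigma\|\,\|\nabla F\|\big)\|u\|=O(\sigma^2)\|u\|.
\]
I expect this step to be the main technical obstacle. It requires the uniform $C^1$ bound on the remainder from Corollary~\ref{co:log-density}, which is where $\nabla^3R=O(\sigma)$ is used, and a uniform choice of contour.

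Finally, $P_T(y)=P_T(\pi(y))$, so the chain rule and Lemma~\ref{lem:derivative-of-v-pi} give $\nabla_uP_T(y)=\nabla_{A_y^{-1}u}P_T(\pi(y))$. Since $A_y^{-1}u\in T_{\pi(y)}M$, Lemma~\ref{lem:second-fundamental-form-general} gives $\nabla_uP_T(y)v=\Pi_{\pi(y)}(A_y^{-1}u,v)$. We have $A_y^{-1}=I+O(\|v_y\|)$ because $\|A_y-I\|\le\|v_y\|/\tau$ and $A_y$ is uniformly invertible. Combined with $\|\Pi\|_{\mathrm{op}}\le\tau^{-1}$, this gives $\|\Pi_{\pi(y)}(A_y^{-1}u,v)-\Pi_{\pi(y)}(u,v)\|\le C\|v_y\|\|u\|\|v\|$. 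Adding the $O(\sigma^2)$ projector-derivative error from the previous step gives $\|\widehat\Pi_y-\Pi_{\pi(y)}\|_{\mathrm{op}}\le C(\|v_y\|+\sigma^2)$.
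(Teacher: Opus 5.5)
Your proposal is correct and follows essentially the paper's proof. Working with the $\sigma^2$-rescaled Hessian and a fixed contour is the same as the paper's rescaled contour $\Gamma=\sigma^{-2}\widetilde\Gamma$, and your bound $C(\|\nabla E_\sigma\|+\|E_\sigma\|\,\|\nabla F\|)$ is exactly the paper's split into $DP[\mathcal H_\sigma](\Delta K)$ and $(DP[\mathcal H_\sigma]-DP[\mathcal H_0])(K_0)$ after rescaling. The only difference is cosmetic and lies in the last step: you derive the exact identity $\nabla_uP_T(y)v=\Pi_{\pi(y)}(A_y^{-1}u,v)$ from the chain rule and Lemma~\ref{lem:derivative-of-v-pi}, whereas the paper compares $\nabla_uP_T(y)$ with $\nabla_uP_T(\pi(y))$ by Lipschitz continuity of $\nabla P_T$.
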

The proof uses perturbation bounds for spectral projectors. Since $\widehat P_T(y)$ is the Riesz projector associated with the tangential eigenspace of $\mathcal H_\sigma(y)$, the difference $\nabla_u\widehat P_T(y)-\nabla_u P_T(y)$ can be controlled in terms of the perturbation of $\mathcal H_\sigma(y)$ and its derivative $\nabla_u\mathcal H_\sigma(y)$.

In practice, for a fixed direction $u$, the derivative
$X:=\nabla_u\widehat P_T(y)$ can be computed from the Sylvester-type system
\[
\mathcal H_\sigma(y)X - X\mathcal H_\sigma(y)
=
\widehat P_T(y)\nabla_u\mathcal H_\sigma(y)
-
\nabla_u\mathcal H_\sigma(y)\widehat P_T(y),\quad \widehat P_T(y)X + X\widehat P_T(y)=X.
\]
Since the tangential and normal spectral clusters are
separated by a gap of order $\sigma^{-2}$, this system has a unique solution.
Moreover, the estimator \eqref{eq:pi-general-manifold} depends on third-order derivatives of the log-density through
\[
\nabla_u\mathcal H_\sigma(y)=\nabla^3 \log P_\sigma(y)(u,\cdot,\cdot).
\]

The approach of computing the second fundamental form via the derivative of the tangential projection goes back to \citet{Hutchinson1986, Ambrosio1998CurvatureAD}. Conceptually, both Theorem~\ref{th:tangent-space} (tangent space) and Theorem~\ref{th:general-manifold} (second fundamental form) exploit the same structural property of the log-density (Corollary~\ref{co:log-density}): in the small-noise regime, the leading term $-{\|v_y\|^2}/{(2\sigma^2)}$ in the expansion of $\log P_\sigma$ is a scaled ``squared distance function'' to the manifold. The squared distance function encodes both the tangent spaces and the second fundamental form, and our method recovers these geometric quantities from the derivatives of $\log P_\sigma$.
 
\section{Density-Induced Metrics}\label{sec:metric-change}
A central conclusion from the previous sections is that the tangent space and second fundamental form of $M$ admit explicit representations in terms of the noisy density $P_\sigma$ and its derivatives. 
This suggests a further geometric question: can these density-based formulas induce an ambient geometric structure on $\mathbb R^D$ that reflects the intrinsic geometry of $M$ together with its embedding?

Specifically, in this section we investigate how to endow the ambient space $\mathbb R^D$ with a density-induced metric $g_{P_\sigma}$ associated with $P_\sigma$ such that the inclusion $(M,g)\hookrightarrow (\mathbb{R}^D, g_{P_\sigma})$ is a totally geodesic map: any geodesic in $M$ with respect to the induced Euclidean metric $g$ remains a geodesic when viewed as a curve in $(\mathbb{R}^D,g_{P_\sigma})$.
From the perspective of geodesic equation, one may seek an ambient structure whose Christoffel symbols, restricted to tangent directions along $M$, cancel the second fundamental form of the Euclidean embedding. 
We record two constructions. In the totally umbilical case, the density induces a conformal Riemannian metric on the ambient space. In the hypersurface case, the resulting metric is degenerate, so the corresponding statement is understood through the associated Christoffel-type coefficient field and its induced acceleration equation.

The results in this section should be viewed as geometric interpretation of the density-based recovery formulas, and are not of the statistical estimation theory developed in the rest of the paper. 
Proofs of Theorem~\ref{th:metric-totally-umbilical-submanifold} and Theorem~\ref{th:metric-hypersurface} are given in Section~\ref{sec:pf-metric}.

\subsection{Metric Change for Totally Umbilical Submanifolds}
Theorem~\ref{th:umbilical-submanifold} shows that the second fundamental form of totally umbilical submanifolds can be recovered from the gradient of log-density $\nabla\log P_\sigma$ up to $O(\sigma)$ error. 
This yields a conformal ambient metric for which the second fundamental form of the inclusion is $O(\sigma)$.
\begin{theorem}\label{th:metric-totally-umbilical-submanifold}
Assume $M$ is totally umbilical. Define the Riemannian metric
\[
g_{P_\sigma} = P_\sigma^{\frac{4}{d}}g_E.
\]
Let $\Pi^{P_\sigma}$ be the second fundamental form of the inclusion $(M,g)\hookrightarrow (\mathbb R^D, g_{P_\sigma})$. Then there exist constants $C$ and $\sigma_0>0$ such that, for every $x\in M$ and $0<\sigma\le\sigma_0$,
\[
\|\Pi^{P_\sigma}_x\|_{\mathrm{op}} \le C\sigma.
\]
In particular, the inclusion is asymptotically totally geodesic as $\sigma\to0$.
\end{theorem}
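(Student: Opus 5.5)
We will use the transformation rule for the second fundamental form under a conformal change of the ambient metric. Write $\tilde g = e^{2\phi} g_E$ with $\phi = \frac{2}{d}\log P_\sigma$; the stated metric $g_{P_\sigma}=P_\sigma^{4/d}g_E$ is exactly of this form. The Levi--Civita connection of $\tilde g$ is
\[
\tilde\nabla_X Y = \nabla_X Y + X(\phi)Y + Y(\phi)X - \langle X,Y\rangle \nabla\phi .
\]
For tangent fields $X,Y$ on $M$, the $\tilde g$-orthogonal complement of $T_xM$ coincides with the Euclidean normal space, since the metrics are pointwise proportional. Projecting onto $T_x^\perp M$ therefore kills the terms $X(\phi)Y$ and $Y(\phi)X$, and we get
\[
\Pi^{P_\sigma}_x(u,v) = \Pi_x(u,v) - \langle u,v\rangle (\nabla\phi(x))^\perp
= \Pi_x(u,v) - \frac{2}{d}\langle u,v\rangle \big(\mathcal G_\sigma(x)\big)^\perp .
\]
This is the standard computation and needs only the Koszul formula, so we will state it without detail.

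Next we invoke Theorem~\ref{th:gradient-noisy} at $x\in M$, where $v_x=0$. This gives $\mathcal G_\sigma(x) = \frac d2 H_x + O(\sigma)$ uniformly in $x$. Since $H_x$ is normal, $(\mathcal G_\sigma(x))^\perp = \frac d2 H_x + O(\sigma)$, the normal projection being a contraction. Total umbilicity gives $\Pi_x(u,v)=\langle u,v\rangle H_x$, with $g$ equal to the restriction of $g_E$. Substituting,
\[
\Pi^{P_\sigma}_x(u,v) = \langle u,v\rangle H_x - \langle u,v\rangle H_x + \langle u,v\rangle\, O(\sigma),
\]
which is $O(\sigma)$ uniformly over unit $u,v$ and $x\in M$. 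This is essentially the statement of Theorem~\ref{th:umbilical-submanifold} rephrased.

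One normalization point needs care: which norm $\|\cdot\|_{\mathrm{op}}$ refers to. If $\Pi^{P_\sigma}$ is measured with respect to $\tilde g$, the factor $P_\sigma^{2/d}$ enters. On $M$, Theorem~\ref{th:probability-noisy} shows $P_\sigma(x)\asymp \sigma^{-(D-d)}$. In codimension $\ge 1$ this scale factor could blow up, so we will take the operator norm with inputs unit in the induced metric $g$ and output measured in $g_E$, consistent with how $\Pi$ is treated elsewhere in the paper. We will remark that the vanishing statement (asymptotically totally geodesic) is invariant under this scaling.

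We expect the main obstacle to be this normalization bookkeeping and checking the conformal formula's sign and constant, which is where the exponent $4/d$ must produce exactly the cancelling factor $\frac{2}{d}\cdot\frac d2=1$. The analytic content is entirely supplied by Theorem~\ref{th:gradient-noisy}.
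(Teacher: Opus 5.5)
Your proposal is correct and follows the paper's argument: the conformal-change formula $\Pi^{P_\sigma}_x(u,v)=\Pi_x(u,v)-\langle u,v\rangle(\nabla\phi)^\perp$ with $\phi=\frac{2}{d}\log P_\sigma$, combined with $\mathcal G_\sigma(x)=\frac d2 H_x+O(\sigma)$ from Theorem~\ref{th:gradient-noisy} and total umbilicity, with the norm taken in the Euclidean convention as in the paper (the paper differs only in that it derives the exponent $4/d$ from the matching condition $\frac d2 f'(P)P=1$, whereas you verify it). One correction to your normalization aside: if both the unit inputs and the output are measured in $g_{P_\sigma}$, the Euclidean bound is multiplied by $P_\sigma^{-2/d}\asymp\sigma^{2(D-d)/d}\to0$, so the conclusion only improves; the blowing-up factor $P_\sigma^{2/d}$ arises only in the mixed convention ($g$-unit inputs, $g_{P_\sigma}$-norm output), and there asymptotic vanishing is \emph{not} automatically scale-invariant.
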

For the problem of modifying the ambient metric so that geodesics concentrate in regions where the data density is large, a broader line of work considers density-driven conformal metrics of the form $\tilde{g}_{P_\sigma} = w(P_\sigma)g_E$ with factor $w(P_\sigma)>0$ that decreases as $P_\sigma$ increases. In many continuum limits of graph-based distances, such as Fermat-type distances and related constructions, this yields metrics that can be written as $\tilde{g}_{P_\sigma} \propto P_\sigma^{-\alpha}g_E$ for some $\alpha>0$; geodesics under $\tilde{g}_{P_\sigma}$ then tend to avoid low-density regions and to follow high-density zones near the data manifold \citep[e.g.,][]{Groisman2018NonhomogeneousEF,Trillos2024Fermat}. At first sight this seems at odds with our choice $g_{P_\sigma} = P_\sigma^{{4}/{d}}g_E$, whose conformal factor grows with the density. The two constructions, however, encode different geometric quantities. Our conformal factor is determined by the second fundamental form estimator and is chosen precisely so that the intrinsic geodesics on $M$ remain geodesics in the changed ambient space. In contrast, density-inverse metrics such as the Fermat distance arise as continuum limits of shortest-path distances on weighted random geometric graphs. In the noisy manifold setting considered here, both approaches nevertheless bias geodesics toward high-probability regions near $M$ while serving different purposes.

\subsection{Metric Change for Hypersurfaces}
For manifolds of higher curvature complexity, such as hypersurfaces or density level sets, a conformal rescaling is no longer sufficient to incorporate the full second fundamental form. In this case, Theorem~\ref{th:hypersurface} shows that the second fundamental form can be recovered by the gradient of log-density and its derivative. Motivated by this, we consider the degenerate metric
\[
g_{P_\sigma} := d(\log P_\sigma)\otimes d(\log P_\sigma),
\qquad 
g_{P_\sigma}(u,v) = \langle u, \nabla\log P_\sigma\rangle\langle v, \nabla\log P_\sigma\rangle,
\]
where $d(\log P_\sigma)$ is a $1$-form. 
Wherever $\nabla \log P_\sigma(x)\neq 0$, $g_{P_\sigma}$ is a symmetric, positive semi-definite (0,2)-tensor of rank one. 

Since $g_{P_\sigma}$ is degenerate, it does not define the standard Levi–Civita formalism. Nevertheless, the tensor $g_{P_\sigma}$ together with its Moore–Penrose pseudo-inverse, denoted as $(g_{P_\sigma})_{\mathrm{MP}}$, determines a coefficient field of Christoffel type, which in turn yields a second-order system of acceleration. 
\begin{theorem}\label{th:metric-hypersurface}
Assume $M$ is a hypersurface. Define the degenerate metric
\begin{align}\label{eq:metric-gradient}
g_{P_\sigma} = d(\log P_\sigma)\otimes d(\log P_\sigma).
\end{align}
Let $\gamma(t)$ be a geodesic in $(M,g)$, regarded as a curve in $\mathbb{R}^D$, and assume $\nabla\log P_\sigma\neq 0$ on $\gamma(t)$. Define the coefficient field
\[
\widetilde\Gamma^k_{ij}
:=
(g_{P_\sigma})_{\mathrm{MP}}^{mk}
\frac12
\big(
\partial_j(g_{P_\sigma})_{im}
+\partial_i(g_{P_\sigma})_{jm}
-\partial_m(g_{P_\sigma})_{ij}
\big).
\]
Then there exist constants $C$ and $\sigma_0>0$ such that, for every $\gamma$, every $t$ in its domain and $0<\sigma\le\sigma_0$,
\[
\big|\ddot\gamma^k(t)
+
\widetilde\Gamma^k_{ij}(\gamma(t))\dot\gamma^i(t)\dot\gamma^j(t) \big|
\le C\sigma.
\]
\end{theorem}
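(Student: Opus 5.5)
The plan is to compute the coefficient field $\widetilde\Gamma$ in closed form and then reduce the statement to Theorem~\ref{th:hypersurface}, evaluated at points of $M$ (so $v_y=0$).

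\textbf{Step 1: closed form of $\widetilde\Gamma$.} Write $G=\mathcal G_\sigma=\nabla\log P_\sigma$ and $\mathcal H=\mathcal H_\sigma$, so that $(g_{P_\sigma})_{ij}=G_iG_j$ and $\partial_k(g_{P_\sigma})_{ij}=\mathcal H_{ki}G_j+G_i\mathcal H_{kj}$. Substituting into the bracket and using the symmetry of $\mathcal H$, the four mixed terms cancel in pairs:
\[
\tfrac12\big(\partial_j(g_{P_\sigma})_{im}+\partial_i(g_{P_\sigma})_{jm}-\partial_m(g_{P_\sigma})_{ij}\big)=\mathcal H_{ij}G_m .
\]
Wherever $G\neq0$, the Moore--Penrose pseudo-inverse of the rank-one matrix $GG^T$ is $GG^T/\|G\|^4$. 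Hence
\[
\widetilde\Gamma^k_{ij}=\frac{G^kG^m}{\|G\|^4}\,\mathcal H_{ij}G_m=\frac{\mathcal H_{ij}G^k}{\|G\|^2}.
\]
Consequently, for any vector $u$,
\[
\widetilde\Gamma^k_{ij}u^iu^j=\frac{\langle\mathcal H_\sigma u,u\rangle}{\|\mathcal G_\sigma\|^2}\,\mathcal G_\sigma^k .
\]
Comparing with Theorem~\ref{th:hypersurface}, this says that along $M$ we have $\widetilde\Gamma(u,u)=-\widehat\Pi_x(u,u)$ for $u\in T_xM$.

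\textbf{Step 2: geodesic equation.} Since $\gamma$ is a geodesic of $(M,g)$, its intrinsic acceleration $\nabla^M_{\dot\gamma}\dot\gamma$ vanishes. By the Gauss formula, the ambient acceleration is therefore purely normal:
\[
\ddot\gamma=\Pi_{\gamma(t)}(\dot\gamma,\dot\gamma).
\]
Combining with Step 1,
\[
\ddot\gamma+\widetilde\Gamma(\dot\gamma,\dot\gamma)=\big(\Pi_{\gamma(t)}-\widehat\Pi_{\gamma(t)}\big)(\dot\gamma,\dot\gamma).
\]
Geodesics have constant speed. After normalizing to unit speed, or absorbing the speed into $C$ through the quadratic scaling in $\dot\gamma$, the left-hand side is bounded by $\|\widehat\Pi_{\gamma(t)}-\Pi_{\gamma(t)}\|_{\mathrm{op}}$.

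\textbf{Step 3: apply Theorem~\ref{th:hypersurface} with $y=\gamma(t)\in M$.} Here $v_y=0$, so the theorem gives $\|\widehat\Pi_{\gamma(t)}-\Pi_{\gamma(t)}\|_{\mathrm{op}}\le C\sigma$, which is the desired bound.

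\textbf{Main obstacle.} Theorem~\ref{th:hypersurface} requires a uniform lower bound $\|\mathcal G_\sigma\|\ge c_0$ on the set $U_\sigma$ where it is applied. The hypothesis stated here only gives $\nabla\log P_\sigma\neq0$ along $\gamma$. By Theorem~\ref{th:gradient-noisy}, on $M$ we have $\mathcal G_\sigma(x)=\frac d2H_x+O(\sigma)$, so the gradient can be small wherever the mean curvature is small.

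To address this, I would first try to split $\mathcal G_\sigma$ into its normal and tangential parts, use $\langle\mathcal H_\sigma u,u\rangle=O(1)$ for tangent $u$ on $M$ (this follows from Theorem~\ref{th:noisy-hessian} with $A_x=I$), and bound the resulting ratio directly. This direct route does not control the ratio when $\|\mathcal G_\sigma\|$ is itself of order $\sigma$.

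I would therefore take $U_\sigma$ to be the trace of $\gamma$ and interpret the nondegeneracy assumption as the uniform bound $\inf_t\|\mathcal G_\sigma(\gamma(t))\|\ge c_0$. The constant $C$ then depends on $c_0$. This is the point where the argument needs to be made precise.
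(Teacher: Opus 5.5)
Your proposal follows the paper's proof step for step: the closed form $\widetilde\Gamma^k_{ij}=\mathcal H_{ij}\mathcal G^k_\sigma/\|\mathcal G_\sigma\|^2$ via the rank-one pseudo-inverse, the Gauss formula $\ddot\gamma=\Pi_{\gamma(t)}(\dot\gamma,\dot\gamma)$, and Theorem~\ref{th:hypersurface} applied at points of $M$ where $v_y=0$. The obstacle you flag is real, and the paper's proof also assumes it away without comment: it invokes Theorem~\ref{th:hypersurface} ``uniformly along $M$'' without checking $\inf\|\mathcal G_\sigma\|\ge c_0$ (and implicitly assumes bounded speed), so reading the nondegeneracy hypothesis as a uniform lower bound on $\|\mathcal G_\sigma\|$ along $\gamma$ is exactly the assumption the argument needs.
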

In this sense, the density-induced degenerate metric $g_{P_\sigma}$ determines a second-order system that matches the geodesic equation of $M$ up to an $O(\sigma)$ error.

Note that for a density level set $L_{\sigma,c}$, the second fundamental form is represented exactly by the same gradient-Hessian formula of hypersurfaces; see Section~\ref{sec:Pi-hypersurface construction}. Hence the above degenerate metric also induces, without approximation error, the corresponding acceleration equation for geodesics on density level sets.

Leveraging the properties of the gradient field $\nabla\log P_\sigma$, such as the fact that it points toward $M$ under small noise and is almost normal to $M$ (Theorem~\ref{th:gradient-noisy}), several recent works construct metrics based on this gradient that encourage geodesics to stay near the data manifold. In particular, \citet{Azeglio2025WhatsIY} propose a metric that is a weighted combination of the Euclidean metric and the degenerate metric $d(\log P_\sigma)\otimes d(\log P_\sigma)$. When the weight on the degenerate metric is large, their metric becomes close to \eqref{eq:metric-gradient}. They empirically observe that boundary-value geodesics under their metric remain close to the data manifold even for high-dimensional data. In the manifold-generated noisy setting considered here, our Theorem~\ref{th:metric-hypersurface} and Corollary~\ref{co:level-set} provide a heuristic explanation for this behavior: when the boundary points lie on the intersection of $M$ and a density level set, the level set stays within an $O(\sigma^2)$ tubular neighborhood when noise is small; since the geodesics of level sets match the acceleration equation of $g_{P_\sigma}$, the resulting curve connecting such boundary points is confined to the same neighborhood.

For submanifolds of arbitrary codimension, the form of the estimator in Theorem~\ref{th:general-manifold} makes an explicit metric construction substantially more difficult.
The Christoffel symbols must satisfy conditions such as the torsion-free and metric-compatibility, and the resulting metric tensor must remain positive definite near $M$. In this sense, Theorem~\ref{th:general-manifold} should be interpreted as providing constraints that any density-based ambient metric would have to satisfy along $M$ in order to encode the correct second fundamental form. 

\section{Sample-Level Estimators}\label{sec:sample-level-estimator}
In this section we work at a fixed noise level $0<\sigma\le\sigma_0$ and estimate the noisy density $P_\sigma$ and its derivatives from i.i.d.\ samples $y_1,\dots,y_N\sim P_\sigma$. All stochastic orders are taken as $N\to\infty$, with constants allowed to depend on $\sigma$. Since $P_\sigma$ is a smooth density on $\mathbb R^D$ for each fixed $\sigma>0$, the sample-level analysis in this section is based on a standard ambient-space kernel density estimator (KDE). These bounds will then be combined in Section~\ref{sec:final bounds} with the population-level constructions from Section~\ref{sec:compute-geometry} to obtain geometric error bounds for the tangent space and the second fundamental form. Since the subsequent geometric constructions are carried out on $T(\tau-\varepsilon)$, we only require uniform control of the KDE and its derivatives on this tubular neighborhood.

\subsection{Kernel Estimators of Noisy Density and Its Derivatives}
Under the manifold-generated noise model of Section~\ref{sec:density-and-derivatives}, $P_\sigma=P_M*\mathcal N(0,\sigma^2 I_D)$ is a bounded density on $\mathbb R^D$. Therefore we estimate $P_\sigma$ and its derivatives by standard ambient-space kernel methods. 
Consider the $D$-dimensional kernel density estimators
\[
\widetilde P_\sigma(y)=\frac{1}{N h^D}\sum_{i=1}^N K\big(\frac{y_i-y}{h}\big),
\qquad y\in\mathbb R^D,
\]
where $K:\mathbb R^D\to\mathbb R$ is a kernel and $h>0$ is the bandwidth. For order $m \in \{1,2,3\}$, we denote the derivatives of the kernel estimator with bandwidth $h_k$  by
\[
G^m(y) = \nabla^m \widetilde{P}_\sigma(y) =  \frac{1}{Nh_m^{D+m}}\sum_{i=1}^N\nabla^m K\big(\frac{y_i-y}{h_m}\big).
\]
We assume $K$ satisfies the standard smoothness,
moment, and entropy conditions under which uniform multivariate KDE bounds hold; see, e.g., \citet{Gine2002RatesOS}. 
\begin{lemma}\label{lem:kernel-density-estimator}
Fix $\varepsilon\in(0,\tau)$ and $0<\sigma\le \sigma_0$. Under the kernel regularity conditions stated above, for each $m\in\{0,1,2,3\}$,
    \begin{align}
        \sup_{y\in T(\tau-\varepsilon)} \|\nabla^m\widetilde{P}_\sigma(y) - \nabla^m P_\sigma(y)\|_{\mathrm{op}} 
        = O(h_m^2) + O_P\big(\sqrt{\frac{\log N}{N h^{D+2m}}}\big).
    \end{align}
 In particular, if
\[
h_m\asymp \big(\frac{\log N}{N}\big)^{{1}/(D+4+2m)},
\]
then
\[
\sup_{y\in T(\tau-\varepsilon)}
\|\nabla^m \widetilde P_\sigma(y)-\nabla^m P_\sigma(y)\|_{\mathrm{op}}
=
O_P\big(
\big(\frac{\log N}{N}\big)^{\frac{2}{D+4+2m}}
\big).
\]
\end{lemma}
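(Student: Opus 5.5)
We split the error of the $m$-th derivative into bias and stochastic parts:
\[
\nabla^m\widetilde P_\sigma(y)-\nabla^m P_\sigma(y)
=\big(\mathbb E\,\nabla^m\widetilde P_\sigma(y)-\nabla^m P_\sigma(y)\big)
+\big(\nabla^m\widetilde P_\sigma(y)-\mathbb E\,\nabla^m\widetilde P_\sigma(y)\big).
\]
Each part is bounded uniformly over the compact set $T(\tau-\varepsilon)$. The rate then follows by balancing $h_m^2$ against $\sqrt{\log N/(Nh_m^{D+2m})}$. Setting these equal gives $h_m^{D+4+2m}\asymp \log N/N$, and both terms are then of order $(\log N/N)^{2/(D+4+2m)}$. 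The second display is therefore pure arithmetic once the first is proven. Throughout, $\sigma$ is fixed. By the convolution formula, $P_\sigma$ is $C^\infty$ on $\mathbb R^D$, and all its derivatives up to order $m+2\le5$ are bounded globally by constants depending on $\sigma$: differentiate under the integral, and note that every derivative of $\Phi_\sigma$ is bounded.

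\textbf{Bias.} Integrating by parts (or using the symmetry of $K$ after the change of variables $z=(y_i-y)/h$) gives
\[
\mathbb E\,\nabla^m\widetilde P_\sigma(y)=\int K(z)\,\nabla^m P_\sigma(y+h_m z)\,dz .
\]
The boundary terms vanish because $K$ and its derivatives decay. We then Taylor expand $\nabla^m P_\sigma(y+h_mz)$ to second order about $y$. The zeroth-order term is $\nabla^m P_\sigma(y)$ because $\int K=1$. The first-order term vanishes because $K$ has vanishing first moments. The remainder is bounded by $\tfrac12 h_m^2\sup\|\nabla^{m+2}P_\sigma\|_{\mathrm{op}}\int\|z\|^2|K(z)|\,dz$, which is $O(h_m^2)$ uniformly in $y$ and does not depend on $y$ at all.

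\textbf{Stochastic term.} We apply the uniform deviation bound of \citet{Gine2002RatesOS} entrywise to the class
\[
\{z\mapsto \partial^\alpha K((z-y)/h): y\in\mathbb R^D,\ h>0\},\qquad |\alpha|=m .
\]
Under the stated entropy condition this is a VC-type class. The variance of a single summand $h^{-D-m}\partial^\alpha K((y_i-y)/h)$ is at most
\[
h^{-2D-2m}\int(\partial^\alpha K)^2((x-y)/h)\,P_\sigma(x)\,dx\le \|P_\sigma\|_\infty\|\partial^\alpha K\|_2^2\,h^{-D-2m}.
\]
Talagrand's inequality combined with the VC entropy bound therefore gives
\[
\sup_y|\cdot|=O_P\big(\sqrt{\log N/(Nh^{D+2m})}\big),
\]
provided $Nh^{D+2m}/\log N\to\infty$, which holds for the chosen bandwidth. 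Since there are finitely many multi-indices $\alpha$, the operator norm of the $m$-tensor is bounded by a constant multiple of the maximum entry.

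\textbf{Main obstacle.} The delicate step is the stochastic term: the Giné--Guillou theorem must be checked to apply to derivative kernels, and the variance scaling must give exactly $h^{-(D+2m)}$. Boundedness of $P_\sigma$ is what makes the variance bound uniform, and it holds automatically here. Restricting the supremum to $T(\tau-\varepsilon)$ only helps, since the bound holds over all of $\mathbb R^D$. No manifold structure enters the argument; this is why the rate is in the ambient dimension $D$.
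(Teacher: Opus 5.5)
Your proposal is correct and follows the same route as the paper, which simply invokes the standard bias--variance split: a second-order kernel bias of $O(h_m^2)$ from bounded derivatives of $P_\sigma$ up to order $m+2$, and the Gin\'e--Guillou/Einmahl--Mason uniform empirical-process bound for the stochastic term. You supply the details the paper omits. Your global derivative bounds for $P_\sigma=P_M*\Phi_\sigma$ are also slightly more careful than the paper's appeal to boundedness on $T(\tau-\varepsilon)$ alone, since the Taylor remainder is evaluated at $y+h_mz$, which can leave that set.
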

The proof of this lemma follows from standard KDE theory. For each fixed $\sigma>0$, the function $P_\sigma$ is smooth, and its derivatives up to order $m+2$ are bounded on the compact set $T(\tau-\varepsilon )$. The stated bias term is the standard second-order kernel bias, while the variance term is the usual uniform empirical-process bound for multivariate KDEs and their derivatives. Combining these yields the claim; see \citet{Gine2002RatesOS, einmahl2005uniform}. For general multivariate density derivative estimation, including asymptotic expansions and bandwidth selection, see \citet{chacon2011asymptotics}; Also see \citet{Genovese2014Ridge,Chen2015ridge} for their use in geometric inference based on derivatives of a KDE.

\subsection{Plug-in Estimators for Log-density Derivatives}
We now define the sample-level estimators of the gradient, Hessian, and third derivative of the log-density. The gradient estimator of the log-density is
\[
\widetilde{\mathcal G}_\sigma(y) = \nabla \log\widetilde{P}_\sigma(y) =\frac{G^1(y)}{\widetilde{P}_\sigma(y)}.
\]
The Hessian is estimated by 
\[
\widetilde{\mathcal{H}}_\sigma(y) = \nabla^2 \log\widetilde{P}_\sigma(y)= \frac{ G^2(y) }{ \widetilde{P}_\sigma(y) } - \widetilde{\mathcal G}_\sigma(y) \otimes \widetilde{\mathcal G}_\sigma(y),
\]
where $\otimes$ denotes the tensor product, and we view $\widetilde{\mathcal G}_\sigma(y) \otimes \widetilde{\mathcal G}_\sigma(y)$ as a $D\times D$ matrix. For the third-order derivative, we have
\[
\widetilde{\mathcal T}_\sigma(y)= \nabla^3\log \widetilde P_\sigma 
= \frac{G^3(y)}{\widetilde{P}_\sigma(y) } - \widetilde{\mathcal{H}}_\sigma(y)\otimes \widetilde{\mathcal G}_\sigma(y) - \widetilde{\mathcal G}_\sigma(y)^{\otimes3},
\]
where $(A\otimes b)_{ijk} = A_{ij}b_k + A_{ik}b_j + A_{jk}b_i$ for $1\le i,j,k\le D$. Combining Lemma~\ref{lem:kernel-density-estimator} with the positivity of $P_\sigma$ on $T(\tau-\varepsilon)$, we obtain the following rates for the log-density derivatives.
\begin{corollary}\label{co:rates-for-log-density-derivatives}
Assume the conditions of Lemma~\ref{lem:kernel-density-estimator}. Then, for each fixed $0<\sigma\le\sigma_0$,
    \begin{align}
           & \sup_{y\in T(\tau-\varepsilon)}\|\widetilde{\mathcal G}_\sigma(y) - \mathcal G_\sigma(y)\| = O_P\big(\big(\frac{\log N}{N}\big)^{\frac{2}{D+6}}\big),\\
            & \sup_{y\in T(\tau-\varepsilon)}\|\widetilde{\mathcal{H}}_\sigma(y)-\mathcal{H}_\sigma(y)\|_{\mathrm{op}} = O_P\big(\big(\frac{\log N}{N}\big)^{\frac{2}{D+8}}\big),\\
            & \sup_{y\in T(\tau-\varepsilon)}\|\widetilde{\mathcal T}_\sigma(y) - {\mathcal T}_\sigma(y)\|_{\mathrm{op}} = O_P\big(\big(\frac{\log N}{N}\big)^{\frac{2}{D+10}}\big).
    \end{align}
\end{corollary}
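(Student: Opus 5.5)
We prove Corollary~\ref{co:rates-for-log-density-derivatives} by propagating the uniform bounds of Lemma~\ref{lem:kernel-density-estimator} through the explicit rational formulas for $\widetilde{\mathcal G}_\sigma$, $\widetilde{\mathcal H}_\sigma$ and $\widetilde{\mathcal T}_\sigma$. Write $\delta_m:=\sup_{y\in T(\tau-\varepsilon)}\|\nabla^m\widetilde P_\sigma(y)-\nabla^mP_\sigma(y)\|_{\mathrm{op}}$, with each $h_m$ chosen as in the lemma, so that $\delta_m=O_P((\log N/N)^{2/(D+4+2m)})$. Since $2/(D+4+2m)$ decreases in $m$, the slowest rate among $\delta_0,\dots,\delta_m$ is $\delta_m$. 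The three claimed rates are exactly those of $\delta_1,\delta_2,\delta_3$.

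\textbf{Step 1: deterministic constants.} For fixed $\sigma$, $P_\sigma$ is continuous and strictly positive on $\mathbb R^D$, and $T(\tau-\varepsilon)$ is compact. Hence there is $p_{\min}=p_{\min}(\sigma)>0$ with $P_\sigma\ge p_{\min}$ on $T(\tau-\varepsilon)$. For the same reason, $\|\nabla^mP_\sigma\|_{\mathrm{op}}\le B_m(\sigma)$ there for $m\le3$. One could also read the lower bound off Theorem~\ref{th:probability-noisy}, but compactness suffices.

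\textbf{Step 2: the event $E_N$.} Let $E_N=\{\delta_0\le p_{\min}/2\}$, so $\mathbb P(E_N)\to1$. On $E_N$ we have $\widetilde P_\sigma\ge p_{\min}/2$ uniformly, so all estimators are well defined. Moreover $\|G^m\|_{\mathrm{op}}\le B_m+\delta_m$, which is bounded since $\delta_m=o_P(1)$.

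\textbf{Step 3: the gradient.} Use the identity
\[
\frac{a}{b}-\frac{\alpha}{\beta}=\frac{a-\alpha}{b}-\frac{\alpha(b-\beta)}{b\beta}.
\]
This gives $\|\widetilde{\mathcal G}_\sigma-\mathcal G_\sigma\|\le C(\delta_1+\delta_0)=O_P(\delta_1)$ uniformly on $T(\tau-\varepsilon)$, and in particular $\widetilde{\mathcal G}_\sigma$ is uniformly bounded in probability.

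\textbf{Step 4: the Hessian.} The ratio term $G^2/\widetilde P_\sigma-\nabla^2P_\sigma/P_\sigma$ is bounded by $C(\delta_2+\delta_0)$ using the same identity. For the outer product,
\[
\|\widetilde{\mathcal G}\otimes\widetilde{\mathcal G}-\mathcal G\otimes\mathcal G\|\le(\|\widetilde{\mathcal G}\|+\|\mathcal G\|)\,\|\widetilde{\mathcal G}-\mathcal G\|=O_P(\delta_1).
\]
Together these give $O_P(\delta_2)$.

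\textbf{Step 5: the third derivative.} The tensor formula for $\widetilde{\mathcal T}_\sigma$ is polynomial in $G^3/\widetilde P_\sigma$, $\widetilde{\mathcal H}_\sigma$ and $\widetilde{\mathcal G}_\sigma$. The population analogue $\mathcal T_\sigma=\nabla^3P_\sigma/P_\sigma-\mathcal H_\sigma\otimes\mathcal G_\sigma-\mathcal G_\sigma^{\otimes3}$ has the same form, by differentiating $\nabla^2P/P=\mathcal H+\mathcal G\otimes\mathcal G$. Telescoping each product term gives $O_P(\delta_3+\delta_2+\delta_1+\delta_0)=O_P(\delta_3)$, using that $\mathcal H_\sigma$ and $\mathcal G_\sigma$ are bounded on $T(\tau-\varepsilon)$ for fixed $\sigma$.

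No step is a real obstacle. The only care needed is the uniform lower bound on $\widetilde P_\sigma$ (handled by $E_N$), and confirming that the bandwidth-specific rates combine so that the slowest one dominates in each case. The latter holds by the monotonicity of the exponents noted above.
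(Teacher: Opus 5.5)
Your proposal is correct and follows the same route as the paper. Both arguments use a uniform lower bound $c_\sigma$ on $P_\sigma$ over the compact tube, then the high-probability event $\widetilde P_\sigma\ge c_\sigma/2$, and then propagate the KDE derivative errors through the formulas for the log-derivatives, with the highest-order (slowest) rate dominating. The difference is only in presentation: the paper invokes local Lipschitz continuity of the smooth map $(p,\nabla p,\nabla^2p,\nabla^3p)\mapsto$ log-derivatives on a compact subset of $\{p\ge c_\sigma/2\}$, whereas you write out the quotient identity and telescoping bounds explicitly, which makes clear which of $\delta_0,\dots,\delta_m$ enter each estimate.
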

\begin{proof}
Fix $0<\sigma\le \sigma_0$. Since $T(\tau-\varepsilon )$ is compact and $P_\sigma$ is continuous and strictly positive on it, for each fixed $\sigma$ there exists $c_\sigma>0$ such that $\inf_{y\in T(\tau-\varepsilon )} P_\sigma(y)\ge c_\sigma$. Hence, with probability tending to one, $\widetilde P_\sigma\ge {c_\sigma}/{2}$ uniformly on $T(\tau-\varepsilon )$ by Lemma~\ref{lem:kernel-density-estimator}. 
Moreover, by the uniform bounds on the derivatives of $P_\sigma$ and the corresponding uniform estimation bounds for $\widetilde P_\sigma$, $(p,\nabla p,\nabla^2 p,\nabla^3 p)$ associated with $P_\sigma$ and $\widetilde P_\sigma$ remain, with probability
tending to one, in a compact subset of the region $\{p\ge c_\sigma/2\}$. Since the map from $(p,\nabla p,\nabla^2 p,\nabla^3 p)$ to the log-density
derivatives is smooth on $\{p> 0\}$, it is Lipschitz on any compact subset of $\{p\ge c_\sigma/2\}$.
It follows that the uniform estimation error for the log-density derivatives is bounded by a constant multiple of the corresponding uniform estimation error for the density derivatives.

Applying Lemma~\ref{lem:kernel-density-estimator} for orders $k=0,1,2,3$ yields the stated rates. The displayed rates are obtained by choosing the bandwidth separately for each order.
\end{proof}

\subsection{Sample-Level Geometric Error Bounds}\label{sec:final bounds}
We now plug the estimators $\widetilde{\mathcal G}_\sigma$, $\widetilde{\mathcal{H}}_\sigma$, $\widetilde{\mathcal{T}}_\sigma$ into the population constructions of Section~\ref{sec:compute-geometry} and quantify the resulting geometric errors. 

For tangent space, we define the sample-level  estimator $\widetilde{T}_yM$ as the span of the eigenvectors of $\widetilde{\mathcal{H}}_\sigma(y)$ associated with its $d$ largest eigenvalues. If the intrinsic dimension $d$ is unknown, it may be estimated from the eigengap of $\widetilde{\mathcal H}_\sigma(y)$, in direct analogue with the population construction in Theorem~\ref{th:tangent-space}. In particular, for sufficiently small $\sigma$, the population eigengap remains uniformly separated on $T(\tau-\varepsilon)$, and the above Hessian bound then also yields consistency of $\widetilde d$, uniformly over $y\in T(\tau-\varepsilon)$.

To compare the true and estimated second fundamental forms as bilinear maps on a common domain, we work with their ambient extensions. For each
$y\in T(\tau-\varepsilon)$, let
\[
\Pi_{\pi(y)}(u,v)
:=
\Pi_{\pi(y)}\big(P_T(y)u,P_T(y)v\big),
\qquad u,v\in\mathbb R^D,
\]
denote the ambient extension of the true second fundamental form. 
The extended sample-level estimator is likewise defined using the estimated tangential projector $\widetilde{P}_T(y)$ obtained from the eigenvectors of $\widetilde{\mathcal{H}}_\sigma(y)$ (see Section~\ref{sec:Pi-codimension}). Accordingly, all operator norms for second fundamental forms below are taken over bilinear maps from $\mathbb R^D\times\mathbb R^D$ to $\mathbb R^D$.

In the hypersurface setting ($D-d=1$), following Theorem~\ref{th:hypersurface} we define
\[
\widetilde{\Pi}_y(u,v)
=
-\frac{1}{\|\widetilde{\mathcal G}_\sigma(y)\|^2}
\big\langle
\widetilde{\mathcal H}_\sigma(y)\widetilde P_T(y)u,
\widetilde P_T(y)v
\big\rangle
\widetilde{\mathcal G}_\sigma(y),
\qquad u,v\in\mathbb R^D.
\]
For submanifolds of arbitrary codimension, following Theorem~\ref{th:general-manifold} we define
\[
\widetilde{\Pi}_y(u,v)
=
\nabla_{\widetilde P_T(y)u}\widetilde P_T(y)\widetilde P_T(y)v,
\qquad u,v\in\mathbb R^D.
\]
\begin{corollary}\label{co:final-bound}
Assume the conditions of Corollary~\ref{co:rates-for-log-density-derivatives}. Then, for each fixed $0<\sigma\le\sigma_0$, the following hold. First, uniformly over $y\in T(\tau-\varepsilon)$,
\begin{align}\label{eq:tangent-sample-error}
 \sin\{\Theta(\widetilde{T}_{y}M,T_{\pi(y)}M)\} = O(\sigma^2) +O_P\big(\big(\frac{\log N}{N}\big)^{\frac{2}{D+8}}\big).
\end{align}
In the hypersurface case, uniformly over $y\in U_\sigma$,
\[
\|\widetilde{\Pi}_y-\Pi_{\pi(y)}\|_{\mathrm{op}} = O(\|v_y\|+\sigma) + O_P\big(\big(\frac{\log N}{N}\big)^{\frac{2}{D+8}}\big),
\]
In the arbitrary codimension setting, uniformly over $y\in T(\tau-\varepsilon)$,
\[
\|\widetilde{\Pi}_y-\Pi_{\pi(y)}\|_{\mathrm{op}} = O(\|v_y\|+\sigma^2)+O_P\big(\big(\frac{\log N}{N}\big)^{\frac{2}{D+10}}\big).
\]
In all three displays, the stochastic constants may depend on $\sigma$.
\end{corollary}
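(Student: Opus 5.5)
The plan is to split each error into a population part and a sampling part with the triangle inequality. For the population part I invoke the bounds of Section~\ref{sec:compute-geometry}. For the sampling part I use stability of each construction under perturbation of $(\mathcal G_\sigma,\mathcal H_\sigma,\mathcal T_\sigma)$, with the uniform rates of Corollary~\ref{co:rates-for-log-density-derivatives} as input. Throughout, $\sigma$ is fixed, so every constant coming from $\sigma^{-2}$ scalings, from $\inf P_\sigma$, or from $\sup\|\mathcal H_\sigma\|$ may depend on $\sigma$.

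\textbf{Tangent space.} I write
\[
\sin\Theta(\widetilde T_yM,T_{\pi(y)}M)\le \sin\Theta(\widetilde T_yM,\widehat T_yM)+\sin\Theta(\widehat T_yM,T_{\pi(y)}M).
\]
Theorem~\ref{th:tangent-space} bounds the second term by $C\sigma^2$. For the first term I apply the Davis--Kahan $\sin\Theta$ theorem. Corollary~\ref{co:noisy-hessian-gap} gives the gap $\lambda_d-\lambda_{d+1}\ge c_\varepsilon\sigma^{-2}$ uniformly on $T(\tau-\varepsilon)$. The perturbation $\|\widetilde{\mathcal H}_\sigma-\mathcal H_\sigma\|_{\mathrm{op}}$ is $O_P((\log N/N)^{2/(D+8)})$ uniformly, so with probability tending to one it is below half the gap. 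Davis--Kahan then gives
\[
\sin\Theta(\widetilde T_yM,\widehat T_yM)\le 2\sigma^2c_\varepsilon^{-1}\|\widetilde{\mathcal H}_\sigma-\mathcal H_\sigma\|_{\mathrm{op}},
\]
uniformly in $y$. The same estimate controls the projectors: $\|\widetilde P_T-\widehat P_T\|_{\mathrm{op}}$ is of the same order, and $\|\widehat P_T-P_T\|_{\mathrm{op}}\le C\sigma^2$.

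\textbf{Hypersurface case.} I compare $\widetilde\Pi_y$ with an ambient extension of $\widehat\Pi_y$ that uses the true projector $P_T(y)$. The population error is bounded by Theorem~\ref{th:hypersurface}. The map
\[
(g,H,P)\mapsto -\|g\|^{-2}\langle HPu,Pv\rangle g
\]
is locally Lipschitz on the region $\|g\|\ge c_0/2$ with $H$ bounded. On $U_\sigma$ we have $\|\mathcal G_\sigma\|\ge c_0$, and by the uniform gradient rate $\|\widetilde{\mathcal G}_\sigma\|\ge c_0/2$ with probability tending to one. Hence the difference is bounded by
\[
C_\sigma\big(\|\widetilde{\mathcal G}_\sigma-\mathcal G_\sigma\|+\|\widetilde{\mathcal H}_\sigma-\mathcal H_\sigma\|_{\mathrm{op}}+\|\widetilde P_T-P_T\|_{\mathrm{op}}\big).
\]
Replacing $P_T$ by $\widetilde P_T$ costs $O(\sigma^2)+O_P(\cdot)$ by the first part, which is absorbed into $O(\|v_y\|+\sigma)$. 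The slowest of these rates is the Hessian rate $(\log N/N)^{2/(D+8)}$.

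\textbf{Arbitrary codimension.} This is the step I expect to be the main obstacle, because it requires controlling the derivative of a spectral projector under perturbation. I represent both projectors as Riesz projectors,
\[
\widetilde P_T=\frac{1}{2\pi i}\oint_\Gamma\big(z-\widetilde{\mathcal H}_\sigma\big)^{-1}dz,
\]
and likewise for $\widehat P_T$. I choose a single contour $\Gamma$ enclosing the top $d$ eigenvalues, at distance at least $c\sigma^{-2}$ from the spectrum, uniformly in $y$. This uniform choice is possible because of the eigengap and because the eigenvalues are uniformly bounded on the compact set. Differentiating in the direction $u$ gives
\[
\nabla_u\widetilde P_T=\frac{1}{2\pi i}\oint_\Gamma R(z)\,\big(\nabla_u\widetilde{\mathcal H}_\sigma\big)\,R(z)\,dz,\qquad R(z)=\big(z-\widetilde{\mathcal H}_\sigma\big)^{-1}.
\]
Subtracting the analogous formula for $\widehat P_T$ and using the resolvent identity, the difference is bounded by
\[
C_\sigma\big(\|\widetilde{\mathcal H}_\sigma-\mathcal H_\sigma\|_{\mathrm{op}}\,\|\nabla\mathcal H_\sigma\|+\|\widetilde{\mathcal T}_\sigma-\mathcal T_\sigma\|_{\mathrm{op}}\big),
\]
uniformly in $y$. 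Here $\|\nabla\mathcal H_\sigma\|$ is bounded on $T(\tau-\varepsilon)$ for fixed $\sigma$.

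It remains to handle the projectors in the arguments. The direction $\widetilde P_Tu$ and the input $\widetilde P_Tv$ differ from $P_Tu$ and $P_Tv$ by $O(\sigma^2)+O_P(\cdot)$, and $\nabla\widehat P_T$ is uniformly bounded. Combining this with Theorem~\ref{th:general-manifold}, applied to the extension $\Pi_{\pi(y)}(P_Tu,P_Tv)$, gives $O(\|v_y\|+\sigma^2)$ plus the slowest stochastic rate. That rate is the third-derivative rate $(\log N/N)^{2/(D+10)}$, which completes the proof.
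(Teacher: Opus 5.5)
Your outline follows the paper's own sketch: the triangle inequality plus Davis--Kahan for the tangent space, a local Lipschitz bound for the hypersurface formula, and a Riesz-projector comparison in arbitrary codimension. The tangent-space and codimension parts go through. The hypersurface step, however, is not justified as written, and the paper's sketch passes over the same point.

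\textbf{The gap.} You bound the difference by $C_\sigma\big(\|\widetilde{\mathcal G}_\sigma-\mathcal G_\sigma\|+\|\widetilde{\mathcal H}_\sigma-\mathcal H_\sigma\|_{\mathrm{op}}+\|\widetilde P_T-P_T\|_{\mathrm{op}}\big)$. You then absorb the deterministic part $C\sigma^2$ of $\|\widetilde P_T-P_T\|_{\mathrm{op}}$ into $O(\|v_y\|+\sigma)$. This fails for the following reasons.
\begin{enumerate}[label=(\roman*)]
\item The generic Lipschitz constant of $(g,H,P)\mapsto-\|g\|^{-2}\langle HPu,Pv\rangle g$ in the $P$-slot is of order $\|H\|_{\mathrm{op}}/\|g\|$.
\item Here $\|\mathcal H_\sigma\|_{\mathrm{op}}\asymp\sigma^{-2}$ because of the normal block. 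On $U_\sigma$ you only know $\|\mathcal G_\sigma\|\ge c_0$, which is sharp near $M$.
\item Hence $C_\sigma\gtrsim\sigma^{-2}$, and $C_\sigma\cdot C\sigma^2$ is $O(1)$, not $O(\sigma)$.
\end{enumerate}
Only the stochastic constants are allowed to depend on $\sigma$. So the deterministic contribution of the projector error needs a $\sigma$-free bound.

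\textbf{The fix.} Expand directly instead of invoking a generic Lipschitz constant. Write $\widetilde P_T=P_T+E$ with $\|E\|_{\mathrm{op}}\le C\sigma^2+\delta_N$, where $\delta_N=O_P((\log N/N)^{2/(D+8)})$. By symmetry of $\mathcal H_\sigma$,
\[
\langle\mathcal H_\sigma\widetilde P_Tu,\widetilde P_Tv\rangle-\langle\mathcal H_\sigma P_Tu,P_Tv\rangle
=\langle Eu,\mathcal H_\sigma P_Tv\rangle+\langle \mathcal H_\sigma P_Tu,Ev\rangle+\langle\mathcal H_\sigma Eu,Ev\rangle .
\]
Bound the three terms as follows.
\begin{enumerate}[label=(\roman*)]
\item Theorem~\ref{th:noisy-hessian} gives $\mathcal H_\sigma P_T=-\sigma^{-2}(I-A_y^{-1})P_T+O(1)$, and $\|I-A_y^{-1}\|_{\mathrm{op}}=O(\|v_y\|)$. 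So the tangential block satisfies $\|\mathcal H_\sigma P_T\|_{\mathrm{op}}\le C(\|v_y\|\sigma^{-2}+1)$.
\item Hence the first two terms are at most $C(\|v_y\|+\sigma^2)+C_\sigma\delta_N$.
\item The last term is at most $C\sigma^{-2}\|E\|_{\mathrm{op}}^2\le C\sigma^2+C_\sigma\delta_N^2$.
\end{enumerate}
After dividing by $\|\widetilde{\mathcal G}_\sigma\|\ge c_0/2$, the replacements $\mathcal G_\sigma\to\widetilde{\mathcal G}_\sigma$ and $\mathcal H_\sigma\to\widetilde{\mathcal H}_\sigma$ contribute only stochastic terms. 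Your Lipschitz argument is fine for those.

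\textbf{Two smaller points on the codimension case.} The step there is correct, but:
\begin{enumerate}[label=(\roman*)]
\item You should state explicitly that $\|\nabla\widehat P_T\|_{\mathrm{op}}$ is bounded uniformly in $\sigma$ as well as in $y$. This follows from $\|\nabla_u\widehat P_T-\nabla_uP_T\|_{\mathrm{op}}\le C\sigma^2$ in the proof of Theorem~\ref{th:general-manifold}, which holds for every direction $u$.
\item Bounding $\nabla_u\widetilde{\mathcal H}_\sigma-\nabla_u\mathcal H_\sigma$ by $\|\widetilde{\mathcal T}_\sigma-\mathcal T_\sigma\|_{\mathrm{op}}$ presumes that $\widetilde{\mathcal H}_\sigma$ in that estimator is built with bandwidth $h_3$. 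Its Hessian error is then still $O_P((\log N/N)^{2/(D+10)})$. Differentiating an $h_2$-based Hessian would only give the slower rate $(\log N/N)^{1/(D+8)}$.
\end{enumerate}
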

\begin{proof}[Sketch]
Fix $0<\sigma\le \sigma_0$. For the tangent-space bound, Corollary~\ref{co:noisy-hessian-gap} shows that, uniformly on $T(\tau-\varepsilon)$, $\mathcal H_\sigma(y)$ has a spectral gap of order $\sigma^{-2}$ separating its largest $d$ eigenvalues from the remaining $D-d$ eigenvalues. Therefore, for each fixed $\sigma$, Davis--Kahan's $\sin\Theta$ theorem applied to $\widetilde{\mathcal H}_\sigma(y)$ and $\mathcal H_\sigma(y)$ yields
\[
\sin\{ \Theta\big(\widetilde T_yM, \widehat T_yM\big)\}
\le
C\|\widetilde{\mathcal H}_\sigma(y)-\mathcal H_\sigma(y)\|_{\mathrm{op}}.
\]
This Hessian error is controlled by Corollary~\ref{co:rates-for-log-density-derivatives}, and combining this with the population estimate in Theorem~\ref{th:tangent-space} and the triangle inequality gives \eqref{eq:tangent-sample-error}.

In the hypersurface case, by the lower bound assumption on $\|\mathcal G_\sigma(y)\|$ over $U_\sigma$, Corollary~\ref{co:rates-for-log-density-derivatives} implies that $\|\widetilde{\mathcal G}_\sigma(y)\|$ remains bounded away from zero with high probability on $U_\sigma$. 
Therefore, on the region where $\|\mathcal G_\sigma\|,\|\widetilde{\mathcal G}_\sigma\|\ge c_0/2$,
the map 
\[
(\mathcal G,\mathcal H,P)\mapsto -\|\mathcal G\|^{-2} \langle \mathcal HPu,Pv\rangle \mathcal G
\]
is locally Lipschitz for fixed $\sigma$. It follows that the estimation error can be controlled by 
\[
\|\widetilde{\mathcal G}_\sigma-\mathcal G_\sigma\|,
\qquad
\|\widetilde{\mathcal H}_\sigma-\mathcal H_\sigma\|_{\mathrm{op}},
\qquad
\|\widetilde P_T-P_T\|_{\mathrm{op}}.
\]
The first two terms are bounded by Corollary~\ref{co:rates-for-log-density-derivatives}, and the last one is bounded by \eqref{eq:tangent-sample-error}. Combining this with the population bound from Theorem~\ref{th:hypersurface} gives the stated result.

In arbitrary codimension, the comparison between $\nabla \widetilde P_T(y)$ and $\nabla \widehat P_T(y)$ is a direct analogue of the Riesz-projector comparison carried out in the proof of Theorem~\ref{th:general-manifold}. By that argument, the difference of the projector derivatives is controlled by
\[
\|\widetilde{\mathcal H}_\sigma-\mathcal H_\sigma\|_{\mathrm{op}},
\qquad
\|\nabla \widetilde{\mathcal H}_\sigma-\nabla \mathcal H_\sigma\|_{\mathrm{op}},
\qquad
\|\nabla \mathcal H_\sigma\|_{\mathrm{op}}.
\]
For fixed $\sigma$, the last quantity is bounded on $T(\tau-\varepsilon)$, while the first two terms are controlled by Corollary~\ref{co:rates-for-log-density-derivatives}. Among them, the second term gives the dominated order $O_P\big(({\log N}/{N})^{\frac{2}{D+10}}\big)$. The ambient extension in \eqref{eq:pi-general-manifold} introduces additional factors of $\widetilde P_T(y)$ and $\widehat P_T(y)$, which are controlled by \eqref{eq:tangent-sample-error}. Combining this with Theorem~\ref{th:general-manifold} proves the claim.

\end{proof}
A notable feature of these rates is that they depend on the ambient dimension $D$, rather than the intrinsic dimension $d$.
This is inherent to the present estimator class. For each fixed $\sigma>0$, the target $P_\sigma = P_M*\mathcal N(0,\sigma^2 I_D)$ is a smooth bounded density on $\mathbb R^D$, and our sample-level analysis is based on estimating this ambient-space density and its derivatives by KDE-type methods.
Thus the resulting rates reflect the classical nonparametric difficulty of estimating a $D$-dimensional density; see, for example, \citet{goldenshluger2014adaptive}.
By contrast, the manifold structure enters primarily through the population-level geometric identities developed earlier. Obtaining sample-level bounds that adapt to the intrinsic dimension would likely require a different estimator class, one that more directly exploits the manifold structure.

\begin{remark}[Alternative estimators]
The sample-level arguments above use the kernel estimator only through uniform control of the estimated log-density derivatives. Accordingly, the same plug-in geometric estimators apply to any alternative estimators of
\[
\mathcal G_\sigma=\nabla\log P_\sigma,\qquad
\mathcal H_\sigma=\nabla^2\log P_\sigma,\qquad
\mathcal T_\sigma=\nabla^3\log P_\sigma
\]
that satisfy analogous bounds on the relevant region.

Possible alternatives include direct score estimators in nonparametric function classes, score-matching estimators, and smooth parametric models such as neural
score networks; see, for example, \citet{Hyvrinen2005EstimationON,Sriperumbudur2017,li2018gradient,Song2019SlicedSM}.
\end{remark}

\section{Numerical Experiments}\label{sec:numerical}
In this section we empirically evaluate the proposed estimators for tangent spaces and curvature on simulated datasets. Our theoretical analysis in previous sections shows that the geometry of the underlying manifold can be recovered from the derivatives of the manifold-generated noisy density $P_\sigma$, with error bounds that depend explicitly on the noise level $\sigma$ and the sample size $N$. The numerical experiments complement these results by studying how the estimation error scales with $\sigma$ and $N$, and by comparing our method with existing approaches for manifold geometry estimation.

We focus on manifolds for which tangent spaces and second fundamental forms are available in closed form. In particular, we consider a two-dimensional torus embedded in $\mathbb{R}^3$ (a hypersurface) and the two-dimensional Clifford torus embedded in $\mathbb{R}^4$ (a submanifold of codimension two). These examples exhibit nontrivial and spatially varying curvature while remaining simple enough to admit accurate numerical ground truth. In Section~\ref{sec:exp-setup} we describe the experimental setup. Sections~\ref{sec:exp-tangent}--\ref{sec:exp-curv-clifford} report tangent space and curvature estimation results on the torus and the Clifford torus, and Section~\ref{sec:exp-geodesics} investigates geodesics under the gradient-based density-induced metric.

\subsection{Experimental Setup}
\label{sec:exp-setup}
We first describe the data generation procedure, the estimator construction, the error metrics, and the baseline and hyperparameter choices.
\subsubsection{Data Generation} Let $M \subset \mathbb{R}^D$ be one of the manifolds described above, endowed with its induced Riemannian metric and volume measure. For each choice of noise level $\sigma$ and sample size $N$, we generate noisy data by first sampling i.i.d. points $x_1,\dots,x_N$ uniformly on $M$, and then adding independent Gaussian noise
\[
  y_i = x_i + \xi_i, \qquad \xi_i \sim \mathcal{N}(0,\sigma^2 I_D).
\]
The resulting observations $\mathcal{Y} = \{y_i\}_{i=1}^N $ are i.i.d. samples from the noisy distribution $P_\sigma$, consistent with the population-level analysis of Section~\ref{sec:density-and-derivatives}.

To isolate the effect of the noise level $\sigma$ and the sample size $N$ from the additional error due to the normal offset $\|v_y\|$, we evaluate the geometric estimators at points in a thin tubular neighborhood of $M$. Concretely, we generate an independent set of evaluation points $\mathcal W = \{w_i\}_{i=1}^{\bar{N}}$ such that
\[
  c_1 \sigma^2 \log(1/\sigma)
  \leq
  d(M,w_i)
  \leq
  c_2 \sigma^2 \log(1/\sigma),
  \qquad i=1,\ldots,\bar{N},
\]
for fixed constants $0 < c_1 < c_2$. Throughout the experiments we take $c_1 = 1/2$, $c_2 = 2$, and $\bar{N} = 50$.
These points can be viewed as proxies for the output of a manifold reconstruction step. This choice is motivated by existing manifold fitting results that achieve Hausdorff accuracy on the order of $O\big(\sigma^2\log(1/\sigma)\big)$ in the small-noise regime (see, e.g., \citealp{Genovese2014Ridge, yao2023manifold}), so that the $\|v_y\|$-dependent terms in our bounds decay at a comparable scale as $\sigma\to 0$.

\subsubsection{Kernel Density and Derivative Estimators}
At the sample level we follow Section~\ref{sec:sample-level-estimator} and estimate the noisy density $P_\sigma$ and its derivatives using kernel methods. In the numerical experiments we use the standard Gaussian kernel
\[
  K(u) = (2\pi)^{-D/2} \exp(-\tfrac{1}{2}\|u\|^2),
  \qquad u \in \mathbb{R}^D.
\]
As suggested by Lemma~\ref{lem:kernel-density-estimator}, for each estimator that involves derivatives of order $m \in \{0,1,2,3\}$ of $P_\sigma$, we choose the bandwidth according to
\[
  h_m
  = c \big(\frac{\log N}{N}\big)^{1/(D + 4 + 2m)}.
\]
In the experiments, we choose the constants $c$ in a simple, data-dependent way following Scott’s rule \citep{Scott1992MultivariateDE}. Specifically, we take $c$ to be the average standard deviation of the sample $\mathcal{Y}$ along the largest $d$ principal components. For each geometric estimator we use the bandwidth associated with the highest-order derivative that it requires:
\begin{itemize}
  \item the gradient-based metric in Section~\ref{sec:exp-geodesics} uses $h_1$;
  \item the tangent space estimator and the hypersurface curvature estimator (which rely on the Hessian of the log-density) use $h_2$;
  \item the general-codimension curvature estimator (which additionally uses the third derivative of the log-density) uses $h_3$.
\end{itemize}

\subsubsection{Error Metrics}
For a point $w \in \mathcal W$ we denote by $T_{\pi(w)} M$ the true tangent space and by $\widetilde{T}_{w} M$ its estimate. The error in tangent space estimation is quantified by the sine of the largest principal angle between the two subspaces,
\[
  \mathrm{err}_{\mathrm{tan}}(w)
  := \sin \Theta\{\big(T_{\pi(w)} M, \widetilde{T}_{w} M\big)\}.
\]
For curvature we focus on the mean curvature vector $H_x \in T_x^\perp M$, which is a normal field on $M$ obtained by averaging the second fundamental form over an orthonormal basis of $T_x M$. Denoting by $\widetilde{H}_w$ the estimator derived from the estimated second fundamental form, we define the pointwise curvature error as
\[
  \mathrm{err}_{\mathrm{curv}}(w)
  := \|H_{\pi(w)} - \widetilde{H}_w\|_2.
\]
For each configuration of $(\sigma,N)$ we summarize the distribution of these errors over the evaluation set $\mathcal W$ by box plots.

\subsubsection{Baselines and Hyperparameter Selection}
Our main comparisons are with the following baseline methods.
\begin{itemize}
  \item For tangent space estimation, we consider LPCA as a classical baseline and the manifold diffusion geometry estimator of JI24~\citep{Jones2025ManifoldDG}. In both cases the tangent space at $\pi(w)$ is estimated from observations in a neighborhood of $w$. For LPCA we use a kernel-weighted covariance matrix based on the same Gaussian kernel as above, with bandwidth
    \[
      h_{\mathrm{LPCA}}
      = c N^{-1/(d+4)},
    \]
    where the constant $c$ is the same as in our estimator so that the effective local scale is comparable. JI24 requires little additional tuning, and we follow their standard implementation without additional tuning.

  \item For curvature estimation, we compare our method with JI24 and with the Weingarten map estimator of CY19~\citep{Cao2019EfficientWM}, which fits the shape operator from local covariance information. The optimal bandwidth proposed in CY19 is of the order $O(N^{-1/(d+4)})$, and we therefore set
    \[
      h_{\mathrm{WM}}
      = c N^{-1/(d+4)},
    \]
    again using the same constant $c$ as above. The JI24 curvature estimator uses its default implementation.
\end{itemize}
In each experiment we first carry out a preliminary comparison that includes all available baselines. Based on the median error over the evaluation set $\mathcal W$, we then focus on the stronger baseline in the more detailed comparisons.

\subsubsection{Parameter Grids}
To investigate the asymptotic behavior in $\sigma$ and $N$, we consider two families of configurations. In the first we fix $N = 10^4$ and vary
\[
   \sigma \in \{0.01, 0.025, 0.05, 0.075, 0.1, 0.5, 1.0\}.
\]
In the second we fix $\sigma = 0.05$ and vary
\[
   N \in \{300, 3000, 30000, 300000\}.
\]
These grids are used in the scaling experiments for our proposed method (Figures~\ref{fig:tangent-torus}, \ref{fig:curv-torus}, and \ref{fig:curv-clifford}). In the baseline comparisons we additionally consider smaller grids, fixing $N = 3000$ and varying $\sigma \in \{0.01, 0.1, 0.5\}$, or fixing $\sigma = 0.05$ and varying $N \in \{300, 1000, 3000\}$, as specified below.

\begin{figure}[!t]
    \centering
    \includegraphics[width=0.9\linewidth]{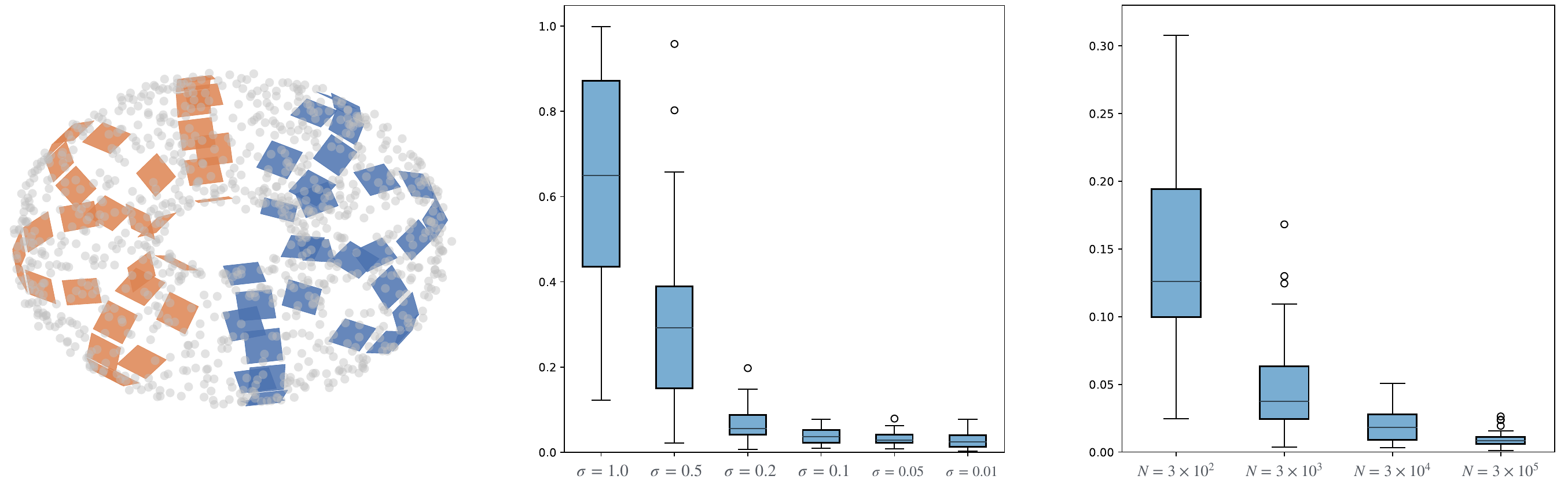}
    \caption{Tangent space estimation on a noisy torus. Left: noisy samples $\mathcal Y$ (gray), together with the true tangent planes (orange) and the estimated tangent planes (blue) at representative points on the torus for $N=1000$ and $\sigma=0.05$. Middle and right: box plots of the tangent space error $\mathrm{err}_{\mathrm{tan}}$ over the evaluation set $\mathcal W$ as functions of the noise level $\sigma$ for fixed $N = 10^4$ and of the sample size $N$ for fixed $\sigma = 0.05$, respectively.}
    \label{fig:tangent-torus}
\end{figure}

\begin{figure}[!t]
    \centering
    \includegraphics[width=0.9\linewidth]{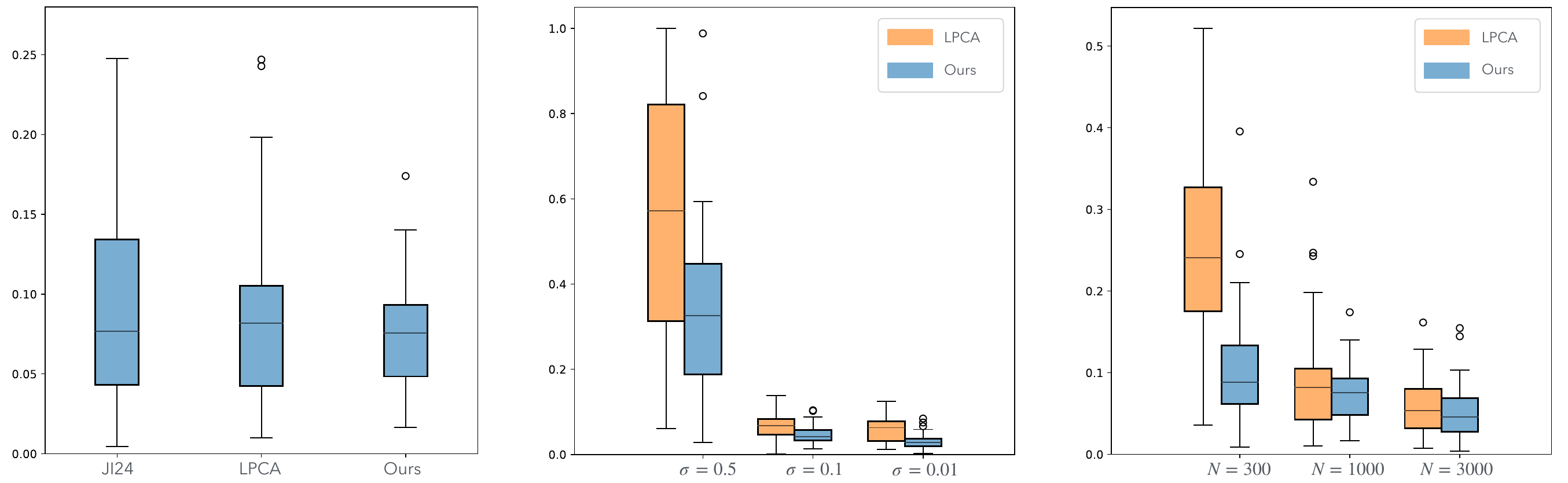}
    \caption{Comparison of tangent space estimators on the torus. Left: box plots of tangent space errors for $N = 1000$ and $\sigma = 0.05$, comparing LPCA, the diffusion-geometry estimator JI24, and the proposed Hessian-based estimator. Middle and right: errors of LPCA and of the proposed estimator as functions of the noise level $\sigma$ for fixed $N = 3000$ and of the sample size $N$ for fixed $\sigma = 0.05$, respectively.}
    \label{fig:tangent-comparison}
\end{figure}

\subsection{Tangent Space Estimation on the Torus}
\label{sec:exp-tangent}
We first study the tangent space estimator obtained from the Hessian of the log-density, as in Theorem~\ref{th:tangent-space}. For an evaluation point $w \in \mathcal W$ we compute the Hessian $\widetilde{\mathcal H}_\sigma(w)$ of the estimated log-density and define $\widetilde{T}_w M$ as the span of its $d$ largest eigenvectors.

The underlying manifold is a two-dimensional torus embedded in $\mathbb{R}^3$. We investigate the dependence of the tangent space error on both the noise level $\sigma$ and the sample size $N$ using the parameter grids described in Section~\ref{sec:exp-setup}. Figure~\ref{fig:tangent-torus} illustrates the experimental design and summarizes the quantitative results. For moderate noise levels $\sigma \leq 0.5$, the principal angle error remains well below $\sin 45^\circ$ across the torus. At the extreme noise level $\sigma = 1$, the error begins to exceed this threshold, indicating that the manifold is no longer clearly identifiable from the noisy observations. As $N$ increases the error decreases rapidly, and the estimator remains effective even at relatively small sample sizes such as $N = 300$.

We next compare our estimator with the LPCA and JI24 tangent space estimators. For a representative configuration $N = 1000$, $\sigma = 0.05$, Figure~\ref{fig:tangent-comparison} (left) displays the error distributions of all three methods. In this setting LPCA yields noticeably smaller errors than JI24, and we therefore focus on LPCA in the more detailed comparisons. The middle and right panels of Figure~\ref{fig:tangent-comparison} report the errors of LPCA and the proposed estimator as functions of $\sigma$ and $N$, respectively. Across these regimes our estimator consistently yields smaller errors than LPCA, especially at smaller sample sizes and for larger noise levels, in line with the higher-order dependence on $\sigma$ predicted by Theorem~\ref{th:tangent-space}.

\begin{figure}[!t]
    \centering
    \includegraphics[width=0.9\linewidth]{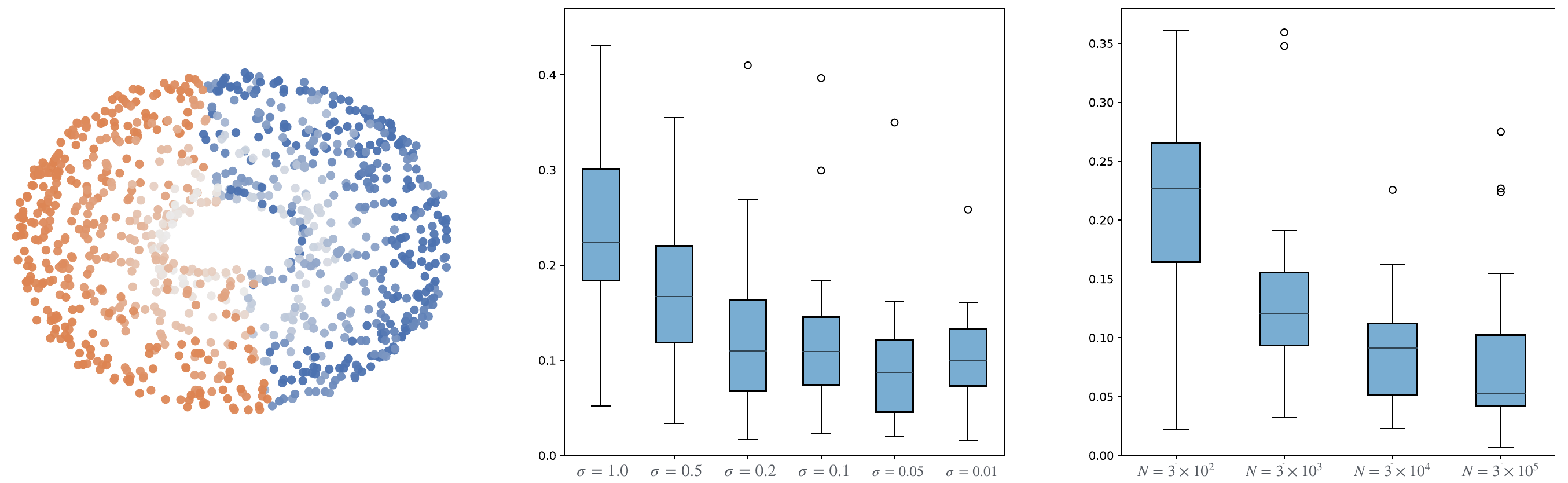}
    \caption{Mean curvature estimation on the torus. Left: norm of the true mean curvature vectors (orange) and of the estimated mean curvature vectors (blue) on the torus for $N=1000$ and $\sigma=0.05$; darker colors indicate larger norm. Middle and right: box plots of the curvature error $\mathrm{err}_{\mathrm{curv}}$ over the evaluation set $\mathcal W$ as functions of the noise level $\sigma$ for fixed $N = 10^4$ and of the sample size $N$ for fixed $\sigma = 0.05$, respectively.}
    \label{fig:curv-torus}
\end{figure}

\begin{figure}[!t]
    \centering
    \includegraphics[width=0.9\linewidth]{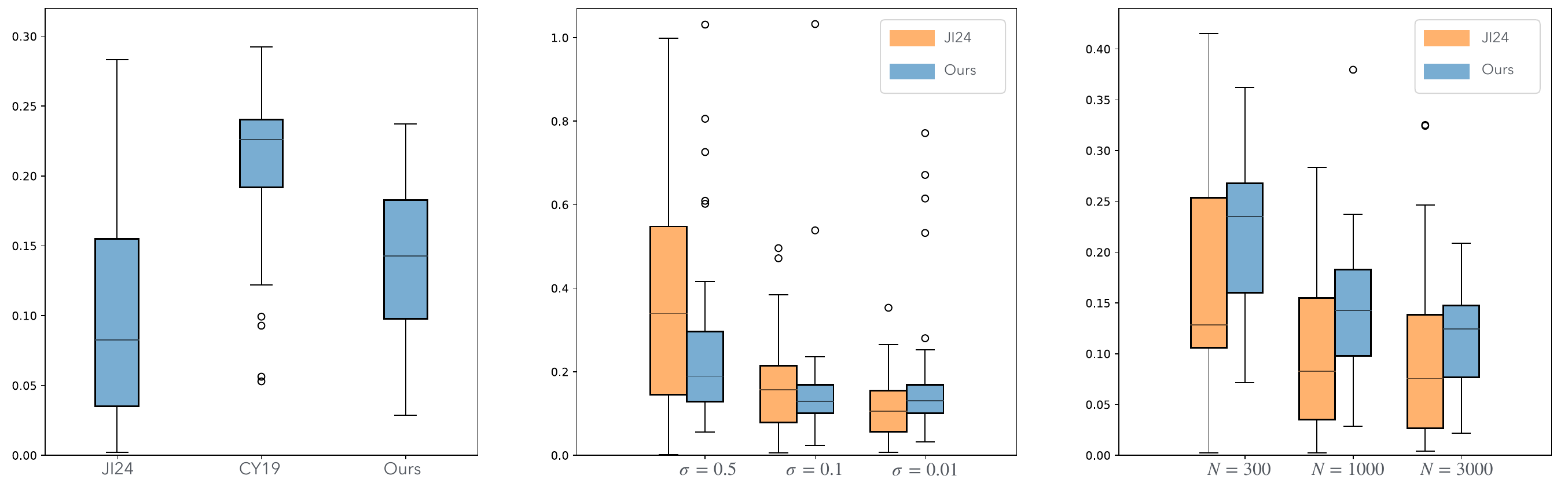}
    \caption{Comparison of mean curvature estimators on the torus. Left: box plots of curvature errors for $N = 1000$ and $\sigma = 0.05$, comparing the Weingarten map estimator of CY19, the diffusion-geometry estimator JI24, and the proposed hypersurface estimator. Middle and right: curvature errors of JI24 and of the proposed estimator as functions of the noise level $\sigma$ for fixed $N = 3000$ and of the sample size $N$ for fixed $\sigma = 0.05$, respectively.}
    \label{fig:curv-torus-comp}
\end{figure}

\subsection{Curvature Estimation on the Torus}
\label{sec:exp-curv-torus}
We now turn to curvature estimation on the same two-dimensional torus in $\mathbb{R}^3$, treating it as a hypersurface. Using the hypersurface second fundamental form estimator from Theorem~\ref{th:hypersurface}, we obtain an estimate $\widetilde{\Pi}$ at each evaluation point and compute the corresponding mean curvature vector $\widetilde{H}$ by averaging $\widetilde{\Pi}$ over an orthonormal basis of the estimated tangent space. This yields the curvature error $\operatorname{err}_{\mathrm{curv}}$ defined in Section~\ref{sec:exp-setup}.

Figure~\ref{fig:curv-torus} visualizes the norm of the mean curvature vector on the torus and summarizes the dependence of the curvature error on $\sigma$ and $N$. The estimator is accurate for moderate noise levels and improves with increasing sample size, but curvature estimation is noticeably less stable than tangent space estimation: the variance of $\operatorname{err}_{\mathrm{curv}}$ is larger, and a few outliers with relatively large errors are present. This is consistent with the theoretical bounds, which scale as $O(\sigma)$ with respect to the noise level for the hypersurface curvature estimator (Theorem~\ref{th:hypersurface}), compared with $O(\sigma^2)$ for the tangent space estimator (Theorem~\ref{th:tangent-space}).

We compare the proposed method with the Weingarten map estimator of CY19 and the curvature estimator of JI24. For a representative configuration $N = 1000$, $\sigma = 0.05$, Figure~\ref{fig:curv-torus-comp} (left) shows that CY19 performs substantially worse than both JI24 and our estimator. We therefore take JI24 as the main curvature baseline in the subsequent plots. The middle and right panels of Figure~\ref{fig:curv-torus-comp} display the curvature errors of JI24 and of the proposed estimator as functions of $N$ and $\sigma$, respectively. For the small-noise level $\sigma = 0.05$ our estimator has a smaller variance and a smaller maximum error than JI24 but a slightly larger median error, suggesting that JI24 may converge more rapidly in the asymptotically small-noise regime. On the other hand, as the noise level increases beyond $\sigma \approx 0.1$ our estimator becomes substantially more robust than JI24, maintaining relatively small errors while the performance of JI24 deteriorates.

\begin{figure}[!t]
    \centering
    \includegraphics[width=0.9\linewidth]{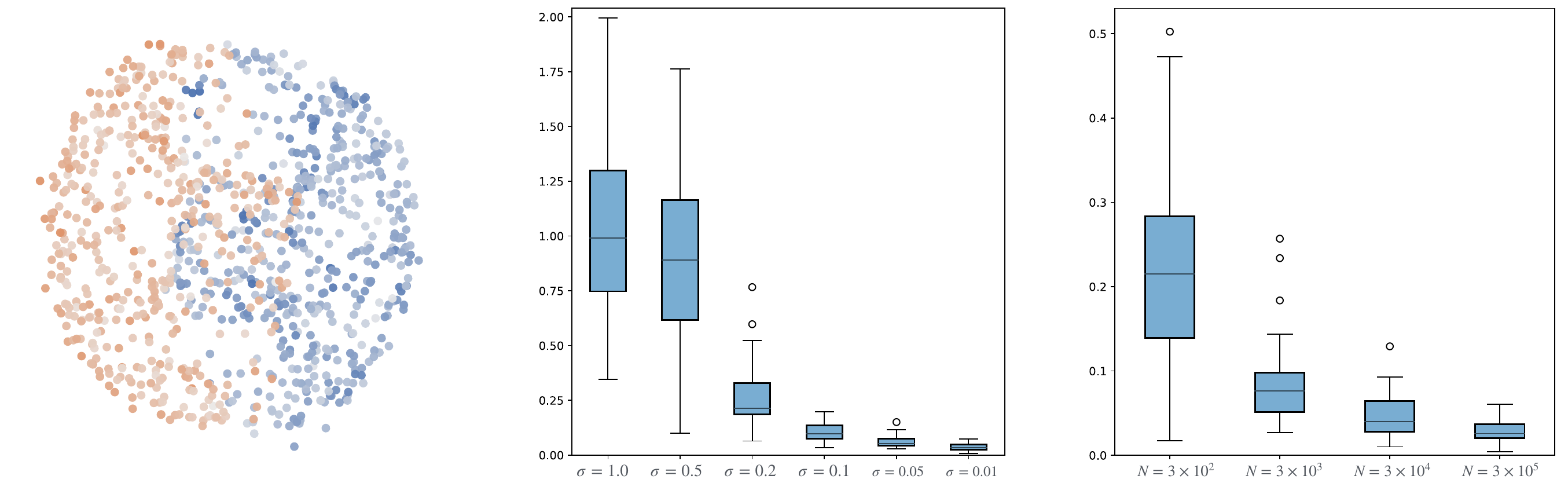}
    \caption{Mean curvature estimation on the Clifford torus in $\mathbb{R}^4$. Left: norm of the true mean curvature vectors (orange) and of the estimated mean curvature vectors (blue) on the Clifford torus for $N=1000$ and $\sigma=0.05$; darker colors indicate larger norm. Middle and right: box plots of the curvature error $\mathrm{err}_{\mathrm{curv}}$ over the evaluation set $\mathcal W$ as functions of the noise level $\sigma$ for fixed $N = 10^4$ and of the sample size $N$ for fixed $\sigma = 0.05$, respectively.}
    \label{fig:curv-clifford}
\end{figure}

\begin{figure}[!t]
    \centering
    \includegraphics[width=0.9\linewidth]{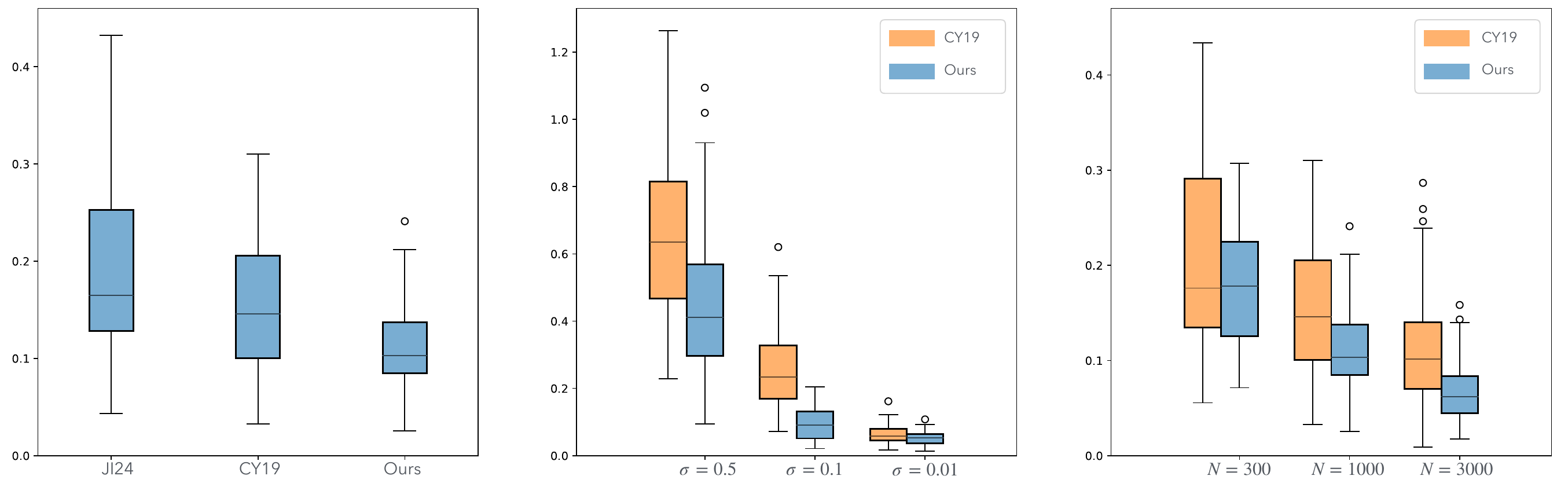}
    \caption{Comparison of mean curvature estimators on the Clifford torus. Left: box plots of curvature errors for $N = 1000$ and $\sigma = 0.05$, comparing CY19, JI24, and the proposed general-codimension estimator. Middle and right: curvature errors of CY19 and of the proposed estimator as functions of the noise level $\sigma$ for fixed $N = 3000$ and of the sample size $N$ for fixed $\sigma = 0.05$, respectively.}
    \label{fig:curv-clifford-comp}
\end{figure}

\subsection{Curvature Estimation on the Clifford Torus}
\label{sec:exp-curv-clifford}
To evaluate the estimator for the second fundamental form of submanifolds with general codimension, we consider the two-dimensional Clifford torus embedded in $\mathbb{R}^4$. In contrast to the hypersurface case, the normal space now has dimension two, and the second fundamental form has nontrivial components in multiple normal directions, making this a more challenging setting. We use the general-codimension estimator of Theorem~\ref{th:general-manifold} to obtain an estimate $\widetilde{\Pi}$ at each evaluation point on the Clifford torus, and define the estimated mean curvature vector $\widetilde{H}$ and error $\operatorname{err}_{\mathrm{curv}}$ as in Section~\ref{sec:exp-curv-torus}.

Figure~\ref{fig:curv-clifford} visualizes the norm of the mean curvature on the Clifford torus and summarizes the dependence of the curvature error on $\sigma$ and $N$. The proposed estimator achieves accurate mean curvature estimates across a wide range of noise levels and sample sizes. Although the general-codimension estimator is more complex than the hypersurface counterpart, it exhibits significantly better convergence behavior, which is consistent with its $O(\sigma^2)$ convergence rate.

Figure~\ref{fig:curv-clifford-comp} compares the proposed estimator with CY19 and JI24. While JI24 is more accurate than the local fitting method CY19 on two-dimensional hypersurfaces, it does not perform consistently across the parameter regimes considered in this higher-codimension setting. Meanwhile, the proposed estimator consistently outperforms CY19, with smaller errors and lower variance across the parameter regimes considered.

\begin{figure}[!t]
    \centering
    \includegraphics[width=0.9\linewidth]{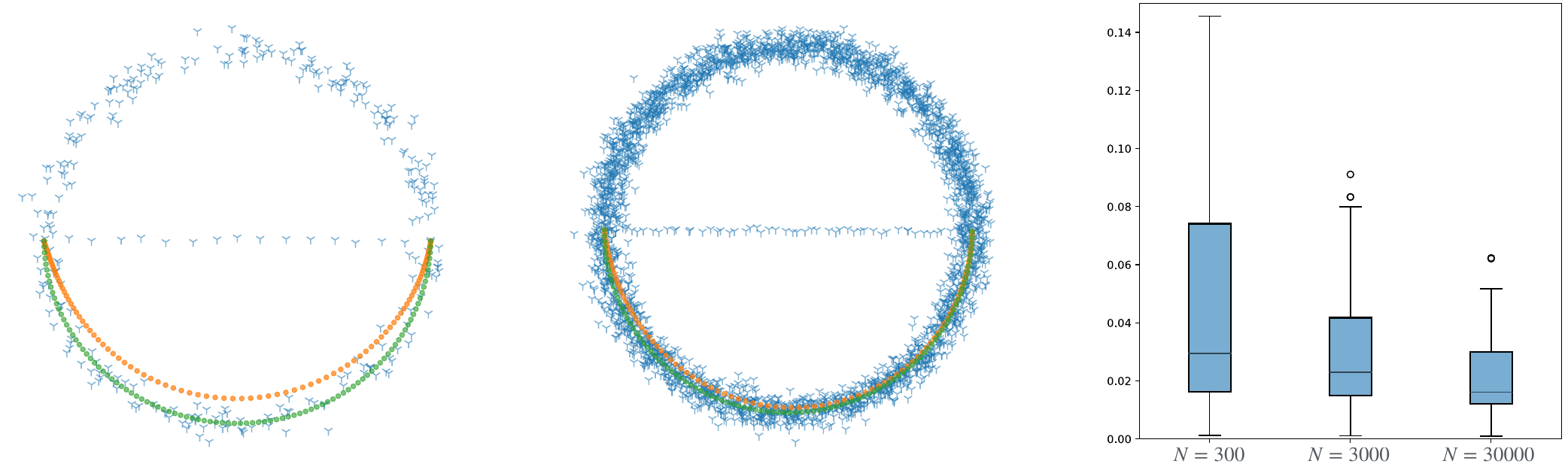}
    \caption{Geodesics under the gradient-based metric in the presence of shortcut perturbations. Left and middle: true geodesic (green) on the unit circle $S^1$ and estimated $g_{P_\sigma}$-geodesics (orange) between two fixed endpoints for fixed $\sigma=0.05$, $N = 300$ and $N = 3000$, respectively. Right: Hausdorff distance between the computed geodesics and the true geodesic on $S^1$ as a function of the sample size $N$ for fixed $\sigma=0.05$.}
    \label{fig:perturbed geodesics}
\end{figure}

\subsection{Geodesics of the Gradient-based Metric}
\label{sec:exp-geodesics}
Finally, we investigate the robustness of the density-induced metrics introduced in Section~\ref{sec:metric-change}. In particular, we focus on the gradient-based degenerate metric. Recall that for a noisy distribution $P_\sigma$ we define the degenerate metric,
\[
  g_{P_\sigma} := d(\log P_\sigma)\otimes d(\log P_\sigma).
\]
Theorem~\ref{th:metric-hypersurface} shows that, for a manifold-generated density, intrinsic geodesics on $M$ satisfy the ambient acceleration equation associated with $g_{P_\sigma}$ up to an $O(\sigma)$ error.

To test the stability of geodesics under perturbations of the data, we introduce an artificial shortcut perturbation, analogous to the numerical shortcut experiment of \citet{Taupin2025FermatDA}. Fix $\sigma = 0.05$ and let $N \in \{300, 3000, 30000\}$. For each $N$ and each repetition, we first randomly select two evaluation points $w_1, w_2 \in \mathcal W$. We then perturb the data by inserting $\sqrt{N}$ additional points along the straight line segment connecting $w_1$ and $w_2$ in the ambient space, and we further perturb these shortcut points by independent Gaussian noise with the same variance $\sigma = 0.05$. This creates an artificial density bridge between the two endpoints.

For the true geodesic, we project $w_1$ and $w_2$ onto the manifold, setting $x_1 = \pi(w_1)$ and $x_2 = \pi(w_2)$, and compute the corresponding intrinsic geodesic between $x_1$ and $x_2$ analytically. For the estimated geodesic, we start from an initial discretized path between $w_1$ and $w_2$ with $100$ intermediate points and minimize its $g_{P_\sigma}$-length using gradient-based optimization while keeping the endpoints fixed. We then compare the resulting path with the true geodesic by computing their Hausdorff distance in the ambient Euclidean space. For each $N$ we repeat this procedure over $50$ random choices of $(w_1,w_2) \in \mathcal W \times \mathcal W$ and summarize the resulting Hausdorff distances by box plots.

Figure~\ref{fig:perturbed geodesics} shows representative examples of the computed geodesics for $N = 300$ (left) and $N = 3000$ (middle), together with box plots of the Hausdorff distance as a function of $N$ (right). The distances decrease as the sample size grows and remain small even in the presence of the shortcut points, indicating that the gradient-based metric $g_{P_\sigma}$ keeps geodesics close to the underlying manifold and is robust to this type of perturbation.

\section{Proofs}\label{sec:proof}
This section collects proofs of the main results in Section~\ref{sec:density-and-derivatives}--\ref{sec:sample-level-estimator}.

\subsection{Proofs for Section~\ref{sec:density-and-derivatives}}
\label{sec:pf-density-derivative}
We first prove the results stated in Section~\ref{sec:density-and-derivatives}.

\subsubsection{Proof of Theorem~\ref{th:probability-noisy}}\label{sec:pf-probability noisy}
\begin{proof}
Write $x_0:=\pi(y)$. Recall that $A_y:=I_{T_{\pi(y)}M}-\langle v_y,\Pi_{x_0}\rangle$. Using $\|\Pi_{x}\|_{\mathrm{op}}\le \tau^{-1}$ (Lemma~\ref{lem:second-fundamental-form-bound}) and $\|v_y\|\le \tau-\varepsilon$,
\[
\lambda_{\min}(A_y)\ \ge\ 1-\|v_y\|/\tau\ \ge\ \varepsilon/\tau\ =:\ c_\varepsilon>0,
\]
so $A_y$ is uniformly positive definite on $T(\tau-\varepsilon)$. 

Since $M$ is compact, we may choose a finite smooth atlas and, on each chart, a smooth orthonormal frame
$e_1(x),\dots,e_d(x)$ for $TM$. Using this frame, we identify each tangent space $T_xM$ locally with
$\mathbb R^d$ by
\[
u=(u^1,\dots,u^d)\in\mathbb R^d 
\quad\longleftrightarrow\quad
\sum_{i=1}^d u^i e_i(x)\in T_xM.
\]
We work in exponential coordinates at $ x_0$ and write $x_0(u):=\exp_{x_0}(u)$ for $u\in \mathbb R^d$. Choose $r_0\in(0,\mathrm{inj}_M)$ sufficiently small so that, uniformly in $x_0\in M$, the exponential chart is well defined on $B_d(0,r_0)$ and the expansions in Lemma~\ref{lem:exponential-map-expansion} and Lemma~\ref{lem:volume-element-expansion} hold there. Then in these coordinates
\[
\sqrt{\det g_{x_0}(u)}=1+Q_g(y,u),\qquad |Q_g(y,u)|\le C_g\|u\|^2, 
\]
\[
x_0(u)=x_0+u+\tfrac12\Pi_{x_0}(u,u)+Q_e(y,u),\qquad \|Q_e(y,u)\|\le C_e\|u\|^3.
\] 
Moreover, by the regularity of exponential map, metric coefficients, and projection $\pi$, we have
\[
|\partial_y^\alpha Q_g(y,u)|\le C_\alpha \|u\|^2,
\quad
\|\partial_y^\alpha Q_e(y,u)\|\le C_\alpha \|u\|^3,\qquad |\alpha|\le3.
\]
Expanding $\|y-x_0(u)\|^2$ after inserting the expression of $x_0(u)$, and using the orthogonality $u\perp \Pi_{x_0}(u,u)$, one obtains
\[
\|y-x_0(u)\|^2 = \|v_y\|^2 + u^\top A_y u + R(y,u),\qquad |R(y,u)|\le C\big(\|v_y\|\|u\|^3+\|u\|^4\big).
\]
Since $R(y,u)$ is obtained by combining the cubic remainder $Q_e(y,u)$ with bounded coefficients that depend $C^3$-smoothly on $y$, differentiating in $y$ preserves the cubic order:
\[
|\partial_y^\alpha R(y,u)|\le C_\alpha\|u\|^3, \qquad |\alpha|\le3.
\]
In particular, on the compact set $T(\tau-\varepsilon)\times B_d(0,r)$ we can take the above constants uniformly. Indeed, by shrinking $r_0$ if necessary, we also have 
\[
|R(y,u)|\le \frac{c_\varepsilon}{4}\|u\|^2, \qquad \|u\|\le r_0.
\]
Fix any $r\in(0,r_0)$. Let $\rho\in C^k(\mathbb R^d)$ satisfy
\[
0\le \rho \le 1,\qquad
\rho(u)=1 \text{ for } \|u\|\le r/2,\qquad
\rho(u)=0 \text{ for } \|u\|\ge r.
\]
Define a cutoff function on $M$ by
\[
\chi_y(x)= \rho(\exp_{x_0}^{-1}(x)) \text{ for } x\in \exp_{x_0}(B_d(0,r_0)),\qquad \chi_y(x)= 0 \text{ for } x\notin \exp_{x_0}( B_d(0,r_0)).
\]
By the regularity of the local inverse exponential map and the projection map, $(y,x)\mapsto \rho(\exp_{\pi(y)}^{-1}(x))$ is $C^{k-2}$ on $\{(y,x): y\in T(\tau-\varepsilon),\ d_M(x,\pi(y))< r_0\}$.
Moreover, since $\rho$ is compactly supported on $B_d(0,r)$ with $r<r_0$, the zero extension of $\chi_y(x)$ defines $C^{k-2}$ function on
$T(\tau-\varepsilon)\times M$.
By compactness of $T(\tau-\varepsilon)\times M$, we further have a uniform bound
\[
|\partial_y^\alpha \chi_y(x)|\le C_\alpha,\qquad |\alpha|\le 3.
\]
Using the cutoff function, we decompose
\[
P_\sigma(y)=\frac{1}{V_M(2\pi\sigma^2)^{\frac{D}{2}}}\big(I_1(y,\sigma)+I_2(y,\sigma)\big),
\]
where
\[
I_1(y,\sigma)
:=
\int_M \chi_y(x)\exp\big(-\frac{\|y-x\|^2}{2\sigma^2}\big)d\mu(x),
\]
\[
I_2(y,\sigma)
:=
\int_M (1-\chi_y(x))\exp\big(-\frac{\|y-x\|^2}{2\sigma^2}\big)d\mu(x).
\]
By construction, the support of $\chi_y$ is contained in $\exp_{x_0}(B_d(0,r_0))$, so $I_1$ can be written in exponential coordinates at $x_0$.

%I_1
Using the expansions in exponential coordinates and extending the integrand by zero outside $B_d(0,r_0)$, we obtain
\[
I_1(y,\sigma)
=
\int_{\mathbb R^d}
\chi_y(x_0(u))
\exp\big(-\frac{1}{2\sigma^2}\big(\|v_y\|^2 + u^\top A_y u + R(y,u)\big)\big)
\big(1+Q_g(y,u)\big)du.
\]
Make the change of variables $z = u/\sigma$, and we write $\eta_\sigma(z) = \rho(\sigma z)$. Then
\[
I_1(y,\sigma)
=
e^{-\|v_y\|^2/(2\sigma^2)}\sigma^d
\int_{\mathbb R^d}
\eta_\sigma(z)
e^{-\frac12 z^\top A_y z}
e^{-R(y,\sigma z)/(2\sigma^2)}
\big(1+Q_g(y,\sigma z)\big)dz.
\]
We decompose $I_1$ as
\[
I_1(y,\sigma)
=
e^{-\|v_y\|^2/(2\sigma^2)}\sigma^d
\big(E_0(y)+E_1(y,\sigma)+E_2(y,\sigma)\big),
\]
where
\[
E_0(y):=\int_{\mathbb R^d} e^{-\frac12 z^\top A_y z}dz
=\frac{(2\pi)^{d/2}}{\sqrt{\det A_y}},
\]
\[
E_1(y,\sigma)
:=
\int_{\mathbb R^d}
\eta_\sigma(z)e^{-\frac12 z^\top A_y z}
\big(
e^{-R(y,\sigma z)/(2\sigma^2)}\big(1+Q_g(y,\sigma z)\big)-1
\big)dz,
\]
\[
E_2(y,\sigma)
:=
\int_{\mathbb R^d}
(\eta_\sigma(z)-1)e^{-\frac12 z^\top A_y z}dz.
\]
We first estimate $E_2$. 
Since $A_y\ge c_\varepsilon I_{T_{\pi(y)}M}$ and $\eta_\sigma(z)-1\neq 0$ implies $\|z\|\ge r/(2\sigma)$, we have
\[
|E_2(y,\sigma)|
\le
\int_{\|z\|\ge r/(2\sigma)} e^{-(c_\varepsilon/2)\|z\|^2}dz
\le C e^{-c/\sigma^2}.
\]
To estimate derivatives, let $\phi(y,z):=\frac12 z^\top A_y z$. Since $\partial_y^\alpha A_y$ is uniformly bounded for $|\alpha|\le 3$,
\[
|\partial_y^\alpha \phi(y,z)|\le C_\alpha \|z\|^2, \qquad |\alpha|\le 3.
\]
By repeated application of the chain rule to $e^{-\frac12 z^\top A_y z}$,
\[
\Big|
\partial_y^\alpha e^{-\frac12 z^\top A_y z}
\Big|
\le
C_\alpha (1+\|z\|^{m_\alpha})e^{-(c_\varepsilon/2)\|z\|^2}.
\]
Therefore by $|\eta_\sigma-1|\le1$ and the dominated convergence,
\[
|\partial_y^\alpha E_2(y,\sigma)|
\le
C_\alpha\int_{\|z\|\ge r/(2\sigma)} (1+\|z\|^{m_\alpha})e^{-(c_\varepsilon/2)\|z\|^2}
\le
C_\alpha e^{-c/\sigma^2},
\quad |\alpha|\le 3.
\]
We next estimate $E_1$. Set
\[
a(y,z,\sigma):=\frac{R(y,\sigma z)}{2\sigma^2},
\]
then by the bounds of $R(y, u)$ we have 
\[
|a(y,z,\sigma)|\le C(\sigma\|v_y\|\|z\|^3+\sigma^2\|z\|^4),
\qquad
|\partial_y^\alpha a(y,z,\sigma)|\le C_\alpha\sigma\|z\|^3,
\]
and
\[
|a(y,z,\sigma)| \le \frac{c_\varepsilon}{8}\|z\|^2.
\]
We write
\[
G(y,z,\sigma):=
e^{-\phi(y,z)}
\big(
e^{-a(y,z,\sigma)}\big(1+Q_g(y,\sigma z)\big)-1
\big).
\]
First, by 
\[
|e^{-a}(1+Q_g)-1| = |(e^{-a}-1)(1+Q_g)+Q_g|\le|a|e^{|a|}|1+Q_g|+|Q_g|,
\]
we have 
\[
|e^{-a(y,z,\sigma)}(1+Q_g(y,\sigma z))-1| \le C(\sigma\|v_y\|+\sigma^2)(1+\|z\|^m)e^{(c_\varepsilon/8)\|z\|^2}.
\]
Combining the bound of $\phi(y,z)$, we get
\[
|G(y,z,\sigma)|
\le
C(\sigma\|v_y\|+\sigma^2)(1+\|z\|^m)e^{-(3c_\varepsilon/8)\|z\|^2}.
\]
Now consider the derivatives of $G(y,z,\sigma)$. By Leibniz’ rule, every term arising in $\partial^\alpha_y G$ contains either $e^{-a}(1+Q_g)-1$, a derivative of $a(y,z,\sigma)$ or a derivative of $Q_g(y,\sigma z)$; by the bounds above, such terms are of order at least $\sigma$. Therefore, 
\[
|\partial_y^\alpha G(y,z,\sigma)|
\le
C_\alpha \sigma (1+\|z\|^{m_\alpha})e^{-(3c_\varepsilon/8)\|z\|^2}.
\]
Since the right-hand sides are integrable on $\mathbb R^d$ and $\eta_\sigma(z)\le 1$, by the dominated convergence it follows that
\[
|E_1(y,\sigma)|\le C(\sigma\|v_y\|+\sigma^2),
\qquad
|\partial_y^\alpha E_1(y,\sigma)|\le C_\alpha \sigma,\quad |\alpha|\le 3.
\]
Combining the bounds for $E_1$ and $E_2$, we conclude that
\[
I_1(y,\sigma)
=
e^{-\|v_y\|^2/(2\sigma^2)}\sigma^d
\big(
\frac{(2\pi)^{d/2}}{\sqrt{\det A_y}}+E(y,\sigma)
\big),
\]
where
\[
|E(y,\sigma)|\le C(\sigma\|v_y\|+\sigma^2),
\qquad
|\partial_y^\alpha E(y,\sigma)|\le C_\alpha \sigma,\quad |\alpha|\le 3.
\]

%I_2
We finally estimate the complement term
\[
I_2(y,\sigma)
=
\int_M (1-\chi_y(x))
\exp\big(-\frac{\|y-x\|^2}{2\sigma^2}\big)d\mu(x).
\]
By construction of the cutoff, $(1-\chi_y(x))\neq 0$ implies $d_M(x,\pi(y))\ge r/2$.
Consider the compact set
\[
K:=\{(y,x): y\in T(\tau-\varepsilon),\ d_M(x,\pi(y))\ge r/2\}.
\]
Since $\pi(y)$ is the unique nearest point on $M$ to $y$, the continuous function
\[
f(y,x):=\|y-x\|^2-\|v_y\|^2
\]
is strictly positive on $K$. Hence, by compactness, there exists $\delta_{\varepsilon,r}>0$ such that
\[
\|y-x\|^2\ge \|v_y\|^2+\delta_{\varepsilon,r}
\]
whenever $(1-\chi_y(x))\neq 0$.
Therefore
\[
|I_2(y,\sigma)|
\le
\int_M (1-\chi_y(x))
\exp\big(-\frac{\|y-x\|^2}{2\sigma^2}\big)d\mu(x)
\le
V_M
\exp\big(-\frac{\|v_y\|^2+\delta_{\varepsilon,r}}{2\sigma^2}\big).
\]
To estimate derivatives, note that
\[
\Big|\partial_y^\alpha\Big(
(1-\chi_y(x))
\exp\big(-\frac{\|y-x\|^2}{2\sigma^2}\big)
\Big)\Big|
\le
C_\alpha \sigma^{-m_\alpha}
\exp\big(-\frac{\|v_y\|^2+\delta_{\varepsilon,r}}{2\sigma^2}\big).
\]
Since the above bound is integrable, we may differentiate under the integral sign. Then by Leibniz' rule and $\|y-x\|^2\ge \|v_y\|^2+\delta_{\varepsilon,r}$, 
\[
|\partial_y^\alpha I_2(y,\sigma)|
\le
C_\alpha \sigma^{-m_\alpha}
\exp\big(-\frac{\|v_y\|^2+\delta_{\varepsilon,r}}{2\sigma^2}\big).
\]
Thus $I_2$ and all its derivatives up to order three are exponentially negligible compared with the
main expansion, and may be absorbed into the remainder term.

%conclusion
Finally, putting $I_1$ and $I_2$ together. Since $A_y$ is uniformly positive definite, the factor
$(\det A_y)^{-1/2}$ is bounded above and below. Hence the error coming from $E(y,\sigma)$ and $I_2(y,\sigma)$ may be normalized by this principal term. Hence, we obtain
\[
P_\sigma(y)
=
\frac{1}{V_M(2\pi\sigma^2)^{\frac{D-d}{2}}}
e^{-\|v_y\|^2/(2\sigma^2)}
\frac{1}{\sqrt{\det A_y}}
\big(
1 + R_P(y,\sigma)
\big),
\]
where 
\[
|R_P(y,\sigma)|\le C(\sigma\|v_y\|+\sigma^2),
\qquad
|\partial_y^\alpha R_P(y,\sigma)|\le C_\alpha \sigma,\quad |\alpha|\le 3.
\]
This refined factorization also provides the derivative bounds needed in Corollary~\ref{co:log-density}.
\end{proof}

\subsubsection{Proof of Corollary~\ref{co:log-density}}\label{sec:pf-log density}
\begin{proof}
Following the proof of Theorem~\ref{th:probability-noisy}, taking logarithms in the expansion of $P_\sigma(y)$ yields
\[
\log P_\sigma(y)
=
\log \big(V_M(2\pi\sigma^2)^{\frac{D-d}{2}}\big)^{-1}
-\frac{\|v_y\|^2}{2\sigma^2}
-\frac12\log\det A_y
+\log(1+R_P(y,\sigma)),
\]
where 
\[
|R_P(y,\sigma)|\le C(\sigma\|v_y\|+\sigma^2),
\qquad
|\partial_y^\alpha R_P(y,\sigma)|\le C_\alpha \sigma,\quad |\alpha|\le 3.
\]
Set
\[
R(y,\sigma):=\log(1+R_P(y,\sigma)).
\]
Since $1+R_P$ stays away from zero after decreasing $\sigma_0$ if necessary, the same bound carries over to $R(y,\sigma)$ with
\[
R(y,\sigma)=O(\sigma\|v_y\|+\sigma^2).
\]
Furthermore, by repeated differentiation of $\log(1+R_P)$, together with the above bounds on $\partial_y^\alpha R_P$, it follows that
\[
|\partial_y^\alpha R(y,\sigma)| \le C_{\alpha}\sigma,
\qquad |\alpha|\le3,
\]
which satisfies the stated bounds.
\end{proof}

\subsubsection{Proof of Theorem~\ref{th:gradient-noisy}}\label{sec:pf-gradient noisy} 
\begin{proof}
Corollary~\ref{co:log-density} gives the expansion
\[
\log P_\sigma(y)
= \log \big(V_M(2\pi\sigma^2)^{\frac{D-d}{2}}\big)^{-1} - \frac{\|v_y\|^2}{2\sigma^2}
  - \frac{1}{2}\log\det A_y + R(y,\sigma),
\]
where the remainder $R$ satisfies,
\[
\|\nabla_y^m R(y,\sigma)\|_{\mathrm{op}}
\le C_{m}\sigma, \qquad m\le 3.
\]
Taking the ambient gradient yields
\begin{equation}\label{eq:gradient-decomposition}
\mathcal G_\sigma(y) = \nabla \log P_\sigma(y)
= - \frac{1}{2\sigma^2}\nabla\|v_y\|^2
  - \frac{1}{2}\nabla \log\det A_y + \nabla R(y,\sigma).
\end{equation}
%\|v_y\|^2
Using Lemma~\ref{lem:derivative-of-v-pi}, for any $\omega \in \mathbb{R}^D$ we have
\[
\nabla_\omega v_y = \omega - A_y^{-1}\omega^\top.
\]
Since $v_y \in T_{\pi(y)}^\perp M$ and $A_y^{-1}\omega^\top \in T_{\pi(y)}M$, we obtain
\[
D_\omega \|v_y\|^2
= 2\langle v_y, \nabla_\omega v_y \rangle
= 2\langle v_y, \omega \rangle.
\]
By the definition of the gradient, this implies
\[
\nabla \|v_y\|^2 = 2 v_y.
\]
Consequently, the first term in \eqref{eq:gradient-decomposition} is exactly
\[
- \frac{1}{2\sigma^2}\nabla\|v_y\|^2 = - \frac{v_y}{\sigma^2}.
\]
%log det A_y
Define
\[
F(y) := -\frac{1}{2}\log\det A_y, \qquad y \in T(\tau-\varepsilon).
\]
For $x\in M$ we have $A_x=I_{T_xM}$, hence $F|_M\equiv 0$. Therefore all tangential derivatives of $F$ vanish on $M$. Then it suffices to compute the normal derivatives. Let $u \in T_x^\perp M$ with $\|u\| = 1$ and consider the curve
\[
\gamma(t) := x + t u, \qquad |t| \ll 1.
\]
For $|t|$ small, $\gamma(t) \in T(\tau-\varepsilon)$, $\pi(\gamma(t)) = x$ and $v_{\gamma(t)} = t u$. Let $S_u$ be the shape operator defined at $x$ associated with $u$. By linearity of the shape operator in the normal argument, we have
\[
A_{\gamma(t)} = I_{T_xM} - \langle t u, \Pi_x \rangle
= I_{T_xM} - t S_u,
\]
Therefore
\[
F(\gamma(t)) = -\frac{1}{2}\log\det\big(I_{T_xM} - t S_u\big).
\]
Differentiating at $t=0$ gives
\[
\frac{d}{dt} F(\gamma(t))\Big|_{t=0}
= -\frac{1}{2}\operatorname{Tr}(-S_u)
= \frac{1}{2}\operatorname{Tr}(S_u).
\]
For mean curvature $H_x = \frac{1}{d}\operatorname{Tr}(\Pi_x)$, we have $\operatorname{Tr}(S_u) = d \langle H_x, u\rangle$. Hence
\[
\frac{d}{dt} F(\gamma(t))\Big|_{t=0}
= \frac{d}{2} \langle H_x, u\rangle,
\]
which shows that
\[
\nabla F(x) = \frac{d}{2} H_x \in T_x^\perp M.
\]
Now let $y \in T(\tau-\varepsilon)$. By the regularity of $A_y$, $F$ is $C^3$ on $T(\tau-\varepsilon)$ and $\nabla F$ is Lipschitz on $T(\tau-\varepsilon)$. Therefore
\[
\|\nabla F(y) - \nabla F(\pi(y))\| \le C\|y-\pi(y)\| =  C \|v_y\|.
\]
Then
\[
-\frac{1}{2}\nabla \log\det A_y
= \nabla F(y)
= \frac{d}{2} H_{\pi(y)} + O(\|v_y\|).
\]
%remainder
For $\nabla R$, by Corollary~\ref{co:log-density} with $m=1$,
\[
\|\nabla R(y,\sigma)\|
\le C_{1}\sigma.
\]
Substituting the above gradients into \eqref{eq:gradient-decomposition}, we obtain
\[
\mathcal G_\sigma(y)
= - \frac{v_y}{\sigma^2}
  + \frac{d}{2}H_{\pi(y)}
  + O(\|v_y\| + \sigma).
\]
\end{proof}

\subsubsection{Proof of Theorem~\ref{th:noisy-hessian}}\label{sec:pf-noisy hessian}
\begin{proof}
By Corollary~\ref{co:log-density},
\[
\log P_\sigma(y)
= \log \big(V_M(2\pi\sigma^2)^{\frac{D-d}{2}}\big)^{-1} - \frac{\|v_y\|^2}{2\sigma^2}
  - \frac{1}{2}\log\det A_y + R(y,\sigma).
\]
Taking two ambient derivatives in $y$ we obtain
\[
\mathcal{H}_\sigma(y)
:= \nabla^2 \log P_\sigma(y)
= - \frac{1}{2\sigma^2} \nabla^2 \|v_y\|^2
  - \frac{1}{2}\nabla^2 \log\det A_y
  + \nabla^2 R(y,\sigma).
\]
We treat the three terms separately.
% \|v_y\|^2
As in the proof of Theorem~\ref{th:gradient-noisy}, we have
\[
\nabla \|v_y\|^2 = 2 v_y,
\]
For any $\omega \in \mathbb{R}^D$, Lemma~\ref{lem:derivative-of-v-pi} yields
\[
\nabla_\omega v_y = \omega - A_y^{-1}\omega^\top.
\]
Then, the Jacobian of $v_y$ is
\[
\nabla v_y = I_D - A_y^{-1} P_T(y),
\]
Therefore
\[
- \frac{1}{2\sigma^2}\nabla^2 \|v_y\|^2 = -\frac{1}{\sigma^2} \big( I_D- A_y^{-1}P_T(y) \big) = -\frac{1}{\sigma^2}P_N(y) - \frac{1}{\sigma^2}(I_{T_{\pi(y)}M}-A_y^{-1})P_T(y)
\]
%log det A_y
Let $F(y) := -\frac{1}{2}\log\det A_y$. Since $F(y)$ is $C^3$ on $T(\tau-\varepsilon)$, its Hessian is bounded:
\[
\|\nabla^2 F(y)\|_{\mathrm{op}}=O(1).
\]
For $\nabla^2 R$, by Corollary~\ref{co:log-density} with $m=2$,
\[
\|\nabla^2 R(y,\sigma)\|_{\mathrm{op}}
\le C_{\varepsilon,2} \sigma.
\]
Combining the above three terms, we can write
\[
\mathcal{H}_\sigma(y)
= - \frac{1}{\sigma^2}P_N(y)
  - \frac{1}{\sigma^2}(I_{T_{\pi(y)}M} - A_y^{-1})P_T(y)
  + O(1).
\]
\end{proof}

\subsubsection{Proof of Corollary~\ref{co:noisy-hessian-gap}}\label{sec:pf-noisy hessian gap}
\begin{proof}
By Theorem~\ref{th:noisy-hessian}, we may write
\[
\mathcal H_\sigma(y)= \mathcal H_0(y)+E(y,\sigma),
\]
where
\[
\mathcal H_0(y):=
-\frac{1}{\sigma^2}P_N(y)
-\frac{1}{\sigma^2}(I_{T_{\pi(y)}M}-A_y^{-1})P_T(y)
\]
and 
\[
\|E(y,\sigma)\|_{\mathrm{op}}\le C.
\]
Then, $\mathcal H_0(y)$ is block diagonal:
\[
\mathcal H_0(y)\big|_{T_{\pi(y)}^\perp M}=-\sigma^{-2}I,
\qquad
\mathcal H_0(y)\big|_{T_{\pi(y)}M}=-\sigma^{-2}(I_{T_{\pi(y)}M}-A_y^{-1}).
\]
Hence the $D-d$ normal eigenvalues of $\mathcal H_0(y)$ are all equal to $-\sigma^{-2}$, while its $d$ tangential eigenvalues are those of $-\sigma^{-2}(I_{T_{\pi(y)}M}-A_y^{-1})$.
Now recall that
\[
A_y=I_{T_{\pi(y)}M}-\langle v_y,\Pi_{\pi(y)}\rangle.
\]
By $y\in T(\tau-\varepsilon)$ and the reach bound,
\[
\|\langle v_y,\Pi_{\pi(y)}\rangle\|_{\mathrm{op}}
\le \frac{\|v_y\|}{\tau}
\le \frac{\tau-\varepsilon}{\tau} = 1-\frac{\varepsilon}{\tau}
<1.
\]
Therefore $A_y$ is uniformly invertible on $T(\tau-\varepsilon)$. Moreover, every eigenvalue $\lambda$ of $A_y$ satisfies
\[
\lambda\le 1+\|\langle v_y,\Pi_{\pi(y)}\rangle\|_{\mathrm{op}}
\le 2-\frac{\varepsilon}{\tau},
\]
so every eigenvalue of $A_y^{-1}$ is bounded below by
\[
\lambda_{\min}(A_y^{-1})\ge \frac{1}{2-\varepsilon/\tau}.
\]
If $\mu$ is a tangential eigenvalue of $\mathcal H_0(y)$, then
\[
\mu=-\sigma^{-2}(1-\lambda^{-1}),
\]
and thus
\[
\big|\mu+\sigma^{-2}\big|
=\sigma^{-2}\lambda^{-1}
\ge \frac{1}{2-\varepsilon/\tau}\sigma^{-2}.
\]
It follows that the tangential and normal spectral clusters of $\mathcal H_0(y)$ are separated by at least
\[
c_{\varepsilon,0}\sigma^{-2},
\qquad
c_{\varepsilon,0}:=\frac{1}{2-\varepsilon/\tau}.
\]
In particular, every tangential eigenvalue is strictly larger than every normal eigenvalue, so the tangential cluster coincides with the $d$ largest eigenvalues of $\mathcal H_0(y)$.

Finally, each eigenvalue of $\mathcal H_\sigma(y)$ differs from the corresponding eigenvalue of $\mathcal H_0(y)$ by at most $\|E(y, \sigma)\|_{\mathrm{op}}\le C$. Hence the gap between the $d$ largest eigenvalues of $\mathcal H_\sigma(y)$ and the remaining $D-d$ eigenvalues is bounded below by
\[
c_{\varepsilon,0}\sigma^{-2}-2C.
\]
After decreasing $\sigma_0$ if necessary, this lower bound is at least
\[
c_\varepsilon\sigma^{-2}
\]
for some constant $c_\varepsilon>0$ depending only on $\varepsilon$ and $M$. This proves the claim.
\end{proof}

\subsection{Proofs for Section~\ref{sec:compute-geometry}}\label{sec:pf-geometry}
We next prove the results in Section~\ref{sec:compute-geometry}.
\subsubsection{Proof of Theorem~\ref{th:tangent-space}}\label{sec:pf-tangent-space}
\begin{proof}
Write $x:=\pi(y)$. Recall that
\[
\mathcal H_0(y):=
-\frac1{\sigma^2}P_N(y)
-\frac1{\sigma^2}(I_{T_xM}-A_y^{-1})P_T(y),
\]
where
\[
\mathcal H_\sigma(y)=\mathcal H_0(y)+R_\sigma(y),
\qquad
\|R_\sigma(y)\|_{\mathrm{op}}\le C.
\]

By Corollary~\ref{co:noisy-hessian-gap}, both $\mathcal H_0(y)$ and $\mathcal H_\sigma(y)$ have a spectral gap of size at least $c_\varepsilon \sigma^{-2}$ between the largest $d$ eigenvalues and the remaining $D-d$ eigenvalues.
The operator $\mathcal H_0(y)$ is block diagonal with respect to $\mathbb R^D=T_xM\oplus T_x^\perp M$. Its spectral projector associated with the $d$ largest eigenvalues is therefore exactly $P_T(y)$.
Thus, applying the Davis--Kahan theorem gives
\[
\sin\{\Theta(\widehat T_yM, T_xM)\} 
\le
C\frac{\|R_\sigma(y)\|_{\mathrm{op}}}{c_\varepsilon \sigma^{-2}}
\le C' \sigma^2.
\]

Next consider the consistency of $\widehat d$. Let $\eta_1\ge\cdots\ge \eta_d>\eta_{d+1}=\cdots = \eta_D$ be the eigenvalues of $\mathcal H_0(y)$, and $\lambda_1\ge\cdots\ge \lambda_d>\lambda_{d+1}=\cdots = \lambda_D$ be the eigenvalues of $\mathcal H_\sigma(y)$.
Since $A_y=I_{T_xM}-\langle v_y,\Pi_{x}\rangle$ and $\|\Pi_{x}\|_{\mathrm{op}}\le \tau^{-1}$, we have $\|I_{T_xM} - A_y^{-1}\|_{\mathrm{op}}=O(\|v_y\|)$ uniformly on $T(\tau-\varepsilon)$. Thus
\[
\sup_{1\le i\le d}|\eta_i|
\le C\frac{\|v_y\|}{\sigma^2}.
\]
Therefore,
\[
\sup_{k\neq d} |\eta_k-\eta_{k+1}|
\le C\frac{\|v_y\|}{\sigma^2},
\qquad
|\eta_d-\eta_{d+1}|
\ge c_\varepsilon \sigma^{-2}.
\]
By Weyl's inequality,
\[
|\lambda_k-\eta_k|\le C,
\qquad 1\le k\le D,
\]
so
\[
\sup_{k\neq d} |\lambda_k-\lambda_{k+1}|
\le C_1\frac{\|v_y\|}{\sigma^2}+C_2,
\]
whereas
\[
|\lambda_d-\lambda_{d+1}|
\ge c_\varepsilon \sigma^{-2}-C_3.
\]
Hence, along any sequence such that $\sigma\to0$ and $\|v_y\|\to0$, the gap at $k=d$ is eventually strictly larger than all other consecutive gaps, i.e.,
\[
\widehat d
= \arg\max_{k}|\lambda_k-\lambda_{k+1}| = d.
\]
\end{proof}

\subsubsection{Proof of Theorem~\ref{th:umbilical-submanifold}}\label{sec:pf-umbilical submanifold}
\begin{proof}
For $x\in M$, the estimator in the theorem is
\[
\widehat\Pi_x(u,v)
=
\frac{2}{d}g(u,v)\mathcal G_\sigma(x),
\]
while the true second fundamental form is
\[
\Pi_x(u,v) = g(u,v)H_x,
\]
because $M$ is totally umbilical. By Theorem~\ref{th:gradient-noisy}, for points $x\in M$ we have
\[
\mathcal G_\sigma(x)
=
\frac{d}{2}H_x + R_\sigma(x),
\qquad
\|R_\sigma(x)\| \le C\sigma.
\]
Therefore
\[
\widehat\Pi_x(u,v)-\Pi_x(u,v)
=
g(u,v)\big(\frac{2}{d}\mathcal G_\sigma(x)-H_x\big)
=
g(u,v)\frac{2}{d}R_\sigma(x).
\]
Taking the operator norm over the unit tangent vectors yields
\[
\|\widehat\Pi_x-\Pi_x\|_{\mathrm{op}} 
\le
\frac{2}{d}\|R_\sigma(x)\|
\le C\sigma.
\]
\end{proof}

\subsubsection{Proof of Theorem~\ref{th:hypersurface}}\label{sec:pf-hypersurface}
\begin{proof}
Write $x:=\pi(y)$ and $r:=\|v_y\|$. We define two vector fields on $U_\sigma$ by
\[
\mathcal N(y):=P_N(y)\mathcal G_\sigma(y),
\qquad
\mathcal T(y):=P_T(y)\mathcal G_\sigma(y),
\]
so that
\[
\mathcal G_\sigma(y)=\mathcal N(y)+\mathcal T(y).
\]
Recall from the proof of Theorem~\ref{th:gradient-noisy} that \[
\mathcal G_\sigma(y)
=
-\frac{v_y}{\sigma^2}+\nabla F(y)+\nabla R(y,\sigma),
\qquad
F(y):=-\frac12\log\det A_y,
\]
where 
\[
\nabla F(y)=\frac d2 H_{\pi(y)}+O(\|v_y\|), \quad \|\nabla R(y,\sigma)\|=O(\sigma).
\]
Since $v_y, H_{\pi(y)}\in T_{\pi(y)}^\perp M$, we know
\begin{equation}\label{eq:T-size}
\|\mathcal T(y)\|=O(r+\sigma),
\end{equation}
and there exists $C>0$ such that
\begin{equation}\label{eq:N-size}
\|\mathcal N(y)\|\ge \frac{r}{\sigma^2} -C \qquad
\text{for all }y\in U_\sigma,\ 0<\sigma\le\sigma_0.
\end{equation}
We show that $\|\mathcal N(y)\|$ is lower bounded on $U_\sigma$. Let $C_0 \ge C + \frac{c_0}{2}$.
Since $\|\mathcal G_\sigma(y)\|\ge c_0$ uniformly on $U_\sigma$, when $r\le C_0\sigma^2$ we have $\|\mathcal N(y)\| \ge \|\mathcal G_\sigma(y)\| - \|\mathcal T(y)\| \ge c_0/2$ for sufficiently small $\sigma_0$; For $r>C_0\sigma^2$, $\|\mathcal N(y)\|\ge c_0/2$ by \eqref{eq:N-size}. Therefore we have $\|\mathcal N(y)\|\ge c_0/2$ uniformly on $U_\sigma$. 

For any $C^1$ vector field $V$ on $U_\sigma$ with $V(y)\neq 0$, define
\[
Q_y(V)(u,v)
:=
-\frac{\langle \nabla_{A_yu}V(y),v\rangle}{\|V(y)\|^2}V(y).
\]
Since $M$ is a hypersurface and $\mathcal N(y)\in T_x^\perp M\setminus\{0\}$, Lemma~\ref{lem:extended-normal-vector-field} yields
\begin{equation}\label{eq:Pi-as-QN}
\Pi_x=Q_y(\mathcal N).
\end{equation}
We will prove
\begin{equation}\label{eq:hypersurface-split}
\|\widehat\Pi_y-Q_y(\mathcal G_\sigma)\|_{\mathrm{op}}=O(r+\sigma),
\qquad
\|Q_y(\mathcal G_\sigma)-Q_y(\mathcal N)\|_{\mathrm{op}}=O(r+\sigma).
\end{equation}
Together with \eqref{eq:Pi-as-QN}, this implies the theorem.

We first compare $Q_y(\mathcal G_\sigma)$ and $Q_y(\mathcal N)$. Consider the derivative of $\mathcal T$ along tangent directions. Write
\[
W(y):=P_T(y)\nabla F(y),
\qquad
S(y):=P_T(y)\nabla R(y,\sigma),
\]
so that $\mathcal T=W+S$.
We first treat $W$. For every $x'\in M$, $\nabla F(x')=\frac d2 H_{x'}\in T_{x'}^\perp M$, hence
\[
W(x')=P_T(x')\nabla F(x')=0,
\qquad x'\in M.
\]
Thus $W|_M\equiv 0$, which implies
\[
\langle \nabla_u W(x),v\rangle=0.
\]
Using
\[
\langle \nabla_{A_yu}W(y),v\rangle
=
\langle (\nabla W(y)-\nabla W(x))A_yu,v\rangle
+
\langle \nabla W(x)(A_yu-u),v\rangle
+
\langle \nabla_uW(x),v\rangle,
\]
then the last term vanishes. Since $W$ is $C^2$ on the compact set under consideration, $\nabla W$ is Lipschitz, and thus
\[
\|\nabla W(y)-\nabla W(x)\|_{\mathrm{op}}=O(r),
\]
and $A_y$ is uniformly bounded on $T_xM$, the first term is $O(r)$.
Moreover, since $A_yu-u=-(\nabla_u v_y)^\top$ and $\Pi$ is uniformly bounded on $M$,
\[
\|A_yu-u\|=O(r)\|u\|.
\]
As $\nabla W$ is uniformly bounded on $U_\sigma$, the second term is also
$O(r)$. Hence
\begin{equation}\label{eq:W-derivative}
\big|\langle \nabla_{A_yu}W(y),v\rangle\big|
=O(r).
\end{equation}
Since $S=P_T\nabla R$, the product rule gives
\[
\nabla S=(\nabla P_T)\nabla R + P_T\nabla^2R.
\]
Because $P_T$ and $\nabla P_T$ are uniformly bounded on $U_\sigma$, while by Corollary~\ref{co:log-density} $\|\nabla R(y,\sigma)\|=O(\sigma)$ and $\|\nabla^2R(y,\sigma)\|_{\mathrm{op}} = O(\sigma)$, we have  
\begin{equation}\label{eq:S-derivative}
\big|\langle \nabla_{A_yu}S(y),v\rangle\big|
=O(\sigma).
\end{equation}
Combining \eqref{eq:W-derivative} and \eqref{eq:S-derivative}, we obtain
\begin{equation}\label{eq:T-derivative}
\big|\langle \nabla_{A_yu}\mathcal T(y),v\rangle\big|
=O(r+\sigma).
\end{equation}
Set
\[
\beta_{\mathcal N}(u,v):=\langle \nabla_{A_yu}\mathcal N(y),v\rangle,
\qquad
\beta_{\mathcal T}(u,v):=\langle \nabla_{A_yu}\mathcal T(y),v\rangle.
\]
Then
\[
Q_y(\mathcal G_\sigma)(u,v)-Q_y(\mathcal N)(u,v)
=
-\frac{\beta_{\mathcal T}(u,v)}{\|\mathcal G_\sigma(y)\|^2}\mathcal G_\sigma(y)
-\beta_{\mathcal N}(u,v)
\big(
\frac{\mathcal G_\sigma(y)}{\|\mathcal G_\sigma(y)\|^2}
-
\frac{\mathcal N(y)}{\|\mathcal N(y)\|^2}
\big).
\]
By \eqref{eq:T-derivative} and the assumption
$\|\mathcal G_\sigma(y)\|\ge c_0$ on $U_\sigma$,
\[
\Big\|
\frac{\beta_{\mathcal T}(u,v)}{\|\mathcal G_\sigma(y)\|^2}\mathcal G_\sigma(y)
\Big\|
\le
\frac{|\beta_{\mathcal T}(u,v)|}{\|\mathcal G_\sigma(y)\|}
=
O(r+\sigma).
\]
For the second term, note that \eqref{eq:Pi-as-QN} implies
\[
Q_y(\mathcal N)(u,v)
=
-\frac{\beta_{\mathcal N}(u,v)}{\|\mathcal N(y)\|^2}\mathcal N(y)
=
\Pi_x(u,v),
\]
hence
\[
\frac{|\beta_{\mathcal N}(u,v)|}{\|\mathcal N(y)\|}
\le
\|\Pi_x\|_{\mathrm{op}}.
\]
Since $\|\mathcal{G_\sigma}\|\ge c_0$ and $\|\mathcal N(y)\|\ge c_0/2$ uniformly on $U_\sigma$, the map
\[
a\longmapsto \frac{a}{\|a\|^2}
\]
is uniformly Lipschitz on the set
$\{a\in\mathbb R^D:\|a\|\ge c_0/2\}$. Thus
\[
\Big\|
\frac{\mathcal G_\sigma(y)}{\|\mathcal G_\sigma(y)\|^2}
-
\frac{\mathcal N(y)}{\|\mathcal N(y)\|^2}
\Big\|
=
O(\|\mathcal T(y)\|)
=
O(r+\sigma).
\]
Hence the second term is also $O(r+\sigma)$. We conclude that
\begin{equation}\label{eq:QG-QN}
\|Q_y(\mathcal G_\sigma)-Q_y(\mathcal N)\|_{\mathrm{op}}
=
O(r+\sigma).
\end{equation}
Next, we compare $\widehat\Pi_y$ with $Q_y(\mathcal G_\sigma)$. We have
\[
\widehat\Pi_y(u,v) - Q_y(\mathcal G_\sigma)(u,v)
=
-\frac{\langle \mathcal H_\sigma(y)(u-A_yu),v\rangle}{\|\mathcal G_\sigma(y)\|^2}
\mathcal G_\sigma(y).
\]
Hence
\begin{equation}\label{eq:hatPi-minus-tildePi}
\|\widehat\Pi_y-Q_y(\mathcal G_\sigma)\|_{\mathrm{op}}
\le
\frac{
|\langle \mathcal H_\sigma(y)(u-A_yu),v\rangle|
}{
\|\mathcal G_\sigma(y)\|
}.
\end{equation}
Let $w:=u-A_yu$. Then $w\in T_xM$ and, as above, $\|w\|=O(r)\|u\|$.
By Theorem~\ref{th:noisy-hessian}, we can write
\[
\mathcal H_\sigma(y)
=
-\frac{1}{\sigma^2}P_N(y)
+
O\big(\frac{r}{\sigma^2}\big)P_T(y)
+
O(1).
\]
Thus
\[
|\langle \mathcal H_\sigma(y)w,v\rangle|
\le
C\frac{r}{\sigma^2}\|w\|
+
C\|w\|.
\]
Using $\|w\|=O(r)\|u\|$, we obtain
\begin{equation}\label{eq:numerator-bound}
|\langle \mathcal H_\sigma(y)(u-A_yu),v\rangle|
\le
C\big(\frac{r^2}{\sigma^2}+r\big).
\end{equation}
By Theorem~\ref{th:gradient-noisy}, there exists $C_1>0$ such that
\begin{equation}\label{eq:G-lower}
\|\mathcal G_\sigma(y)\|
\ge
\frac{r}{\sigma^2}-C_1
\qquad
\text{for all }y\in U_\sigma,\ 0<\sigma\le\sigma_0.
\end{equation}
We distinguish two cases. If $r\le 2C_1\sigma^2$, then $r=O(\sigma^2)$, and by
\eqref{eq:numerator-bound} together with the assumption
$\|\mathcal G_\sigma(y)\|\ge c_0$ on $U_\sigma$,
\[
\|\widehat\Pi_y-Q_y(\mathcal G_\sigma)\|_{\mathrm{op}}
\le
C\big(\frac{r^2}{\sigma^2}+r\big)
=
O(r)
=
O(r+\sigma).
\]
If $r>2C_1\sigma^2$, then \eqref{eq:G-lower} yields
\[
\|\mathcal G_\sigma(y)\|\ge \frac12\frac{r}{\sigma^2},
\]
and therefore, by \eqref{eq:hatPi-minus-tildePi} and \eqref{eq:numerator-bound},
\[
\|\widehat\Pi_y-Q_y(\mathcal G_\sigma)\|_{\mathrm{op}}
\le
C
\frac{r^2/\sigma^2+r}{r/\sigma^2}
=
O(r+\sigma^2)
=
O(r+\sigma).
\]
In either case,
\begin{equation}\label{eq:hatPi-tildePi-final}
\|\widehat\Pi_y-Q_y(\mathcal G_\sigma)\|_{\mathrm{op}}=O(r+\sigma).
\end{equation}
Combining \eqref{eq:Pi-as-QN}, \eqref{eq:QG-QN}, and \eqref{eq:hatPi-tildePi-final} yields
\[
\|\widehat\Pi_y-\Pi_x\|_{\mathrm{op}}
\le
\|\widehat\Pi_y-Q_y(\mathcal G_\sigma)\|_{\mathrm{op}}
+
\|Q_y(\mathcal G_\sigma)-Q_y(\mathcal N)\|_{\mathrm{op}} = O(r+\sigma)
=
O(\|v_y\|+\sigma).
\] 
This proves the claim.
\end{proof}

\subsubsection{Proof of Corollary~\ref{co:level-set}}\label{sec:pf-level set} 
\begin{proof} 
Let $L_{\sigma,c}$ be a level set of $P_\sigma$ such that $L_{\sigma,c}\cap M\neq\emptyset$, and choose $x\in L_{\sigma,c}\cap M$.

We first show that $L_{\sigma,c}\subset T(\tau-\varepsilon)$ for all sufficiently small $\sigma$. By Theorem~\ref{th:probability-noisy}, uniformly for $x\in M$,
\[
P_\sigma(x)
=
\frac{1}{V_M(2\pi\sigma^2)^{\frac{D-d}{2}}}\big(1+O(\sigma^2)\big).
\]
On the other hand, if $y\notin T(\tau-\varepsilon)$, then $\|y-z\|\ge \tau-\varepsilon$ for every $z\in M$, so
\[
P_\sigma(y)
=
\frac{1}{V_M(2\pi\sigma^2)^{D/2}}
\int_M \exp\big(-\frac{\|y-z\|^2}{2\sigma^2}\big)d\mu(z)
\le
\frac{1}{(2\pi\sigma^2)^{D/2}}
\exp\big(-\frac{(\tau-\varepsilon)^2}{2\sigma^2}\big).
\]
Hence
\[
\sup_{y\notin T(\tau-\varepsilon)}\frac{P_\sigma(y)}{P_\sigma(x)}
\le
C \sigma^{-d}
\exp\big(-\frac{(\tau-\varepsilon)^2}{2\sigma^2}\big)\to 0.
\]
Therefore $P_\sigma(y)<P_\sigma(x)$ for all such $y$ when $\sigma$ is small enough, which implies every $y\in L_{\sigma,c}$ belongs to $T(\tau-\varepsilon)$ and $L_{\sigma,c}\subset T(\tau-\varepsilon)$.

Now fix $y\in L_{\sigma,c}$ and write $r:=d(y,M)=\|v_y\|$. Since $P_\sigma(y)=P_\sigma(x)$, Corollary~\ref{co:log-density} gives
\[
0=\log P_\sigma(y)-\log P_\sigma(x)
=-\frac{r^2}{2\sigma^2}-\frac12\log\det A_y+R(y,\sigma)-R(x,\sigma),
\]
where
\[
A_y=I_{T_{\pi(y)}M}-\langle v_y,\Pi_{\pi(y)}\rangle,
\qquad
|R(y,\sigma)|\le C(\sigma r+\sigma^2),
\qquad
|R(x,\sigma)|\le C\sigma^2.
\]

Moreover, by Lemma~\ref{lem:second-fundamental-form-bound},
\[
\|\langle v_y,\Pi_{\pi(y)}\rangle\|_{\mathrm{op}}
\le \|v_y\|\|\Pi_{\pi(y)}\|_{\mathrm{op}}
\le r/\tau
\le (\tau-\varepsilon)/\tau<1.
\]
Thus $A_y$ is uniformly positive definite on $T(\tau-\varepsilon)$, and since $\log\det$ is smooth on this set,
\[
|\log\det A_y|\le C\|A_y-I_{T_{\pi(y)}M}\|_{\mathrm{op}}\le Cr.
\]
Therefore
\[
\frac{r^2}{2\sigma^2}
\le C r + C\sigma r + C\sigma^2
\le C(r+\sigma^2)
\]
for all sufficiently small $\sigma$. Multiplying by $\sigma^2$ yields
\[
r^2\le C\sigma^2 r + C\sigma^4.
\]
This implies
\[
r \le C\sigma^2.
\]
Since all constants above are uniform over $y\in L_{\sigma,c}$ and over all level sets with $L_{\sigma,c}\cap M\neq\emptyset$, taking the supremum over $y\in L_{\sigma,c}$ proves the claim.
\end{proof}

\subsubsection{Proof of Theorem~\ref{th:general-manifold}}\label{sec:pf-general manifold}
\begin{proof}
Write $x:=\pi(y)$. Recall that $P_T(y)$ denotes the orthogonal projection onto $T_xM$, while $\widehat P_T(y)$ denotes the spectral projector onto the $d$ largest eigenvalues of $\mathcal{H}_\sigma(y)$. By Theorem~\ref{th:tangent-space},
\[
\|\widehat P_T(y)-P_T(y)\|_{\mathrm{op}}
\le C\sigma^2.
\] 
By Lemma~\ref{lem:second-fundamental-form-general} applied at $x$ we have,
\[
\Pi_x(u,v) = \nabla_uP_T(x)v.
\]
The estimator in the theorem is
\[
\widehat\Pi_y(u,v)
=
\nabla_u\widehat P_T(y)v.
\]
We decompose
\[
\widehat\Pi_y(u,v)-\Pi_x(u,v)
=
\big(\nabla_u\widehat P_T(y)-\nabla_uP_T(y)\big)v
+
\big(\nabla_uP_T(y)-\nabla_uP_T(x)\big)v.
\]
%Lip bound
For the second term, since $P_T(y)$ is $C^{k-1}$ on $T(\tau-\varepsilon)$, $\nabla P_T(y)$ is Lipschitz on $T(\tau-\varepsilon)$, i.e.,
\[
\|\nabla_uP_T(y)-\nabla_uP_T(x)\|_{\mathrm{op}}
\le C\|v_y\|.
\]
%asymptotic error
Next we consider the first term. Let
\[
P_N(y):=I_D-P_T(y),
\qquad
\mathcal H_0(y):= -\sigma^{-2}P_N(y)-\sigma^{-2}(I_{T_xM}-A_y^{-1})P_T(y).
\]
Then $P_T(y)$ is exactly the spectral projector of $\mathcal H_0(y)$ associated with the largest $d$ eigenvalues.

Let
\[
\widetilde {\mathcal H}_0(y):=\sigma^2 \mathcal H_0(y),
\qquad
\widetilde {\mathcal H}_\sigma(y):=\sigma^2 \mathcal H_\sigma(y).
\]
By the uniform boundedness of $A_y^{-1}$ on $T(\tau-\varepsilon)$ and the expansion of $\mathcal H_\sigma$ in Theorem~\ref{th:noisy-hessian}, the spectra of $\widetilde {\mathcal H}_0(y)$ and $\widetilde {\mathcal H}_\sigma(y)$ remain in a compact interval independent of $\sigma$. Moreover, by Corollary~\ref{co:noisy-hessian-gap}, their tangential and normal spectral clusters are separated by a uniform positive gap.
Hence one may choose a fixed positively oriented contour $\widetilde\Gamma$ that encloses the tangential cluster and excludes the normal cluster for both $\widetilde {\mathcal H}_0(y)$ and $\widetilde {\mathcal H}_\sigma(y)$.
Setting
\[
\Gamma:=\sigma^{-2}\widetilde\Gamma,
\]
we obtain
\[
\operatorname{length}(\Gamma)\le C_1\sigma^{-2},
\qquad
\operatorname{dist}\big(\Gamma,\operatorname{spec}(\mathcal H_0(y))\big)\ge C_2\sigma^{-2},
\qquad
\operatorname{dist}\big(\Gamma,\operatorname{spec}(\mathcal H_\sigma(y))\big)\ge C_3\sigma^{-2}.
\]
Therefore
\[
\sup_{z\in\Gamma}\|(\mathcal H_0(y)-zI)^{-1}\|_{\mathrm{op}}
+
\sup_{z\in\Gamma}\|(\mathcal H_\sigma(y)-zI)^{-1}\|_{\mathrm{op}}
\le C\sigma^2.
\]
For $\mathcal H$ with spectrum separated by $\Gamma$, define the corresponding Riesz projector by
\[
P[\mathcal H]=-\frac{1}{2\pi i}\int_{\Gamma}(\mathcal H-zI)^{-1}dz.
\]
Its derivative in the direction $K$ is
\[
DP[\mathcal H](K)=\frac{1}{2\pi i}\int_{\Gamma}(\mathcal H-zI)^{-1}K(\mathcal H-zI)^{-1}dz.
\]
Therefore,
\[
\nabla_u\widehat P_T(y)
=
DP[\mathcal H_\sigma(y)]\big(\nabla_u\mathcal H_\sigma(y)\big),
\qquad
\nabla_uP_T(y)
=
DP[\mathcal H_0(y)]\big(\nabla_u\mathcal H_0(y)\big).
\]
Set
\[
\Delta \mathcal H:=\mathcal H_\sigma(y)-\mathcal H_0(y),
\qquad
\Delta K:=\nabla_u\mathcal H_\sigma(y)-\nabla_u\mathcal H_0(y),
\qquad
K_0:=\nabla_u\mathcal H_0(y).
\]
Then
\[
\nabla_u\widehat P_T(y)-\nabla_uP_T(y)
=
DP[\mathcal H_\sigma(y)](\Delta K)
+
\big(DP[\mathcal H_\sigma(y)]-DP[\mathcal H_0(y)]\big)(K_0).
\]
Let $R_\sigma(z)=(\mathcal H_\sigma(y)-zI)^{-1}$ and
$R_0(z)=(\mathcal H_0(y)-zI)^{-1}$. By the above choice of $\Gamma$,
\[
\sup_{z\in\Gamma}\|R_\sigma(z)\|_{\mathrm{op}}
,\ 
\sup_{z\in\Gamma}\|R_0(z)\|_{\mathrm{op}}
\le C\sigma^2.
\]
Hence
\[
\|DP[\mathcal H_\sigma(y)](\Delta K)\|_{\mathrm{op}}
\le
C\operatorname{length}(\Gamma)
\sup_{z\in\Gamma}\|R_\sigma(z)\|_{\mathrm{op}}^2
\|\Delta K\|_{\mathrm{op}}
\le
C\sigma^2\|\Delta K\|_{\mathrm{op}}.
\]
Moreover, by the resolvent identity,
\[
R_\sigma(z)-R_0(z)
=
-R_\sigma(z)\Delta \mathcal HR_0(z),
\]
so that
\[
\sup_{z\in\Gamma}\|R_\sigma(z)-R_0(z)\|_{\mathrm{op}}
\le
C\sigma^4\|\Delta \mathcal H\|_{\mathrm{op}}.
\]
Therefore,
\begin{align*}
&\big\|
\big(DP[\mathcal H_\sigma(y)]-DP[\mathcal H_0(y)]\big)(K_0)
\big\|_{\mathrm{op}}\\
&\le
C\sup_{z\in\Gamma}(\|R_\sigma(z)\|+\|R_0(z)\|)\sup_{z\in\Gamma}\|R_\sigma(z)-R_0(z)\|_{\mathrm{op}}\|K_0\|_{\mathrm{op}}
\le
C\sigma^4\|\Delta \mathcal H\|_{\mathrm{op}}\|K_0\|_{\mathrm{op}}.
\end{align*}
We now estimate the three quantities above. First, by Theorem~\ref{th:noisy-hessian},
\[
\|\Delta \mathcal H\|_{\mathrm{op}}
=
\|\mathcal H_\sigma(y)-\mathcal H_0(y)\|_{\mathrm{op}}
=
O(1).
\]
Next,
\[
K_0
=
\nabla_u\mathcal H_0(y)
=
-\sigma^{-2}\Big(\nabla_uP_N(y)+\nabla_u \big((I_{T_xM}-A_y^{-1})P_T(y)\big)\Big),
\]
and since $\nabla P_T$, $\nabla P_N$, and $\nabla A_y^{-1}$ are uniformly bounded on $T(\tau-\varepsilon)$,
\[
\|K_0\|_{\mathrm{op}}\le C\sigma^{-2}.
\]
Finally, we claim that $\|\Delta K\|_{\mathrm{op}} \le C$. From the expansion in the proof of
Theorem~\ref{th:noisy-hessian},
\[
\mathcal H_\sigma(y)
=
-\sigma^{-2}P_N(y)
-\sigma^{-2}(I_{T_xM}-A_y^{-1})P_T(y)
-\frac12\nabla^2\log\det A_y
+\nabla^2R(y,\sigma).
\]
Thus
\[
\Delta \mathcal H 
=
-\frac12\nabla^2\log\det A_y
+\nabla^2R(y,\sigma).
\]
Differentiating in the tangential direction $u\in T_xM$, the above terms satisfy
\[
\|\nabla_u\nabla^2\log\det A_y\|_{\mathrm{op}} \le C,
\qquad
\|\nabla_u\nabla^2R(y,\sigma)\|_{\mathrm{op}} \le C\sigma,
\]
uniformly on $T(\tau-\varepsilon)$, by smoothness of $A_y$ and
Corollary~\ref{co:log-density}. Therefore,
\[
\|\Delta K\|_{\mathrm{op}}
=
\|\nabla_u\mathcal H_\sigma(y)-\nabla_u\mathcal H_0(y)\|_{\mathrm{op}}
\le C.
\]
Substituting the bounds for $\Delta \mathcal H$, $\Delta K$, and $K_0$ into the previous estimates, we obtain
\[
\|\nabla_u\widehat P_T(y)-\nabla_uP_T(y)\|_{\mathrm{op}}
\le C\sigma^2.
\]
Combining this with the estimate for the second term yields
\[
\|\widehat\Pi_y-\Pi_x\|_{\mathrm{op}}
\le
C(\|v_y\|+\sigma^2).
\]
Since $x=\pi(y)$, this proves the claim.
\end{proof}

\subsection{Proofs for Section~\ref{sec:metric-change}}\label{sec:pf-metric}
Here we prove the results in Section~\ref{sec:metric-change}.
\subsubsection{Proof of Theorem~\ref{th:metric-totally-umbilical-submanifold}}\label{sec:pf-metric totally umbilical submanifold}
\begin{proof}
Write $P:=P_\sigma$. Consider the conformal deformation
\[
g_P = e^{2f(P)}g_E,
\]
where $f:\mathbb{R}_+\to\mathbb{R}$ is to be determined. We write $\Pi^{E}$ and $\Pi^{P}$ for the second fundamental forms of $M$ with respect to $g_E$ and $g_P$, respectively, and $H$ for the mean curvature vector with respect to $g_E$.

It is standard that under a conformal change, the second fundamental form transforms as
\begin{equation}\label{eq:conformal-Pi}
\Pi^P_x(u,v)
=
\Pi^E_x(u,v)-g_E(u,v)\big(\nabla f(P(x))\big)^\perp,\qquad u,v\in T_xM,
\end{equation}
where $\nabla$ and $(\cdot)^\perp$ are taken with respect to the ambient Euclidean structure.
Since $M$ is totally umbilical, there is a mean curvature vector $H_x\in T_x^\perp M$ such that
\[
\Pi^E_x(u,v) = g_E(u,v)H_x.
\]
Substituting the umbilical form into \eqref{eq:conformal-Pi} gives
\[
\Pi^P_x(u,v)
=
g_E(u,v)\big(H_x - (\nabla f(P(x)))^\perp\big).
\]
The inclusion $(M,g)\hookrightarrow(\mathbb{R}^D,g_P)$ is totally geodesic if and only if $\Pi^P_x\equiv0$ for all $x\in M$, thus
\[
H_x = (\nabla f(P(x)))^\perp,\qquad x\in M.
\]
Since $f$ depends on $x$ only through $P(x)$, the chain rule yields
\[
\nabla f(P(x)) = f'(P(x))\nabla P(x) = f'(P(x))P(x)\nabla\log P(x).
\]
By Theorem~\ref{th:gradient-noisy}, at points $x\in M$ the estimator satisfies
\[
\nabla \log P(x)=\frac{d}{2}H_x + O(\sigma).
\]
Therefore, we obtain the scalar relation
\[
H_x
=
f'(P(x))P(x)\frac{d}{2}H_x + O(\sigma).
\]
If $H_x=0$, the leading term matching condition is automatic. Otherwise, matching the leading term requires
\[
\frac{d}{2}f'(P(x))P(x) = 1.
\]
Thus we choose 
\[
f(P)=\frac{2}{d}\log P + C.
\]
Taking $C=0$ yields
\[
f(P) = \frac{2}{d}\log P,
\qquad
g_P = e^{2f(P)}g_E = P^{4/d}g_E.
\]
With this choice,
\[
H_x-(\nabla f(P(x)))^\perp=O(\sigma),
\]
and hence
\[
\Pi^P_x(u,v)=O(\sigma)g_E(u,v).
\]
Therefore, for every $x\in M$,
\[
\|\Pi_x^{P}\|_{\mathrm{op}}\le C\sigma,
\]
and hence the inclusion $(M,g)\hookrightarrow (\mathbb{R}^D,g_P)$ is asymptotically totally geodesic as $\sigma\to 0$.
\end{proof}

\subsubsection{Proof of Theorem~\ref{th:metric-hypersurface}}\label{sec:pf-metric hypersurface}
\begin{proof}
Write $P:=P_\sigma$. Recall that
\[
g_P=d(\log P)\otimes d(\log P),
\qquad
(g_P)_{ij}=\partial_i\log P\partial_j\log P.
\]
Since $g_P$ is rank-one, it is degenerate and does not define a genuine Riemannian metric. We therefore do not invoke the Levi--Civita connection. Instead, using the Moore--Penrose pseudo-inverse
\[
(g_P)_{\mathrm{MP}}^{ij}
=
\frac{1}{\|\nabla\log P\|^4}
\partial_i\log P\partial_j\log P,
\]
we define the associated Christoffel-type coefficient field
\[
\widetilde \Gamma^m_{ij}
:=
(g_P)_{\mathrm{MP}}^{mk}
\widetilde \Gamma_{ij,k},
\qquad
\widetilde \Gamma_{ij,k}
:=
\frac12
\big(
\partial_j(g_P)_{ik}
+\partial_i(g_P)_{jk}
-\partial_k(g_P)_{ij}
\big).
\]
A direct computation gives
\begin{align*}
\widetilde \Gamma_{ij,k}
&=
\frac{1}{2}\big(
\partial_j\partial_i\log P\partial_k\log P
+\partial_i\log P\partial_j\partial_k\log P
+\partial_i\partial_j\log P\partial_k\log P
\\
&\hspace{1cm}
+\partial_j\log P\partial_i\partial_k\log P
-\partial_k\partial_i\log P\partial_j\log P
-\partial_i\log P\partial_k\partial_j\log P
\big)
\\
&=
(\partial_i\partial_j\log P)\partial_k\log P,
\end{align*}
and hence
\begin{align*}
\widetilde\Gamma^m_{ij}
&=
\frac{1}{\|\nabla\log P\|^4}
\partial_m\log P\partial_k\log P
(\partial_i\partial_j\log P)\partial_k\log P
\\&=
\frac{1}{\|\nabla\log P\|^2}
(\partial_i\partial_j\log P)\partial_m\log P.
\end{align*}
Accordingly, the induced acceleration equation of $\ddot\gamma^k(t)=- \widetilde \Gamma^k_{ij} \dot\gamma^i(t)\dot\gamma^j(t)$ is
\begin{equation}\label{eq:formal-autoparallel}
\ddot\gamma(t)
=
-\frac{1}{\|\nabla\log P(\gamma(t))\|^2}
\nabla^2\log P(\gamma(t))(\dot\gamma(t),\dot\gamma(t))
\nabla\log P(\gamma(t)).
\end{equation}
Now let $\gamma$ be a geodesic on $(M,g)$, viewed as a curve in $\mathbb{R}^D$. Its Euclidean acceleration is
\[
\ddot\gamma(t)=\Pi_{\gamma(t)}(\dot\gamma(t),\dot\gamma(t)).
\]
By Theorem~\ref{th:hypersurface}, in the hypersurface case,
\[
\Pi_{\gamma(t)}(\dot\gamma(t),\dot\gamma(t))
=
-\frac{1}{\|\nabla\log P(\gamma(t))\|^2}
\nabla^2\log P(\gamma(t))(\dot\gamma(t),\dot\gamma(t))
\nabla\log P(\gamma(t))
+O(\sigma),
\]
uniformly along $M$. Comparing with \eqref{eq:formal-autoparallel}, we conclude that $\gamma$ satisfies the induced acceleration equation up to an $O(\sigma)$ error. This proves the claim.
\end{proof}

\section*{Acknowledgments}J.C. is partly supported by the Shanghai Institute for Mathematics and Interdisciplinary Sciences (SIMIS). R.L. is a research fellow supported by the Singapore Ministry of Education Tier 2 grant A-8001562-00-00 at the National University of Singapore. Z.Y. has been supported by the Singapore Ministry of Education Tier 2 grant A-8001562-00-00 and the Tier 1 grant (A-8004146-00-00 and A-8002931-00-00) at the National University of Singapore.

\appendix

\section{Proof of Lemmas in Section~\ref{sec:prelim}}
\label{app:lemma-sec2}

\subsection{Proof of Lemma~\ref{lem:exponential-map-expansion}}\label{sec:pf-exponential-map-expansion}
\begin{proof}
     Let $x\in M$ and $\gamma_u\colon [0,1] \to M$ be the  geodesic in $M$ with initial condition $\gamma_u(0) = x$, $\gamma_u'(0) = u$. Then $\exp_{x}(u) = \gamma_u(1)$. Consider the Taylor expansion of $\exp_x$ around the origin of $T_xM$, it will be written as
    \[
    \exp_x(u) = x + I_x(u) + \frac{1}{2} Q_x(u) + \dots,
    \]
where $I_x$, $Q_x$ are respectively linear and bilinear map in $T_xM$. 
Therefore 
\[
\gamma_u(t) = x + I_x(u)t + \frac{1}{2} Q_x(u)t^2 +O(t^3). 
\]
Hence, $I_x(u) = \gamma_u'(0) = u$ and $ Q_x(u) = \gamma_u''(0)$. Since $\gamma$ is a geodesic, we have $\nabla^M_{\gamma_u'(0)} \gamma_u'(0) = 0$, which implies $\gamma_u''(0) = \Pi_x(u,u)$. Equivalently, 
\[
\exp_x(u) = x + u + \frac{1}{2}\Pi_x(u,u) + O(\|u\|^3).
\]
\end{proof}

\subsection{Proof of Lemma~\ref{lem:volume-element-expansion}}\label{sec:pf-volume element expansion}
\begin{proof}
Let $x\in M$, and work in exponential coordinates centered at $x$.
Since $g_{ij} | _x  = \delta_{ij}$, $D\exp_x$ is the identity map at the origin of $T_xM$, which implies $\frac{\partial}{\partial{u_i}} = D(\exp_x)_0(u_i) = u_i.$ The geodesic parametrization in the exponential coordinates is given by 
\[
c(t) = t(u^1,u^2,\dots,u^d).
\]
Putting this into the geodesic equation, we get for any $k$, 
\[
0 = \ddot{c^k(t)}  + \dot{c^i(t)}\dot{c^j(t)}\Gamma_{ij}^k(\exp (c(t))) = u^iu^j\Gamma_{ij}^k(\exp (c(t))).
\]
Let $t=0$, we have the quadratic form
\[
u^iu^j\Gamma_{ij}^k(x) = 0
\]
for any $u^iu^j$ and symmetric $\Gamma_{ij}^k$. Therefore $\Gamma^k_{ij}(0) = 0$ for all $i,j,k$, which implies $\partial_i g_{jk} + \partial_j g_{ik} - \partial_k g_{ij} = 0 $ at the origin. So $\partial_k g_{ij}|_x = 0$ follows from 
\[
\partial_k g_{ij} = \frac{1}{2}(\partial_k g_{ij} + \partial_j g_{ik} - \partial_i g_{jk}+ \partial_k g_{ij} + \partial_i g_{jk} - \partial_j g_{ik}) = 0.
\]
Hence $\sqrt{\operatorname{det} g_{ij}(x)} = 1$ and its first derivative vanish. Taking the Taylor expansion at $x$ we get 
\[
\sqrt{\operatorname{det} g_{ij}(x,u)} = 1 + O(\|u\|^2).
\]
Since $M$ is compact and the metric is smooth, the second derivatives of $g_{ij}$ in these coordinates are uniformly bounded in $x\in M$, so the $O(\|u\|^2)$ term is uniform in $x$.
\end{proof}

\section{Proof of Lemmas in Section~\ref{sec:density-and-derivatives}}
\label{app:lemma-sec3}
\subsection{Proof of Lemma~\ref{lem:derivative-of-v-pi}}\label{sec:pf-derivative of v pi}
\begin{proof}
Fix $y \in \mathcal T(\tau)$ and write $x := \pi(y) \in M$ and $v := v_y$. Let $S_v \colon T_xM \to T_xM$ be the shape operator associated with the normal vector $v$, so that 
\[
\langle S_v (u), w \rangle = \langle \Pi_x(u,w), v \rangle, \qquad S_v(u) = -(\nabla_u v)^\top,
\]
for $u,w\in T_xM$. Then we have $A_y = I_{T_xM} - \langle v_y,\Pi_{x}\rangle = I_{T_xM} - S_v$ on $T_xM$.

Let $\omega \in \mathbb{R}^D$ be arbitrary, and choose a smooth curve $\gamma \colon (-\delta,\delta) \to \mathcal T(\tau)$ such that $\gamma(0) = y$ and $\gamma'(0) = \omega$. Set
\[
\pi(t) := \pi(\gamma(t)) \in M, \qquad v(t) := v_{\gamma(t)} = \gamma(t) - \pi(t)
\in T_{\pi(t)}^\perp M.
\]
Then for any tangent vector field $Y(t) \in T_{\pi(t)}M$ along $\pi(t)$ we have
\[
\langle v(t), Y(t) \rangle = 0, \qquad \forall t.
\]
Differentiating at $t=0$ gives
\begin{align*}
0 
= \frac{d}{dt} \langle v(t), Y(t) \rangle \Big|_{t=0} 
= \langle v'(0), Y(0) \rangle
   + \langle v(0), \nabla_{\pi'(0)} Y \rangle.
\end{align*}
Here $v'(0) = \nabla_\omega v$ and $\pi'(0) = \nabla_\omega \pi(y) \in T_xM$.
Using the definition of the shape operator, we have
\[
\langle v(0), \nabla_{\pi'(0)} Y \rangle
= \langle S_v(\pi'(0)), Y(0) \rangle.
\]
Thus
\[
\langle v'(0), Y(0) \rangle = \langle (\nabla_\omega v_y)^\top, Y(0) \rangle
= - \langle S_v(\nabla_\omega \pi(y)), Y(0) \rangle
\]
for all $Y(0) \in T_xM$, and therefore
\begin{equation}\label{eq:lemma31-tangent}
(\nabla_\omega v_y)^\top = - S_v(\nabla_\omega \pi(y)).
\end{equation}
On the other hand, since $v_y = y - \pi(y)$, we also have
\begin{equation}\label{eq:lemma31-basic}
\nabla_\omega v_y = \omega - \nabla_\omega \pi(y).
\end{equation}
Taking the tangential component in \eqref{eq:lemma31-basic} yields
\[
(\nabla_\omega v_y)^\top = \omega^\top - \nabla_\omega \pi(y),
\]
because $\nabla_\omega \pi(y) \in T_xM$. Combining this with \eqref{eq:lemma31-tangent} we obtain
\[
\omega^\top - \nabla_\omega \pi(y) = - S_v(\nabla_\omega \pi(y)),
\]
or equivalently
\[
(I_{T_{\pi(y)}M} - S_v)\nabla_\omega \pi(y) = \omega^\top.
\]
By Lemma~\ref{lem:second-fundamental-form-bound} and the reach assumption, we have $\|S_v\|_{\mathrm{op}} \le \| \Pi_x\|_{\mathrm{op}} \|v\| \le \tau^{-1}\|v\| < 1$ for $y \in \mathcal T(\tau)$, hence $A_y = I_{T_xM}- S_v$ is invertible on $T_xM$. Therefore
\[
\nabla_\omega \pi(y) = A_y^{-1} \omega^\top.
\]
Substituting this into \eqref{eq:lemma31-basic} gives
\[
\nabla_\omega v_y = \omega - A_y^{-1}\omega^\top.
\]
\end{proof}

\section{Proof of Lemmas in Section~\ref{sec:compute-geometry}}\label{app:lemma-sec4}

\subsection{Proof of Lemma~\ref{lem:extended-normal-vector-field}}\label{sec:pf-extended normal vector field}
\begin{proof}
Write $x:=\pi(y)$. 
Choose a smooth curve $\gamma(t)\subset M$ such that $\gamma(0)=x$ and $\dot\gamma(0)=u$. By Lemma~\ref{lem:derivative-of-v-pi}, there exists a smooth curve $y(t)\subset \mathcal T(\tau)$ such that
\[
y(0)=y,\qquad \pi(y(t))=\gamma(t),\qquad \dot y(0)=A_yu.
\]
Let $Y(t)$ be a tangent vector field along $\gamma(t)$ such that $Y(t)\in T_{\gamma(t)}M$ and $Y(0)=v$. Then, by the definition of the second fundamental form we have $\Pi_x(u,v)=(\nabla_u Y(0))^\perp$.
%covariant derivative is metric
Since $\mathcal N(y(t))$ is normal to $M$ at $\gamma(t)=\pi(y(t))$, we have
\[
\langle \mathcal N(y(t)), Y(t)\rangle =0
\qquad\text{for all }t.
\]
Differentiating at $t=0$ gives
\[
0
=
\frac{d}{dt}\Big|_{t=0}\langle \mathcal N(y(t)),Y(t)\rangle
=
\big\langle \nabla_{A_yu}\mathcal N(y),v\big\rangle
+
\big\langle \mathcal N(y),\nabla_uY(0)\big\rangle.
\]
Since $\mathcal N(y)$ is normal and $Y(t)$ is tangent along $\gamma(t)$, only the normal component of $\nabla_uY(0)$ contributes, so
\[
\big\langle \mathcal N(y),\nabla_uY(0)\big\rangle
=
\big\langle \mathcal N(y),\Pi_x(u,v)\big\rangle.
\]
Hence
\[
\big\langle \nabla_{A_yu}\mathcal N(y),v\big\rangle
=
-
\big\langle \mathcal N(y),\Pi_x(u,v)\big\rangle.
\]
Since $T_x^\perp M$ is one-dimensional and spanned by $\mathcal N(y)$, we must have
\[
\Pi_x(u,v)
=
-\frac{1}{\|\mathcal N(y)\|^2}
\big\langle \nabla_{A_yu}\mathcal N(y),v\big\rangle\mathcal N(y).
\]
This proves the claim.
\end{proof}

\subsection{Proof of Lemma~\ref{lem:second-fundamental-form-general}}\label{sec:pf-second fundamental form_general}
\begin{proof}
Let $x\in M$, $u,v\in T_xM$ and $V$ be a $C^1$ vector field defined in a neighborhood of $x$ in $\mathcal T(\tau)$ such that $V(x)=v$ and $V$ is tangent to $M$ along $M$, i.e.,\ $V(z)\in T_zM$ for all $z\in M$. Then the vector field
\[
W(z) := P_T(z)V(z) - V(z)
\]
vanishes identically on $M$. In particular, $W(x)=0$. Differentiating along $u$ at $x$ and using the Levi--Civita connection in $\mathbb{R}^D$, we obtain
\[
0
=
\nabla_u W(x)
=
\nabla_u P_T(x)v
+
P_T(x)\nabla_u V(x)
-
\nabla_u V(x).
\]
Projecting this identity onto the normal space $T_x^\perp M$ gives
\[
(\nabla_u V(x))^\perp
=
(I_D-P_T(x))\nabla_u V(x)
=
(I_D-P_T(x))\nabla_uP_T(x)v,
\]
since $(I_D-P_T)P_T=0$. Next, differentiate the identity $P_T^2=P_T$ at $x$ in the direction $u$:
\[
\nabla_uP_T\cdot P_T + P_T\cdot\nabla_uP_T = \nabla_uP_T.
\]
Multiplying on the left by $P_T$ and using $P_T^2=P_T$ yields
\[
P_T\nabla_uP_TP_T = 0.
\]
For $v\in T_xM$ we have $v=P_Tv$, and hence
\[
P_T\nabla_uP_Tv = P_T\nabla_uP_TP_Tv = 0.
\]
Therefore $\nabla_uP_Tv$ is purely normal and
\[
(\nabla_u V(x))^\perp = \nabla_uP_T(x)v.
\]
Recalling that $\Pi_x(u,v)=(\nabla_u V(x))^\perp$ by definition, we conclude
\[
\Pi_{x}(u,v)
=
\nabla_uP_T(x)v.
\]
\end{proof}

\vskip 0.2in

\bibliographystyle{plainnat}
\bibliography{main}  

\end{document}